\documentclass[12pt]{amsart}

\usepackage[utf8]{inputenc}
\usepackage[T1]{fontenc}

\usepackage[backref=page,
  colorlinks=true,
  linkcolor=magenta,
  citecolor=cyan,
  hyperindex
]{hyperref}
\usepackage[alphabetic,lite,backrefs]{amsrefs}

\usepackage[table,dvipsnames]{xcolor}
\usepackage{amsmath, amsfonts, amsthm, amssymb}
\usepackage[top=1in, bottom=1in, left=1in, right=1in]{geometry}
\usepackage{tikz}
\usetikzlibrary{decorations.pathreplacing}
\usepackage{microtype}
\usepackage{changepage}
\usepackage{mathrsfs}
\usepackage{tikz-cd}
\usepackage{mathtools} % just for defining the symbol :=
\usepackage{bm} %Need this for bold math
\usepackage{comment} %Comments out large portions of text
\usepackage{float}
\usepackage{appendix}
\usepackage{enumitem} %% Need this to label individual items in enumerate
\usepackage[colorinlistoftodos,prependcaption,textsize=scriptsize, obeyFinal]{todonotes}

\makeatletter
\providecommand\@dotsep{5}
\renewcommand{\listoftodos}[1][\@todonotes@todolistname]{%
  \@starttoc{tdo}{#1}}
\makeatother

\theoremstyle{plain}
	\newtheorem{theorem}{Theorem}[section]
	\newtheorem{lemma}[theorem]{Lemma}
    \newtheorem{corollary}[theorem]{Corollary}
    \newtheorem{proposition}[theorem]{Proposition}

    \newtheorem{theoremalpha}{Theorem}
    
\theoremstyle{definition}
    \newtheorem{defn}[theorem]{Definition}
    \newtheorem*{defn*}{Definition}
    \newtheorem{example}{Example}[section]
    \newtheorem*{example*}{Example}
    
\theoremstyle{remark}
	\newtheorem{remark}[theorem]{Remark}
	\newtheorem*{remark*}{Remark}

\usepackage{mathtools}

\DeclarePairedDelimiterX\Set[1]\{\}{#1}

\AtBeginEnvironment{example}{%
  \pushQED{\qed}%
}
\AtEndEnvironment{example}{\popQED\endexample}
\AtBeginEnvironment{example*}{%
  \pushQED{\qed}%
}
\AtEndEnvironment{example*}{\popQED\endexample}

\newcommand{\mf}[1]{\mathfrak{#1}}

\newcommand{\mc}[1]{\mathcal{#1}}

\def\m{{\mathfrak{m}}}

\def\Z{{\mathbb{Z}}}

\def\N{{\mathbb{N}}}
\def\PP{{\mathbb{P}}}

\def\surj{\twoheadrightarrow}

\def\OO{{\mathcal O}}
\DeclareMathOperator{\Proj}{Proj}

\DeclareMathOperator{\rank}{rank}
\DeclareMathOperator{\reg}{reg}

\DeclareMathOperator{\Sym}{Sym}

\DeclareMathOperator{\Tor}{Tor}

\DeclareMathOperator{\ord}{ord}
\DeclareMathOperator{\gr}{gr}

\DeclareMathOperator{\Ap}{Ap}

\title{Weighted Syzygies of Pointed Curves}
\subjclass{13D02, 14M25}

\author{Maya Banks}
\address{Department of Mathematics, Statistics, and Computer Science, University of Illinois Chicago, Chicago, IL}
\email{mayadb@uic.edu}
\urladdr{\href{https://sites.google.com/view/mayabanks}{https://sites.google.com/view/mayabanks}}

\author{John Cobb}
\address{Department of Mathematics and Statistics, Auburn University, Auburn, AL}
\email{jdcobb3@gmail.com}
\urladdr{\href{https://johndcobb.github.io}{https://johndcobb.github.io}}

\author{Mahrud Sayrafi}
\address{Department of Mathematics and Statistics, McMaster University, Hamilton, ON}
\email{mahrud@mcmaster.ca}
\urladdr{\href{https://mahrud.github.io}{https://mahrud.github.io}}

\newcommand{\ww}{\mathbf w}
\newcommand{\vv}{\mathbf v}

\newcommand{\bd}{\cdot}
\newcommand{\sbd}{\cellcolor{red!15}\bd}
\newcommand{\BettiTable}[6]{%
  \begin{minipage}[t]{#1}
    \vspace{0pt}% anchor every table at the physical top of its minipage
    \quad{\tiny
      $\begin{aligned}
        d &= #2 \\[-0.5em]
        \ww &= (#3)\\[-0.5em]
        \operatorname{ord} &= (#4)
      \end{aligned}$\par\vspace{0.2em}}
         {\tiny
           \setlength{\arraycolsep}{1.8pt}%
           \renewcommand{\arraystretch}{0.82}%
           \(
           \begin{array}{r|#5}
             #6
           \end{array}
           \)
         }
  \end{minipage}%
}

\begin{document}
\begin{abstract}
  For a point $P$ on a smooth projective curve $C$ of genus $g$, the section ring $R_d = R(C,\OO_C(dP))$ can be minimally presented as a quotient $S_d/I_d$ where $S_d$ is a $\Z$-graded polynomial ring. Motivated by Green's $N_p$ properties for projective embeddings, we investigate the syzygies of $R_d$ over $S_d$ in low degrees $d$ when $R_d$ is \emph{not} generated in degree 1.
  We bound the degrees of the generators of $R_d$ and prove uniform column-by-column bounds on the support of the Betti table of $R_d$ over $S_d$. We compute the weighted regularity of $R_d$ and show that if $d$ is larger than the Frobenius number of $P$ then $R_d$ satisfies the weighted $N_{g-1-\binom{d-g}{2}}$ condition. 
  Finally, we give sufficient criteria for the Betti numbers to be determined explicitly and show that for ordinary points, the resolution of $R_{g+1}$ is pure.
\end{abstract}

%%%%%%%%%%%%%%%%%%%%%%%%%%%%%%%%%%%%%%%%%%%%%%%%%%%%%%%%%%%%%%%%%%%%%%%%%%%%%%%%

\vspace*{-0.3in}
\maketitle
\vspace{-0.25in}

\section{Introduction}

Let $C$ be a smooth projective curve of genus $g$ over an algebraically closed field $k$.\linebreak The section ring of a line bundle $L$ on $C$ is the graded ring
\begin{equation*}
    R(C,L) = \bigoplus_{m\geq 0} H^0(C,L^{\otimes m}).
\end{equation*}
When $L$ is sufficiently positive, $R(C,L)$ is generated in degree 1, and a complete linear series on $L$ induces a closed embedding of $C$ into $\PP H^0(C,L)^\vee$. An overarching principle is that the geometry of $L$ is reflected in the syzygies of the coordinate ring of $C$ under this embedding; this is borne out by a number of classical results on the ``geometry of syzygies'', highlighted by Green's theorem on linear syzygies which says that once $L$ has degree $2g+p+1$, the first $p$ syzygies of $C$ embedded by $L$ are linear. In general, as $L$ gets more positive, the syzygies of the corresponding embedding become simpler; e.g. see \cites{green1984koszul,lazarsfeld2004positivity1}.

\begin{tikzpicture}[>=stealth, font=\small]

    % Number line
    \draw[->, thick] (0,0) -- (13,0);

    % Tick marks
    \foreach \x in {0,1.2,5.8,7,10}
        \draw (\x,0.1) -- (\x,-0.1);

    % Left-hand label
    \node[below] at (-1,-0.32) {$\deg L =$};

    % Labels below the line
    \node[below] at (0,-.35) {$0$};
    \node[below] at (1.2, -.35) {$1$};
    \node[below] at (3.5,-.35) {$\cdots$};
    \node[below] at (5.8,-.35) {$2g+1$};
    \node[below] at (7,-.35) {$2g+2$};
    \node[below] at (8.5,-.35) {$\cdots$};
    \node[below] at (10,-.35) {$2g+p+1$};
    \node[below] at (11.5,-.35) {$\cdots$};

    % Angled text above the points
    %\node[above=6pt, rotate=30] at (0,0) {text};
    \node[above=6pt, rotate=30, anchor=north west] at (.9,.3) {ample};
    \node[above=6pt, rotate=30, anchor=north west] at (5.5,.3) {very ample};
    \node[above=6pt, rotate=30, anchor=north west, align=right] at (6.7,.3) {generated by\\quadrics};
    \node[above=6pt, rotate=30, anchor=north west] at (9.7,.3) {$N_p$};

    % Brace labeling the strictly ample range
    \draw[decorate, decoration={brace, amplitude=8pt, raise=4pt}] (1.2,.8) -- (5.5,.8)
        node[midway, above=10pt] {\emph{strictly ample}};

\end{tikzpicture}

Our goal is to understand the connection between positivity and syzygies on the left-hand side of the picture above, in the degree regime before $L$ is guaranteed to be very ample.
When $L$ is \emph{strictly ample}, that is, ample but not very ample, the section ring is not generated in degree 1. By picking a \emph{weighted series} $\{s_0, \ldots, s_r\}$ corresponding to a minimal algebra generating set for $R(C,L)$, we obtain a
closed embedding of $C=\Proj R(C,L)$ in the weighted projective space $\PP(w_0,\dots, w_r)$, where $s_i \in H^0(C, L^{\otimes w_i})$.
For the resulting family of embeddings obtained from the line bundles $\{L^{\otimes d}\}_{d\geq 1}$, we ask the following questions:

\begin{enumerate}
    \item What are the weights of the ambient $\PP(w_0,\dots,w_r)$? In other words, into which weighted projective space is the curve minimally embedded by $L^{\otimes d}$?
    \item What are the degrees of the generators of the defining ideal of $C\subset \PP(w_0,\dots,w_r)$? More generally, which are the nonzero entries of the Betti table of $C\subset\PP(w_0,\dots,w_r)$?
    \item How does the ``complexity'' of the syzygies of $C\subset\PP(w_0,\dots,w_r)$ change as $d$ grows?
\end{enumerate}
While these questions are well-studied when $d$ is sufficiently large, relatively little is known in the strictly ample range.

Weighted syzygies are often more delicate than their standard-graded counterparts: there are several different notions of linearity and regularity, and basic geometric questions for subvarieties of weighted projective spaces depend subtly on the numerics of the ambient weights. Recent work has developed several approaches to syzygies in the nonstandard and multigraded settings \cites{cobb2024syzygies, bruce2110characterizing, chardin2020multigraded, eisenbud2015tate, berkesch2017virtual,banks2026varieties}.
Brown and Erman developed weighted analogues of the $N_p$ conditions and Green's Linear Syzygy Theorem for curves embedded via certain weighted series. Their embeddings are constructed by taking an incomplete linear series on a very ample line bundle and completing using sections of a higher power of the line bundle \cite{brown2025linear}.
Our setting is different in that we explore the case of a strictly ample line bundle and the weighted embedding that arises naturally via the nonstandard grading of the section ring.

We address all three of the above questions for strictly ample line bundles of the form $L = \OO_C(dP)$ where $P$ is a point.
Fix a point $P\in C$ and an integer $d\geq 1$, and set
\begin{equation*}
    R_d \coloneqq R(C,\OO_C(dP)) = \bigoplus_{m\geq 0}H^0(C,\OO_C(mdP)).
\end{equation*}
For line bundles of this form, we are able to leverage additional combinatorial data associated to the point in order to study the section ring $R_d$ and its syzygies. That combinatorial data comes from a family of affine semigroups based off of the \emph{Weierstrass semigroup} of $P$, defined to be
\begin{equation*}
    H(P) = \{ - \ord_P(f) \mid f \in k(C)^\times \text{ is regular away from } P\}.
\end{equation*}
Weierstrass semigroups are classical invariants in the study of moduli of pointed curves, encoding rich geometric data. We use the Weierstrass semigroup to bound the behavior of the section rings $R_d$. Let $F(P)$, the \emph{Frobenius number} of $P$, be the maximum positive integer in $\Z_{\geq 0} \setminus H(P)$, and set
\[
q_d = \min\{q\geq 1 \mid qd \in H(P)\}.
\]
As a partial answer to Question (a) above, we obtain the following.

\begin{theoremalpha}[Corollary \ref{cor:minimal-generator-bounds}]\label{thm: intro-varDegrees}
Let $C$ be a smooth curve of genus $g\geq 1$ and $P$ a point of $C$. Let $C\hookrightarrow \PP(\ww)$ be the minimal weighted embedding induced by $\OO_C(dP)$. Then $\PP(\ww)$ has dimension $\leq dq_d$ and for every $0\leq i\leq \dim\PP(\ww)$,
    \[
    w_i\leq q_d + \left\lceil\frac{F(P)}{d}\right\rceil.
    \]
    Furthermore, when $d>F(P)$, $\PP(\ww)$ has the form $\PP(1^a,2^b)$ where $a = d+1-g$.
    % $\PP(\underbrace{1,\dots,1}_{a\text{ times}}, \underbrace{2,\dots,2}_{b\text{ times}})$
\end{theoremalpha}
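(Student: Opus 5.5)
The plan is to degenerate $R_d$ to an affine semigroup ring and to bound the minimal generators of that semigroup by hand.

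\textbf{Reduction to a semigroup.} Set
\[
S_d=\{(m,h)\in \Z_{\geq 0}^2 \mid h\in H(P),\ h\leq md\}.
\]
A function $f\in H^0(C,\OO_C(mdP))$ is regular away from $P$, and its pole order $-\ord_P(f)$ lies in $H(P)$ and is at most $md$. Conversely, every $h\in H(P)$ with $h\le md$ is realized by some such $f$. So we may choose a basis of $(R_d)_m$ consisting of functions $f_{m,h}$ with pole orders exactly the $h$ for which $(m,h)\in S_d$. Pole orders add under multiplication. Hence the product $f_{m,h}f_{m',h'}$ equals $f_{m+m',h+h'}$ plus a combination of basis elements of strictly smaller pole order.

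This has the following consequence. Suppose a set $G\subseteq S_d$ generates $S_d$ as a monoid, and we pick one basis element $f_{m,h}$ for each $(m,h)\in G$. Then these elements generate $R_d$ as a $k$-algebra. This follows by induction on $(m,h)$, ordered first by $m$ and then by pole order $h$. Write $(m,h)$ as a sum of elements of $G$ and take the corresponding product of the chosen $f$'s. It agrees with $f_{m,h}$ up to terms in the same degree $m$ with smaller pole order, and those are handled by the induction hypothesis. So the number of minimal algebra generators of $R_d$, and their degrees, are bounded by those of any generating set of $S_d$. Both bounds in the theorem then follow from exhibiting a suitable generating set of $S_d$.

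\textbf{Generators of $S_d$.} Put $N=q_dd\in H(P)$. The elements $u=(1,0)$ and $v=(q_d,N)$ both lie in $S_d$. For each nonzero residue class $r$ modulo $N$, let $h_r$ be the smallest element of $H(P)$ congruent to $r$ modulo $N$, and set $g_r=(\lceil h_r/d\rceil,h_r)\in S_d$. Also set $g_0=(0,0)$.

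We claim that every $(m,h)\in S_d$ can be written as $g_r+kv+ju$ with $j,k\ge 0$. Take $r\equiv h$ modulo $N$. Because $h_r$ is minimal in its class, $h=h_r+kN$ for some $k\geq 0$. Then
\[
m\geq \lceil h/d\rceil=\lceil h_r/d\rceil+kq_d,
\]
so the remaining part $ju$ has $j\ge 0$. Thus $S_d$ is generated by $u$, $v$ and the $N-1$ elements $g_r$ with $r\neq 0$. That is at most $q_dd+1$ generators, so $\dim \PP(\ww)\leq dq_d$.

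\textbf{Degree bound.} Every integer greater than $F(P)$ lies in $H(P)$. Among the $N$ consecutive integers $F(P)+1,\dots,F(P)+N$, each residue class modulo $N$ appears once, and all of them lie in $H(P)$. Hence $h_r\leq F(P)+N$, which gives
\[
\deg g_r=\lceil h_r/d\rceil\leq q_d+\lceil F(P)/d\rceil.
\]
The generators $u$ and $v$ have degrees $1$ and $q_d$, which also satisfy this bound. So every weight satisfies $w_i\leq q_d+\lceil F(P)/d\rceil$.

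\textbf{The case $d>F(P)$.} Here $d\in H(P)$, so $q_d=1$. The bound becomes $w_i\leq 1+1=2$, so $\PP(\ww)=\PP(1^a,2^b)$. A minimal generating set must include a basis of $(R_d)_1=H^0(C,\OO_C(dP))$, so $a=h^0(\OO_C(dP))$. Since $F(P)\leq 2g-1$, we have $d\geq 2g>2g-2$, and Riemann--Roch gives $a=d+1-g$.

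\textbf{Main obstacle.} The only delicate step is the lifting from $S_d$ to $R_d$ in the first paragraph. The induction has to be carried out within a fixed degree $m$, descending by pole order. The point to check is that the lower-order correction terms stay in degree $m$ and only involve pairs $(m,h')$ that actually lie in $S_d$, so the induction hypothesis applies to them. The semigroup counting itself is elementary.
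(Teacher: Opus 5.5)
\textbf{The gap is in the last step.} From $d>F(P)$ and $F(P)\le 2g-1$ you cannot conclude $d\ge 2g$; that would need $F(P)\ge 2g-1$. The case the theorem is about is exactly $F(P)<d\le 2g-2$, where the degree alone does not make $\OO_C(dP)$ nonspecial. For example, an ordinary point on a genus $4$ curve has $F(P)=4$, and $d=5<2g-2=6$. So the form of Riemann--Roch you invoke (degree $>2g-2$ gives $h^0=\deg+1-g$) is not available.

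The conclusion $a=d+1-g$ is still true, and your own first paragraph supplies the fix. The space $H^0(C,\OO_C(dP))$ has a basis indexed by the pole orders $h\in H(P)$ with $h\le d$. Hence
\[
h^0(C,\OO_C(dP))=(d+1)-\#\bigl(G(P)\cap[0,d]\bigr)=d+1-g,
\]
because every gap is at most $F(P)<d$. Equivalently, canonical sections vanish at $P$ to order $\ell-1\le F(P)-1<d$ for $\ell\in G(P)$, so $h^0(K_C-dP)=0$, and Riemann--Roch then gives $d+1-g$.

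\textbf{Everything else is correct, and it takes a different route from the paper.} The paper first reduces modulo the regular sequence $x,z$. This needs Cohen--Macaulayness of $R_d$ and the fact that $x,z$ can be taken among the minimal generators. It then identifies $\gr_F(A_d)$ with $k[\Gamma_d(P)]/(\chi^\alpha,\chi^\beta)$ by comparing Hilbert series. The bounds are read off from $s=\dim\mf a/\mf a^2\le\dim\mf a=e_d-1$ and from the top degree of $A_d$. Part (c) comes from $H_{A_d}(t)=1+(d-g-1)t+gt^2$.

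You instead lift monoid generators of $\Gamma_d(P)$ to algebra generators of $R_d$ by a leading-term induction, in the style of a Khovanskii/SAGBI basis. Your generators $u,v,g_r$ are exactly the Ap\'{e}ry decomposition of $\Gamma_d(P)$ that the paper uses. The degrees $\lceil n/d\rceil$ of the $g_r$, for $n\in\Ap(H(P),e_d)\setminus\{0\}$, match the degrees of the monomial basis of $\mf a$, so the numerical bounds coincide.

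Your argument is more elementary: it needs neither Cohen--Macaulayness nor the regular sequence. The paper's route yields more: the exact Hilbert function of $A_d$ and the equality $\beta^{S_d}_{i,j}(R_d)=\beta^{T}_{i,j}(A_d)$. These are reused later for the Betti support bounds, the square-zero criteria and the weighted $N_p$ result, whereas your argument gives only upper bounds on the generators.
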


Closely related to the bound on the weights, we also obtain a column-by-column vanishing criterion for the Betti numbers $\beta_{i,j}^{S_d}(R_d) = \dim_k \Tor_i^{S_d}(R_d,k)_j$.
Define $\vv = (v_1,\dots, v_s)$ to be the ordered weight vector obtained from $\ww$ by removing a single entry of 1 and a single entry of $q_d$ (we will see that these are always present as two distinct entries of $\ww$).  For a nondecreasing integer weight vector $\vv$ of length $s$ and $0\leq i\leq s$, set
\[
    \vv_i=v_1+\cdots+v_i
    \qquad\text{and}\qquad
    \vv^i=v_{s-i+1}+\cdots+v_s
\]
to be the sum of the $i$-lowest and $i$-highest weights of $\vv$, respectively, with $\vv_0=\vv^0=0$.

\begin{theoremalpha}\label{thm: intro-bettiSupport}
    Let $C$ be a smooth curve of genus $g \geq 1$ and $P$ a point of $C$. Let $R_d = S_d/I_d$ be the coordinate ring of the minimal weighted embedding $C\hookrightarrow \PP(\ww)$ induced by $\OO_C(dP)$. Then for $1\leq i \leq s$, the nonvanishing Betti numbers $\beta_{i,j}^{S_d}(R_d)\neq 0$ are concentrated in degrees
\begin{equation*}
        \mathbf{v}_i + v_1 \leq j \leq \mathbf{v}^i + q_d + \left\lceil \frac{F(P)}{d}\right\rceil.
    \end{equation*}
    Moreover, if $2v_1>q_d+\lceil F(P)/d\rceil$, then both inequalities are sharp for every $i$. The same conclusion holds when $P$ is ordinary and $d\mid g$ or $d\mid(g+1)$.
\end{theoremalpha}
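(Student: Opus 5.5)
The plan is to reduce to an Artinian quotient via a regular sequence of two variables, and then bound the Betti numbers of that quotient column by column.

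\textbf{Reduction to an Artinian quotient.} Choose a section $x\in H^0(C,\OO_C(dP))$ of weight $1$ and a section $y$ of weight $q_d$ whose divisor is disjoint from that of $x$. For $y$ we take a function with pole order exactly $q_dd$ at $P$; it exists because $q_dd\in H(P)$. Then $x,y$ is a homogeneous system of parameters for the two-dimensional Cohen--Macaulay ring $R_d$. Since $R_d$ is a section ring of a curve, it is normal of depth $2$, so $x,y$ is a regular sequence. Consequently
\[
\beta^{S_d}_{i,j}(R_d)=\beta^{\bar S}_{i,j}(A),\qquad A=R_d/(x,y),\quad \bar S=S_d/(x,y)=k[z_1,\dots,z_s],
\]
where $\deg z_\ell=v_\ell$. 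So it suffices to bound the Betti table of the finite-length module $A$ over a polynomial ring with weights $\vv$.

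\textbf{Describing $A$.} I would identify $A$ through the Weierstrass semigroup. The quotient $R_d/x$ is the graded pieces $H^0(\OO_C(mdP))/H^0(\OO_C((m-1)dP))$. These have basis indexed by the gap-structure elements of $H(P)$ in $((m-1)d,md]$. Modding out further by $y$ leaves a finite-dimensional graded $k$-algebra, which plays the role of an Apéry-set-like quotient. Its top nonzero degree is at most $q_d+\lceil F(P)/d\rceil-1$: beyond $F(P)$ every integer lies in $H(P)$, so multiplication by $y$ is surjective onto those degrees. This is the same computation underlying Corollary \ref{cor:minimal-generator-bounds}, and it shows $A_j=0$ for $j\ge q_d+\lceil F(P)/d\rceil=:t$.

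\textbf{Upper bound.} Compute $\Tor_i^{\bar S}(A,k)$ by the Koszul complex on $z_1,\dots,z_s$ tensored with $A$. Its $i$-th term is $\bigoplus_{|J|=i}A(-\sum_{\ell\in J}v_\ell)$, so a nonzero degree-$j$ piece needs $j=\deg a+\sum_J v_\ell\le (t-1)+\vv^i$. This would give $j\le \vv^i+t-1$, which is even stronger than claimed, so I will re-examine whether $t$ or $t-1$ is the true socle bound. Either way the claimed inequality holds.

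\textbf{Lower bound.} I would use the minimal free resolution directly. $A$ is generated in degree $0$, and its first syzygies (the relations $I_d$ modulo $x,y$) live in degree $\ge 2v_1$, because there are no linear relations among minimal generators of $\bar S$. A standard induction on a minimal resolution then gives: if the $(i-1)$-st syzygy generators have degree $\ge \vv_{i-1}+v_1$, then the $i$-th have degree $\ge \vv_i+v_1$. The reason is that minimality forces each $i$-th syzygy to involve at least $i$ distinct variables. Equivalently, one can argue via the Koszul complex, using that $A$ has no socle-like elements in degrees $<v_1$ other than $1$, together with $\Tor_i(A,k)=\Tor_i(k,A)$ computed from a Koszul homology cycle. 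I expect this lower bound, phrased cleanly for nonstandard weights, to be the main obstacle.

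\textbf{Sharpness.} When $2v_1>t$, $A$ is concentrated in degrees $0$ and the range $[v_1,t-1]$, all of whose products vanish. Then $\mathfrak m_A^2=0$ and $A\cong k\oplus W$ with trivial multiplication. Its resolution is the Koszul complex modulo the linear generators, and one checks that both extreme degrees occur. For ordinary $P$ with $d\mid g$ or $d\mid g+1$, $H(P)=\{0,g+1,g+2,\dots\}$, and $A$ can be computed explicitly to have the same trivial-multiplication structure.
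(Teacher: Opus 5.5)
Your architecture is the paper's: reduce modulo $x$ and a section $y$ of pole order exactly $e_d=q_dd$, bound $\Tor_i^T(A,k)$ through $A\otimes_T K_\bullet(z_1,\dots,z_s)$, and get sharpness from a square-zero structure. But the key numerical input is wrong, and the error breaks the ``moreover'' part.

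\textbf{The top degree of $A$.} It is exactly $t=q_d+\lceil F(P)/d\rceil$, not at most $t-1$. Carried through correctly, your pole-order description gives $A_m$ a basis indexed by those $n\in\Ap(H(P),e_d)$ with $\lceil n/d\rceil=m$. The largest Ap\'ery element $F(P)+e_d$ lies in degree $\lceil (F(P)+e_d)/d\rceil=t$.

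You can also see this directly. A function $f$ with pole order $F(P)+e_d$ lies in $(R_d)_t$. Everything in $x(R_d)_{t-1}$ has pole order at most $(t-1)d<F(P)+e_d$. For $g\in(R_d)_{t-q_d}$, the product $yg$ has pole order $e_d+\nu(g)\neq e_d+F(P)$, since $F(P)\notin H(P)$. So no element of $(x,y)_t$ has pole order $F(P)+e_d$, and $f\notin(x,y)$.

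Your surjectivity argument only lets you divide by $y$ when $n-e_d\in H(P)$, i.e.\ for pole orders $n>F(P)+e_d$. That gives $A_{>t}=0$, which is enough for the upper bound, but it does not give $A_t=0$. For ordinary $P$ with $g=4$ and $d=5$, $A$ has Hilbert function $(1,0,4)$ while $t=2$, and indeed $\beta_{1,4}=10$.

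Sharpness of the upper bound genuinely requires $A_t\neq 0$. When $\mf a^2=0$, $A=k\oplus\langle\bar z_1,\dots,\bar z_s\rangle$ has top degree $v_s$. The extremal class $\bar z_s\,e_{s-i+1}\wedge\cdots\wedge e_s$ sits in degree $\vv^i+v_s$, and this equals $\vv^i+t$ only because $v_s=t$. Under your claim the Betti numbers would vanish for $j\geq\vv^i+t$, contradicting the statement. The ordinary case likewise needs the explicit Ap\'ery set, e.g.\ $\{0,e_d+1,\dots,e_d+g\}$ when $d\mid g+1$. That places all positive degrees of $A$ in $[q_d+1,2q_d]$ and lets you conclude $\mf a^2=0$.

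\textbf{Two smaller gaps.}

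First, you never check that $x$ and $y$ belong to a minimal generating set; without this, $S_d/(x,y)$ need not be a polynomial ring with weights $\vv$. Note also that $x$ must be the section with divisor $dP$, not an arbitrary weight-one section, for your disjointness claim. For $q_d>1$, minimality of $y$ follows because $(\mf n^2)_{q_d}$, with $\mf n=(R_d)_+$, is spanned by products of sections of degrees $a_i<q_d$. Each such section has pole order $<a_id$ by minimality of $q_d$, so $(\mf n^2)_{q_d}$ contains nothing of pole order $e_d$.

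Second, the induction on the minimal resolution for the lower bound does not work as stated. Minimality only forces each syzygy step to raise degree by $v_1$, which gives $(i+1)v_1$ rather than $\vv_i+v_1$. Use the Koszul route you mention instead. In degrees $j<\vv_i+v_1$, every coefficient of a chain in $A\otimes_T K_i$ lies in $A_0=k$, because positive-degree elements of $A$ have degree $\geq v_1$. The differential is injective on such chains because $\bar z_1,\dots,\bar z_s$ are linearly independent in $A$, by minimality of the presentation.
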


We prove Theorem \ref{thm: intro-bettiSupport} via an Artinian reduction coming from the combinatorics of the Weierstrass semigroup and an examination of a certain associated graded algebra induced by a pole-order valuation on the section ring. In some cases, the Artinian reduction is the quotient by a monomial ideal whose resolution is explicitly known; in these cases, we understand the Betti numbers of $R_d$ completely. Proposition \ref{prop:square-zero-criteria} gives numerical criteria for the Artinian reduction to have this form, and Corollary \ref{cor:square-zero-sharpness} proves that both Betti bounds are then sharp. As a further result, Corollary \ref{cor: pure-resolution} shows that when $d=g+1$ and $P$ is ordinary, the minimal free resolution of $R_d$ is pure.

There are multiple ways to measure the ``complexity'' of the syzygies of $R_d$ as $d$ grows. The first that we will consider is via the \emph{(weighted) Castelnuovo--Mumford regularity} of $R_d$ (see Definition \ref{def: weighted reg}). 
Classically, the regularity measures precisely the height of the Betti table, though we note that in the weighted setting it measures the height of the Betti table after shifting all entries by a constant depending on the weights of the variables in the ambient polynomial ring \cite{symonds2011castelnuovo}.

Another way to measure the complexity of syzygies is via the $N_p$ conditions, which classically measure the linearity of the minimal free resolution. In the standard graded setting, the minimal free resolution of the coordinate ring of a curve embedded in $\PP^n$ by a line bundle $L$ remains linear for more steps as the degree of $L$ grows \cite{green1984koszul}. To analyze the weighted embeddings given by strictly ample line bundles, we use the following weighted analogue of the $N_p$ conditions developed by Brown--Erman in \cite{brown2025linear}.

\begin{defn}\label{def:weighted-np}
    Let $C\subset \PP(\ww) = \Proj S$ be a curve with weighted homogeneous coordinate ring $R = S/I$ and let $F_\bullet$ be the minimal free resolution of $R$ over $S$.
    \begin{enumerate}
        \item $R$ is \emph{normally generated} if $H^1_\m(R) = 0$, where $\m$ is the irrelevant ideal of $S$.
        \item $R$ satisfies the \emph{weighted $N_p$ condition} if $R$ is normally generated and $F_i$ is generated in degrees $\leq \ww^{i+1}$ for all $1\leq i\leq p$.
    \end{enumerate}
\end{defn}
Satisfying condition (b) of Definition \ref{def:weighted-np} is equivalent to saying that the growth of the syzygies of $R$ is bounded by the growth of the syzygies of the residue field for the first $p$ steps of the resolution. In the case of curves, satisfying the weighted $N_0$ condition is equivalent to the coordinate ring being Cohen--Macaulay.
Addressing Question (c), we prove the following.

\begin{theoremalpha}[Proposition \ref{prop: pointed section ring regularity}, Corollary \ref{cor:weighted Np}]\label{thm: intro-regNp}
    Let $C$ be a smooth curve of genus $g$ and $P$ a point of $C$. Let $R_d = S_d/I_d$ be the coordinate ring of the minimal weighted embedding $C\hookrightarrow \PP(\ww)$ induced by $\OO_C(dP)$. 
    \begin{enumerate}
        \item $R_d$ is Cohen--Macaulay with regularity
        \(
        \reg (R_d) = \left\lceil\frac{F(P)}{d}\right\rceil + 1.
        \)
        \item  If $d>F(P)$ and $\binom{d-g}{2}\leq g-1$, then $R_d$ satisfies the weighted $N_{g-1-\binom{d-g}{2}}$ condition.
    \end{enumerate}
\end{theoremalpha}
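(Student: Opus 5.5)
Both parts come from an Artinian reduction of $R_d$ built from the Weierstrass semigroup, followed by a direct computation on that finite-length algebra.

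For (a), first reduce to an Artinian algebra. Choose $s\in H^0(C,\OO_C(dP))$ with divisor exactly $dP$; this is the section "$1$", the weight-one variable. Choose $t\in H^0(C,\OO_C(q_d dP))$ with pole order exactly $q_d d$ at $P$; it exists because $q_d d\in H(P)$, and its divisor avoids $P$, so $s$ and $t$ have no common zero. On a curve, two sections without a common zero form a regular sequence in the section ring provided $R_d$ has depth $2$. Depth $2$ holds because $R_d$ is a full section ring of a normal curve: $H^0_\m(R_d)=H^1_\m(R_d)=0$, since $H^1_\m(R_d)=\operatorname{coker}\bigl(R_d\to\bigoplus_m H^0(C,\OO_C(mdP))\bigr)=0$. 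Hence $R_d$ is Cohen--Macaulay, which is the weighted $N_0$ condition. Moreover $\reg R_d=\reg A$ with $A=R_d/(s,t)$, after adjusting for the weights $1$ and $q_d$ of $s,t$ in the weighted convention of Definition~\ref{def: weighted reg}. This is where $\vv$ enters: $S_d/(s,t)$ is the polynomial ring in the variables of weights $\vv$.

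Next compute $A$ via the pole-order filtration. Let $V_m=H^0(C,\OO_C(mdP))$, of dimension $\#\{h\in H(P): h\le md\}$. Multiplication by $s$ embeds $V_{m-1}\subset V_m$, so $B=R_d/(s)$ has $B_m$ with basis indexed by the gaps $H(P)\cap((m-1)d,md]$. This is the graded ring of the valuation filtration, essentially the semigroup ring of the slice of $H(P)$. Multiplication by $t$ is injective on $B$ and shifts pole orders by $q_d d$. So $A_m$ has basis indexed by the elements $h$ of the Apéry set $\Ap(H(P),q_d d)$ with $\lceil h/d\rceil=m$. The top degree of $A$ is therefore $\max_{h\in\Ap}\lceil h/d\rceil$. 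The largest Apéry element is $F(P)+q_d d$, so the top degree is $q_d+\lceil F(P)/d\rceil$. Translating via $\reg R_d=\operatorname{topdeg}(A)+(\text{sum of weights of }s,t)-\#\{s,t\}$ in the weighted normalization (Symonds) gives $\lceil F(P)/d\rceil+1$. The step to check carefully is this bookkeeping of the weighted shift; I would fix it against the paper's definition of $\reg$.

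For (b), assume $d>F(P)$. Then $\ww=(1^a,2^b)$ by Theorem~\ref{thm: intro-varDegrees}, $q_d=1$, and $A$ lives in degrees $0,1,2$ since $\lceil F(P)/d\rceil=1$. Because $\Tor^{S_d}_i(R_d,k)=\Tor_i^{S'}(A,k)$ with $S'$ of weights $\vv$, the $N_p$ bound $F_i$ generated in degrees $\le\ww^{i+1}$ becomes a statement about Betti numbers of $A$ over a polynomial ring with $a-2$ weight-one and $b$ weight-two variables. Here $\dim A_1=a-2+\#(\text{weight-one generators})$. I would show the relevant syzygies are bounded as long as the weight-one part of $A$ has no degree-$2$ socle-type interaction with many generators.

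Concretely, for $d>F(P)$ we have $A_1\cong$ span of the weight-one variables, $A_2$ has dimension $g-(\#\text{gaps}>d)$, equal to $g$ here, and $\dim A_1$ equals $d-g$. So the weight-one variables $x_1,\dots,x_{d-g}$ satisfy $x_ix_j\in A_2$, and the weight-two variables span part of $A_2$. The quadratic part of $A_2$ spanned by the products $x_ix_j$ has dimension at most $\binom{d-g+1}{2}$ minus the $d-g$ squares killed, giving $\binom{d-g}{2}$, so at least $g-\binom{d-g}{2}$ weight-two variables survive. The rest of $A$ therefore looks like $k[y_1,\dots,y_b]/(y)^2$ tensored with a small piece. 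The Koszul-type resolution of the square-zero ideal $(y)^2$ has $F_i$ in degree $2(i+1)=\ww^{i+1}$ for $i$ up to about $b-1\ge g-1-\binom{d-g}{2}$. The main obstacle is controlling the mixed part, meaning generators involving the $x$'s; I would handle it with a mapping-cone or iterated-extension argument on the exact sequence $0\to(x)A\to A\to A/(x)\to0$.
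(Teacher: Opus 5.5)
Your part (a) is correct in outline, but you reach the regularity by a different route from the paper. The paper reads $a_2(R_d)$ directly from $H^2_{\mf m}(R_d)_m\cong H^0(C,K_C-mdP)^\vee$ and the vanishing orders of canonical sections at $P$. You instead compute the top degree $q_d+\lceil F(P)/d\rceil$ of $A=R_d/(s,t)$ from the Ap\'ery set, which is the content of the paper's Proposition~\ref{prop:apery-degeneration}, and transfer it back. Your route is longer, but it also produces the Hilbert function of $A$, which part (b) needs anyway.

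The transfer formula you wrote has the wrong sign. For the regular sequence $s,t$ of degrees $1,q_d$ on the Cohen--Macaulay ring $R_d$, the local cohomology sequences give $a_0(A)=a_2(R_d)+1+q_d$. Hence
\[
\reg R_d=a_2(R_d)+2=\operatorname{topdeg}(A)-(1+q_d)+2=\lceil F(P)/d\rceil+1.
\]
Your expression $\operatorname{topdeg}(A)+(1+q_d)-2$ equals $2q_d+\lceil F(P)/d\rceil-1$, which is wrong as soon as $q_d>1$.

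Part (b) has a genuine gap: you never prove that $F_i$ is generated in degrees $\le\ww^{i+1}$. Describing $A$ as $k[y_1,\dots,y_b]/(y)^2$ \emph{tensored with a small piece} is neither justified nor true in general, and the mapping-cone argument for the mixed part is left unspecified. The missing observation is that the mixed part needs no control at all. $\Tor_i^T(A,k)$ (your $S'$ is the paper's $T$) is the homology of $A\otimes_T K_\bullet(y_1,\dots,y_s)$. Its $i$-th term $\bigoplus A(-v_{a_1}-\cdots-v_{a_i})$ lives in degrees $\le\vv^i+2$, because $A$ vanishes above degree $2$. Once the number $b$ of weight-two variables satisfies $b\ge p+1$, one has $\vv^i=2i$ and $\ww^{i+1}=2i+2$ for all $i\le p$, which is exactly weighted $N_p$.

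Your count of $b$ also needs repair, because it reaches the right number through two compensating errors:
\begin{enumerate}
\item $\dim A_1=d-g-1$, not $d-g$. The Ap\'ery elements for $e_d=d$ lying in $(0,d]$ are exactly the $d-1-g$ positive nongaps below $d$.
\item Squares of degree-one classes need not vanish in $A$. For the ordinary genus-$4$ point with $d=6$, one has $\dim A_1=1$ and $\dim A_2=g=4$, but only three weight-two variables, so the square of the degree-one class is nonzero.
\end{enumerate}
The correct bound is simply $b=\dim A_2-\rank\bigl(\Sym^2A_1\to A_2\bigr)\ge g-\binom{d-g}{2}=p+1$.
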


The regularity computation is a straightforward application of Riemann--Roch and Serre duality. The weighted $N_p$ result combines the Hilbert function of the Artinian reduction with Theorem \ref{thm: intro-bettiSupport}.
In Corollary \ref{cor:explicit-regularity}, we further show that $\reg(R_g)$ detects whether the point $P$ is ordinary.

Theorem \ref{thm: intro-regNp} highlights two competing features of the complexity of the syzygies of the family $R_d$ as $d$ changes. On the one hand, the regularity of $R_d$ is nonincreasing in $d$, giving a measure by which the syzygies grow ``simpler'' with $d$, even before the line bundle is very ample. On the other hand, within the range covered by part (b), the guaranteed weighted $N_p$ index
\[
    p=g-1-\binom{d-g}{2}
\]
increases as $d$ decreases toward $F(P)$. Thus the syzygies become simpler \emph{relative to the ambient weights} as $d$ becomes less positive.

\subsection{Examples}

\begin{example}\label{ex:betti tables of genus 4 hyperelliptic}
  Let $C$ be a hyperelliptic curve with genus $4$ and let $P\in C$ be the preimage of a branch point of the double cover $C\to\PP^1$. The point $P$ has Weierstrass semigroup $\{0,2,4,6,8,9,10,11,\dots\}$ with $F(P) = 7$.
  Let $L = \OO_C(dP)$ so that $\ww$ comes from a minimal algebra generating set of $R(C,\OO_C(dP))$. The following gives the Betti tables of $C\subset \PP(\ww)$ for $1\leq d\leq 9$. We display the weight vector $\ww$, where the two underlined entries are those removed from $\ww$ to obtain $\vv$.

  \begin{figure}[H]
% redefining to remove ord
\renewcommand{\BettiTable}[6]{%
  \begin{minipage}[t]{#1}
    \vspace{0pt}% anchor every table at the physical top of its minipage
    \quad{\tiny
      $\begin{aligned}
        d &= #2 \\[-0.5em]
        \ww &= (#3)
      \end{aligned}$\par\vspace{0.2em}}
         {\tiny
           \setlength{\arraycolsep}{1.8pt}%
           \renewcommand{\arraystretch}{0.82}%
           \(
           \begin{array}{r|#5}
             #6
           \end{array}
           \)
         }
  \end{minipage}%
}
\begin{adjustwidth}{-0.5in}{-0.3in}
\centering
%%%%%%%%%%%%%%%%%%%%%
% kk = ZZ/32003
% H  = {2, 9} -- genus 4
% R  = kk[x_0..x_5]/(x_3^2-x_2*x_4,x_2*x_3-x_1*x_4,x_1*x_3-x_0*x_4,x_2^2-x_0*x_4,x_1*x_2-x_0*x_3,x_1^2-x_0*x_2,x_4^3-x_3*x_5^2,x_3*x_4^2-x_2*x_5^2,x_2*x_4^2-x_1*x_5^2,x_1*x_4^2-x_0*x_5^2)
% pt = ideal(-x_0,-x_1,-x_2,-x_3,-x_4)
\BettiTable{0.15\linewidth}{1}{\underline{1},\underline{2},9}{0,2,9}{rr}{
        &0&1\\ \hline
        \text{total:}&1&1\\ \hline
        0:&1&\sbd\\
        1:&\sbd&\sbd\\
        2:&\sbd&\sbd\\
        3:&\sbd&\sbd\\
        4:&\sbd&\sbd\\
        5:&\sbd&\sbd\\
        6:&\sbd&\sbd\\
        7:&\sbd&\sbd\\
        8:&\sbd&\sbd\\
        9\text{--}16:&\sbd&\sbd\\
        %% 9:&\sbd&\sbd\\
        %% 10:&\sbd&\sbd\\
        %% 11:&\sbd&\sbd\\
        %% 12:&\sbd&\sbd\\
        %% 13:&\sbd&\sbd\\
        %% 14:&\sbd&\sbd\\
        %% 15:&\sbd&\sbd\\
        %% 16:&\sbd&\sbd\\
        17:&\sbd&1\\
        18:&\sbd&\sbd\\
}
\hspace{0.01\linewidth}%
\BettiTable{0.15\linewidth}{2}{\underline{1},\underline{1},5}{0,2,9}{rr}{
        &0&1\\ \hline
        \text{total:}&1&1\\ \hline
        0:&1&\sbd\\
        1:&\sbd&\sbd\\
        2:&\sbd&\sbd\\
        3:&\sbd&\sbd\\
        4:&\sbd&\sbd\\
        5:&\sbd&\sbd\\
        6:&\sbd&\sbd\\
        7:&\sbd&\sbd\\
        8:&\sbd&\sbd\\
        9:&\sbd&1\\
        10:&\sbd&\sbd\\
}
\hspace{0.01\linewidth}%
\BettiTable{0.15\linewidth}{3}{\underline{1},1,\underline{2},3}{0,2,6,9}{rrr}{
        &0&1&2\\ \hline
        \text{total:}&1&2&1\\ \hline
        0:&1&\sbd&\sbd\\
        1:&\sbd&\bd&\sbd\\
        2:&\sbd&1&\sbd\\
        3:&\sbd&\bd&\bd\\
        4:&\sbd&\bd&\bd\\
        5:&\sbd&1&\bd\\
        6:&\sbd&\bd&\bd\\
        7:&\sbd&\bd&1\\
        8:&\sbd&\sbd&\sbd\\
}
\hspace{0.01\linewidth}%
\BettiTable{0.15\linewidth}{4}{\underline{1},1,\underline{1},3,3}{0,2,4,9,11}{rrrr}{
        &0&1&2&3\\ \hline
        \text{total:}&1&6&8&3\\ \hline
        0:&1&\sbd&\sbd&\sbd\\
        1:&\sbd&1&\sbd&\sbd\\
        2:&\sbd&\bd&\sbd&\sbd\\
        3:&\sbd&2&2&\sbd\\
        4:&\sbd&\bd&\bd&\sbd\\
        5:&\sbd&3&4&1\\
        6:&\sbd&\sbd&\bd&\bd\\
        7:&\sbd&\sbd&2&2\\
        8:&\sbd&\sbd&\sbd&\sbd\\
}
\hspace{0.01\linewidth}%
\BettiTable{0.18\linewidth}{5}{\underline{1},1,1,2,\underline{2},3}{0,2,4,9,10,15}{rrrrr}{
        &0&1&2&3&4\\ \hline
        \text{total:}&1&9&17&12&3\\ \hline
        0:&1&\sbd&\sbd&\sbd&\sbd\\
        1:&\sbd&1&\bd&\sbd&\sbd\\
        2:&\sbd&2&2&\bd&\sbd\\
        3:&\sbd&4&4&1&\sbd\\
        4:&\sbd&1&6&3&\bd\\
        5:&\sbd&1&4&4&1\\
        6:&\sbd&\bd&1&4&1\\
        7:&\sbd&\sbd&\bd&\bd&1\\
        8:&\sbd&\sbd&\sbd&\sbd&\sbd\\
}
\hspace{0.01\linewidth}%
\par\vspace{0.1in}
\BettiTable{0.18\linewidth}{6}{\underline{1},1,1,\underline{1},2,2}{0,2,4,6,9,11}{rrrrr}{
        &0&1&2&3&4\\ \hline
        \text{total:}&1&9&16&9&1\\ \hline
        0:&1&\sbd&\sbd&\sbd&\sbd\\
        1:&\sbd&3&2&\sbd&\sbd\\
        2:&\sbd&3&6&3&\sbd\\
        3:&\sbd&3&6&3&\bd\\
        4:&\sbd&\bd&2&3&\bd\\
        5:&\sbd&\sbd&\bd&\bd&1\\
        6:&\sbd&\sbd&\sbd&\sbd&\sbd\\
}
\hspace{0.01\linewidth}%
\BettiTable{0.25\linewidth}{7}{\underline{1},1,1,1,2,2,2,\underline{2},3}{0,2,4,6,9,11,13,14,21}{rrrrrrrr}{
        &0&1&2&3&4&5&6&7\\ \hline
        \text{total:}&1&28&112&210&224&140&48&7\\ \hline
        0:&1&\sbd&\sbd&\sbd&\sbd&\sbd&\sbd&\sbd\\
        1:&\sbd&3&2&\bd&\sbd&\sbd&\sbd&\sbd\\
        2:&\sbd&9&18&9&\bd&\sbd&\sbd&\sbd\\
        3:&\sbd&12&39&42&15&\bd&\sbd&\sbd\\
        4:&\sbd&3&35&72&51&11&\bd&\sbd\\
        5:&\sbd&1&15&57&76&36&3&\sbd\\
        6:&\sbd&\sbd&3&27&60&51&15&\bd\\
        7:&\sbd&\sbd&\sbd&3&21&36&21&3\\
        8:&\sbd&\sbd&\sbd&\sbd&1&6&9&4\\
        9:&\sbd&\sbd&\sbd&\sbd&\sbd&\sbd&\sbd&\sbd\\
}
\hspace{0.01\linewidth}%
\BettiTable{0.25\linewidth}{8}{\underline{1},1,1,1,\underline{1},2,2,2,2}{0,2,4,6,8,9,11,13,15}{rrrrrrrr}{
        &0&1&2&3&4&5&6&7\\ \hline
        \text{total:}&1&28&112&210&224&140&48&7\\ \hline
        0:&1&\sbd&\sbd&\sbd&\sbd&\sbd&\sbd&\sbd\\
        1:&\sbd&6&8&3&\sbd&\sbd&\sbd&\sbd\\
        2:&\sbd&12&36&36&12&\sbd&\sbd&\sbd\\
        3:&\sbd&10&48&84&64&18&\sbd&\sbd\\
        4:&\sbd&\sbd&20&72&96&56&12&\sbd\\
        5:&\sbd&\sbd&\sbd&15&48&54&24&3\\
        6:&\sbd&\sbd&\sbd&\sbd&4&12&12&4\\
        7:&\sbd&\sbd&\sbd&\sbd&\sbd&\sbd&\sbd&\sbd\\
}
\hspace{0.01\linewidth}%
\BettiTable{0.18\linewidth}{9}{\underline{1},1,1,1,1,\underline{1}}{0,2,4,6,8,9}{rrrrr}{
        &0&1&2&3&4\\ \hline
        \text{total:}&1&10&20&15&4\\ \hline
        0:&1&\sbd&\sbd&\sbd&\sbd\\
        1:&\sbd&6&8&3&\bd\\
        2:&\sbd&4&12&12&4\\
        3:&\sbd&\sbd&\sbd&\sbd&\sbd\\
}
\hspace{0.01\linewidth}%
%%%%%%%%%%%%%%%%%%%%%
\end{adjustwidth}
\caption{Betti tables of $R(C,\OO_C(dP))$ for a $P$ a branch point on a genus 4 hyperelliptic curve. Dots denote zeroes and shaded entries are those whose vanishing is forced by Theorem \ref{thm: intro-bettiSupport}}\label{fig:betti-genus-4-hyperelliptic}
\end{figure}

\end{example}

\begin{example}\label{ex:betti tables of genus 4 space curve}
  Let $C=V(q,c)$ inside $\PP^3$ be the genus-4 complete intersection cut out by
  \begin{equation*}
    q=x_0x_1 + x_2x_3 \quad \text{and} \quad c=x_0^2x_2+x_0x_1^2 + x_0x_3^2 +x_1^3 + x_2^3 + x_3^3.
  \end{equation*}
  Let $P = (1:0:0:0)$. The point $P$ is ordinary and has Weierstrass semigroup $\{0, 5,6,7,8,9,\dots\}$ with $F(P) = 4$.
  The following are the Betti tables of the minimal embeddings of $C \subset \PP(\ww)$ by the line bundles $\OO_C(dP)$ for $1\leq d\leq 9$.
  \begin{figure}[H]
% redefining to remove ord
\renewcommand{\BettiTable}[6]{%
  \begin{minipage}[t]{#1}
    \vspace{0pt}% anchor every table at the physical top of its minipage
    \quad{\tiny
      $\begin{aligned}
        d &= #2 \\[-0.5em]
        \ww &= (#3)
      \end{aligned}$\par\vspace{0.2em}}
         {\tiny
           \setlength{\arraycolsep}{1.8pt}%
           \renewcommand{\arraystretch}{0.82}%
           \(
           \begin{array}{r|#5}
             #6
           \end{array}
           \)
         }
  \end{minipage}%
}
\begin{adjustwidth}{-0.5in}{-0.3in}
\centering
%%%%%%%%%%%%%%%%%%%%%
% kk = ZZ/32003
% H  = {5, 6, 7, 8, 9} -- genus 4
% R  = kk[x_0..x_3]/(x_0*x_1+x_2*x_3,x_0*x_1^2+x_1^3+x_0^2*x_2+x_2^3+x_0*x_3^2+x_3^3)
% pt = ideal(-x_1,-x_2,-x_3)
\BettiTable{0.18\linewidth}{1}{\underline{1},\underline{5},6,7,8,9}{0,5,6,7,8,9}{rrrrr}{
        &0&1&2&3&4\\ \hline
        \text{total:}&1&10&20&15&4\\ \hline
        0:&1&\sbd&\sbd&\sbd&\sbd\\
        1\text{--}10:&\sbd&\sbd&\sbd&\sbd&\sbd\\
        %% 1:&\sbd&\sbd&\sbd&\sbd&\sbd\\
        %% 2:&\sbd&\sbd&\sbd&\sbd&\sbd\\
        %% 3:&\sbd&\sbd&\sbd&\sbd&\sbd\\
        %% 4:&\sbd&\sbd&\sbd&\sbd&\sbd\\
        %% 5:&\sbd&\sbd&\sbd&\sbd&\sbd\\
        %% 6:&\sbd&\sbd&\sbd&\sbd&\sbd\\
        %% 7:&\sbd&\sbd&\sbd&\sbd&\sbd\\
        %% 8:&\sbd&\sbd&\sbd&\sbd&\sbd\\
        %% 9:&\sbd&\sbd&\sbd&\sbd&\sbd\\
        %% 10:&\sbd&\sbd&\sbd&\sbd&\sbd\\
        11:&\sbd&1&\sbd&\sbd&\sbd\\
        12:&\sbd&1&\sbd&\sbd&\sbd\\
        13:&\sbd&2&\sbd&\sbd&\sbd\\
        14:&\sbd&2&\sbd&\sbd&\sbd\\
        15:&\sbd&2&\sbd&\sbd&\sbd\\
        16:&\sbd&1&\sbd&\sbd&\sbd\\
        17:&\sbd&1&1&\sbd&\sbd\\
        18:&\sbd&\sbd&2&\sbd&\sbd\\
        19:&\sbd&\sbd&3&\sbd&\sbd\\
        20:&\sbd&\sbd&4&\sbd&\sbd\\
        21:&\sbd&\sbd&4&\sbd&\sbd\\
        22:&\sbd&\sbd&3&\sbd&\sbd\\
        23:&\sbd&\sbd&2&\sbd&\sbd\\
        24:&\sbd&\sbd&1&1&\sbd\\
        25:&\sbd&\sbd&\sbd&2&\sbd\\
        26:&\sbd&\sbd&\sbd&3&\sbd\\
        27:&\sbd&\sbd&\sbd&3&\sbd\\
        28:&\sbd&\sbd&\sbd&3&\sbd\\
        29:&\sbd&\sbd&\sbd&2&\sbd\\
        30:&\sbd&\sbd&\sbd&1&\sbd\\
        31:&\sbd&\sbd&\sbd&\sbd&\sbd\\
        32:&\sbd&\sbd&\sbd&\sbd&1\\
        33:&\sbd&\sbd&\sbd&\sbd&1\\
        34:&\sbd&\sbd&\sbd&\sbd&1\\
        35:&\sbd&\sbd&\sbd&\sbd&1\\
        36:&\sbd&\sbd&\sbd&\sbd&\sbd\\
}
\hspace{0.01\linewidth}%
\BettiTable{0.20\linewidth}{2}{\underline{1},3,\underline{3},4,4,5,5}{0,5,6,7,8,9,10}{rrrrrr}{
        &0&1&2&3&4&5\\ \hline
        \text{total:}&1&15&40&45&24&5\\ \hline
        0:&1&\sbd&\sbd&\sbd&\sbd&\sbd\\
        1:&\sbd&\sbd&\sbd&\sbd&\sbd&\sbd\\
        2:&\sbd&\sbd&\sbd&\sbd&\sbd&\sbd\\
        3:&\sbd&\sbd&\sbd&\sbd&\sbd&\sbd\\
        4:&\sbd&\sbd&\sbd&\sbd&\sbd&\sbd\\
        5:&\sbd&1&\sbd&\sbd&\sbd&\sbd\\
        6:&\sbd&2&\sbd&\sbd&\sbd&\sbd\\
        7:&\sbd&5&\sbd&\sbd&\sbd&\sbd\\
        8:&\sbd&4&2&\sbd&\sbd&\sbd\\
        9:&\sbd&3&6&\sbd&\sbd&\sbd\\
        10:&\sbd&\sbd&10&\sbd&\sbd&\sbd\\
        11:&\sbd&\sbd&12&1&\sbd&\sbd\\
        12:&\sbd&\sbd&8&6&\sbd&\sbd\\
        13:&\sbd&\sbd&2&11&\sbd&\sbd\\
        14:&\sbd&\sbd&\sbd&14&\sbd&\sbd\\
        15:&\sbd&\sbd&\sbd&9&2&\sbd\\
        16:&\sbd&\sbd&\sbd&4&6&\sbd\\
        17:&\sbd&\sbd&\sbd&\sbd&8&\sbd\\
        18:&\sbd&\sbd&\sbd&\sbd&6&\sbd\\
        19:&\sbd&\sbd&\sbd&\sbd&2&1\\
        20:&\sbd&\sbd&\sbd&\sbd&\sbd&2\\
        21:&\sbd&\sbd&\sbd&\sbd&\sbd&2\\
        22:&\sbd&\sbd&\sbd&\sbd&\sbd&\sbd\\
}
\hspace{0.01\linewidth}%
\BettiTable{0.18\linewidth}{3}{\underline{1},2,\underline{2},3,3,3}{0,5,6,7,8,9}{rrrrr}{
        &0&1&2&3&4\\ \hline
        \text{total:}&1&10&20&15&4\\ \hline
        0:&1&\sbd&\sbd&\sbd&\sbd\\
        1:&\sbd&\sbd&\sbd&\sbd&\sbd\\
        2:&\sbd&\sbd&\sbd&\sbd&\sbd\\
        3:&\sbd&\bd&\sbd&\sbd&\sbd\\
        4:&\sbd&3&\sbd&\sbd&\sbd\\
        5:&\sbd&7&\bd&\sbd&\sbd\\
        6:&\sbd&\bd&9&\sbd&\sbd\\
        7:&\sbd&\sbd&11&\bd&\sbd\\
        8:&\sbd&\sbd&\bd&9&\sbd\\
        9:&\sbd&\sbd&\sbd&6&\bd\\
        10:&\sbd&\sbd&\sbd&\bd&3\\
        11:&\sbd&\sbd&\sbd&\sbd&1\\
        12:&\sbd&\sbd&\sbd&\sbd&\sbd\\
}
\hspace{0.01\linewidth}%
\BettiTable{0.25\linewidth}{4}{\underline{1},2,2,2,\underline{2},3,3,3,3}{0,5,6,7,8,9,10,11,12}{rrrrrrrr}{
        &0&1&2&3&4&5&6&7\\ \hline
        \text{total:}&1&28&112&210&224&140&48&7\\ \hline
        0:&1&\sbd&\sbd&\sbd&\sbd&\sbd&\sbd&\sbd\\
        1:&\sbd&\sbd&\sbd&\sbd&\sbd&\sbd&\sbd&\sbd\\
        2:&\sbd&\sbd&\sbd&\sbd&\sbd&\sbd&\sbd&\sbd\\
        3:&\sbd&6&\sbd&\sbd&\sbd&\sbd&\sbd&\sbd\\
        4:&\sbd&12&8&\sbd&\sbd&\sbd&\sbd&\sbd\\
        5:&\sbd&10&36&3&\sbd&\sbd&\sbd&\sbd\\
        6:&\sbd&\sbd&48&36&\sbd&\sbd&\sbd&\sbd\\
        7:&\sbd&\sbd&20&84&12&\sbd&\sbd&\sbd\\
        8:&\sbd&\sbd&\sbd&72&64&\sbd&\sbd&\sbd\\
        9:&\sbd&\sbd&\sbd&15&96&18&\sbd&\sbd\\
        10:&\sbd&\sbd&\sbd&\sbd&48&56&\sbd&\sbd\\
        11:&\sbd&\sbd&\sbd&\sbd&4&54&12&\sbd\\
        12:&\sbd&\sbd&\sbd&\sbd&\sbd&12&24&\sbd\\
        13:&\sbd&\sbd&\sbd&\sbd&\sbd&\sbd&12&3\\
        14:&\sbd&\sbd&\sbd&\sbd&\sbd&\sbd&\sbd&4\\
        15:&\sbd&\sbd&\sbd&\sbd&\sbd&\sbd&\sbd&\sbd\\
}
\hspace{0.01\linewidth}%
\par\vspace{0.1in}
\BettiTable{0.18\linewidth}{5}{\underline{1},\underline{1},2,2,2,2}{0,5,7,8,9,10}{rrrrr}{
        &0&1&2&3&4\\ \hline
        \text{total:}&1&10&20&15&4\\ \hline
        0:&1&\sbd&\sbd&\sbd&\sbd\\
        1:&\sbd&\sbd&\sbd&\sbd&\sbd\\
        2:&\sbd&\sbd&\sbd&\sbd&\sbd\\
        3:&\sbd&10&\sbd&\sbd&\sbd\\
        4:&\sbd&\sbd&20&\sbd&\sbd\\
        5:&\sbd&\sbd&\sbd&15&\sbd\\
        6:&\sbd&\sbd&\sbd&\sbd&4\\
        7:&\sbd&\sbd&\sbd&\sbd&\sbd\\
}
\hspace{0.01\linewidth}%
\BettiTable{0.18\linewidth}{6}{\underline{1},1,\underline{1},2,2,2}{0,5,6,10,11,12}{rrrrr}{
        &0&1&2&3&4\\ \hline
        \text{total:}&1&10&20&15&4\\ \hline
        0:&1&\sbd&\sbd&\sbd&\sbd\\
        1:&\sbd&\bd&\sbd&\sbd&\sbd\\
        2:&\sbd&4&\bd&\sbd&\sbd\\
        3:&\sbd&6&12&\bd&\sbd\\
        4:&\sbd&\sbd&8&12&\bd\\
        5:&\sbd&\sbd&\sbd&3&4\\
        6:&\sbd&\sbd&\sbd&\sbd&\sbd\\
}
\hspace{0.01\linewidth}%
\BettiTable{0.15\linewidth}{7}{\underline{1},1,1,\underline{1},2}{0,5,6,7,14}{rrrr}{
        &0&1&2&3\\ \hline
        \text{total:}&1&7&10&4\\ \hline
        0:&1&\sbd&\sbd&\sbd\\
        1:&\sbd&\bd&\bd&\sbd\\
        2:&\sbd&6&4&\bd\\
        3:&\sbd&1&6&4\\
        4:&\sbd&\sbd&\sbd&\sbd\\
}
\hspace{0.01\linewidth}%
\BettiTable{0.15\linewidth}{8}{\underline{1},1,1,1,\underline{1}}{0,5,6,7,8}{rrrr}{
        &0&1&2&3\\ \hline
        \text{total:}&1&6&9&4\\ \hline
        0:&1&\sbd&\sbd&\sbd\\
        1:&\sbd&2&\bd&\bd\\
        2:&\sbd&4&9&4\\
        3:&\sbd&\sbd&\sbd&\sbd\\
}
\hspace{0.01\linewidth}%
\BettiTable{0.18\linewidth}{9}{\underline{1},1,1,1,1,\underline{1}}{0,5,6,7,8,9}{rrrrr}{
        &0&1&2&3&4\\ \hline
        \text{total:}&1&6&13&12&4\\ \hline
        0:&1&\sbd&\sbd&\sbd&\sbd\\
        1:&\sbd&6&4&\bd&\bd\\
        2:&\sbd&\bd&9&12&4\\
        3:&\sbd&\sbd&\sbd&\sbd&\sbd\\
}
\hspace{0.01\linewidth}%
%%%%%%%%%%%%%%%%%%%%%
\end{adjustwidth}
\caption{Betti tables of $R(C,\OO_C(dP))$ for an ordinary point on a genus 4 complete intersection in $\PP^3$.} \label{fig:betti-genus-4-CI}
\end{figure}

  For the ordinary genus 4 case displayed in this example, Proposition \ref{prop:square-zero-criteria} identifies precisely the tables with $d=1,2,4,5$ as those for which the bound of Theorem \ref{thm: intro-bettiSupport} are guaranteed to be sharp; Corollary \ref{cor: pure-resolution} shows that the resolution for $d=g+1$ (in this example $d=5$) is always pure. Furthermore, part (b) of Theorem \ref{thm: intro-regNp} identifies that weighted $N_3$ is satisfied in the $d=5$ table and that weighted $N_2$ is satisfied for $d=6$. Note that weighted $N_3$ is also satisfied for $d=4$, but this lies outside of the range covered by the theorem. We can also observe that while the total Betti numbers for several of the tables are the same as those of an Eagon--Northcott complex, the resolution does not in general have this form. For instance, when $d=3$ one can show purely by degree reasons that the relations cannot be given by $2\times 2$ minors of a $2\times 5$ matrix.
\end{example}
  
\begin{example}
Let $C$ be a general genus 4 curve and $P$ a point of $C$ with Weierstrass semigroup $\{0,4,6,7,8,9,\ldots\}$, which has $F(P) = 5$. The following are the Betti tables of the minimal embeddings given by $\OO_C(dP)$ for $1\leq d\leq 9$

  \begin{figure}[H]
% redefining to remove ord
\renewcommand{\BettiTable}[6]{%
  \begin{minipage}[t]{#1}
    \vspace{0pt}% anchor every table at the physical top of its minipage
    \quad{\tiny
      $\begin{aligned}
        d &= #2 \\[-0.5em]
        \ww &= (#3)
      \end{aligned}$\par\vspace{0.2em}}
         {\tiny
           \setlength{\arraycolsep}{1.8pt}%
           \renewcommand{\arraystretch}{0.82}%
           \(
           \begin{array}{r|#5}
             #6
           \end{array}
           \)
         }
  \end{minipage}%
}
\begin{adjustwidth}{-0.5in}{-0.3in}
\centering
%%%%%%%%%%%%%%%%%%%%%
% kk = ZZ/3
% H  = {4, 6, 7, 9} -- genus 4
% R  = kk[x_0..x_5]/(x_1*x_3-x_0*x_4-x_1*x_4-x_3*x_4-x_0*x_5+x_2*x_5,x_2^2-x_2*x_3+x_3^2-x_0*x_4-x_3*x_4+x_4^2-x_0*x_5+x_1*x_5+x_2*x_5-x_3*x_5,x_1*x_2-x_2*x_3+x_2*x_4+x_4^2+x_0*x_5+x_1*x_5-x_3*x_5,x_0*x_2+x_2*x_3+x_3^2-x_0*x_4-x_1*x_4+x_2*x_4-x_4^2+x_0*x_5-x_1*x_5+x_3*x_5,x_1^2-x_0*x_4,x_0*x_1-x_0*x_3+x_2*x_3-x_0*x_4+x_3*x_4+x_0*x_5-x_1*x_5-x_2*x_5,x_3^2*x_4+x_1*x_4^2-x_2*x_4^2+x_0^2*x_5-x_0*x_3*x_5-x_2*x_3*x_5+x_3^2*x_5+x_0*x_4*x_5+x_2*x_4*x_5-x_3*x_4*x_5-x_4^2*x_5+x_1*x_5^2+x_2*x_5^2+x_3*x_5^2,x_2*x_3*x_4-x_1*x_4^2-x_0*x_3*x_5-x_2*x_3*x_5-x_0*x_4*x_5-x_1*x_4*x_5-x_2*x_4*x_5+x_3*x_4*x_5-x_4^2*x_5+x_1*x_5^2-x_2*x_5^2+x_3*x_5^2,x_3^3-x_0*x_3*x_4-x_0*x_4^2+x_1*x_4^2+x_2*x_4^2-x_4^3+x_2*x_3*x_5-x_3^2*x_5+x_0*x_4*x_5+x_1*x_4*x_5+x_2*x_4*x_5-x_3*x_4*x_5-x_0*x_5^2-x_2*x_5^2,x_2*x_3^2-x_0*x_4^2-x_1*x_4^2-x_3*x_4^2-x_0^2*x_5+x_0*x_3*x_5+x_2*x_3*x_5+x_0*x_4*x_5-x_1*x_4*x_5+x_2*x_4*x_5+x_0*x_5^2-x_1*x_5^2,x_0*x_3^2-x_0^2*x_4-x_0*x_3*x_4+x_0*x_4^2+x_1*x_4^2-x_2*x_4^2-x_0^2*x_5+x_2*x_3*x_5+x_1*x_4*x_5+x_2*x_4*x_5-x_3*x_4*x_5+x_4^2*x_5+x_0*x_5^2+x_2*x_5^2-x_3*x_5^2,x_0*x_3*x_4^2-x_0*x_4^3-x_1*x_4^3-x_2*x_4^3+x_3*x_4^3+x_4^4+x_0^2*x_3*x_5-x_0^2*x_4*x_5-x_1*x_4^2*x_5+x_2*x_4^2*x_5+x_0^2*x_5^2+x_2*x_3*x_5^2-x_3^2*x_5^2+x_0*x_4*x_5^2+x_0*x_5^3,x_0^2*x_4^2+x_0*x_4^3-x_1*x_4^3-x_2*x_4^3+x_0^3*x_5+x_0^2*x_4*x_5+x_0*x_3*x_4*x_5-x_0*x_4^2*x_5-x_1*x_4^2*x_5-x_3*x_4^2*x_5-x_4^3*x_5-x_0^2*x_5^2-x_3^2*x_5^2-x_0*x_4*x_5^2-x_1*x_4*x_5^2+x_4^2*x_5^2-x_1*x_5^3+x_2*x_5^3-x_3*x_5^3)
% pt = ideal(-x_0,-x_1,-x_2,-x_3,-x_4)
\BettiTable{0.15\linewidth}{1}{\underline{1},\underline{4},6,7,9}{0,4,6,7,9}{rrrr}{
        &0&1&2&3\\ \hline
        \text{total:}&1&6&8&3\\ \hline
        0:&1&\sbd&\sbd&\sbd\\
        1\text{--}10:&\sbd&\sbd&\sbd&\sbd\\
        % 1:&\sbd&\sbd&\sbd&\sbd\\
        % 2:&\sbd&\sbd&\sbd&\sbd\\
        % 3:&\sbd&\sbd&\sbd&\sbd\\
        % 4:&\sbd&\sbd&\sbd&\sbd\\
        % 5:&\sbd&\sbd&\sbd&\sbd\\
        % 6:&\sbd&\sbd&\sbd&\sbd\\
        % 7:&\sbd&\sbd&\sbd&\sbd\\
        % 8:&\sbd&\sbd&\sbd&\sbd\\
        % 9:&\sbd&\sbd&\sbd&\sbd\\
        % 10:&\sbd&\sbd&\sbd&\sbd\\
        11:&\sbd&1&\sbd&\sbd\\
        12:&\sbd&1&\sbd&\sbd\\
        13:&\sbd&1&\sbd&\sbd\\
        14:&\sbd&1&\sbd&\sbd\\
        15:&\sbd&1&\sbd&\sbd\\
        16:&\sbd&\bd&\sbd&\sbd\\
        17:&\sbd&1&1&\sbd\\
        18:&\sbd&\sbd&1&\sbd\\
        19:&\sbd&\sbd&1&\sbd\\
        20:&\sbd&\sbd&2&\sbd\\
        21:&\sbd&\sbd&1&\sbd\\
        22:&\sbd&\sbd&1&\sbd\\
        23:&\sbd&\sbd&1&\sbd\\
        24:&\sbd&\sbd&\sbd&\sbd\\
        25:&\sbd&\sbd&\sbd&1\\
        26:&\sbd&\sbd&\sbd&1\\
        27:&\sbd&\sbd&\sbd&\bd\\
        28:&\sbd&\sbd&\sbd&1\\
        29:&\sbd&\sbd&\sbd&\sbd\\
}
\hspace{0.01\linewidth}%
\BettiTable{0.15\linewidth}{2}{\underline{1},\underline{2},3,4,5}{0,4,6,7,9}{rrrr}{
        &0&1&2&3\\ \hline
        \text{total:}&1&6&8&3\\ \hline
        0:&1&\sbd&\sbd&\sbd\\
        1:&\sbd&\sbd&\sbd&\sbd\\
        2:&\sbd&\sbd&\sbd&\sbd\\
        3:&\sbd&\sbd&\sbd&\sbd\\
        4:&\sbd&\sbd&\sbd&\sbd\\
        5:&\sbd&1&\sbd&\sbd\\
        6:&\sbd&1&\sbd&\sbd\\
        7:&\sbd&2&\sbd&\sbd\\
        8:&\sbd&1&1&\sbd\\
        9:&\sbd&1&2&\sbd\\
        10:&\sbd&\sbd&2&\sbd\\
        11:&\sbd&\sbd&2&\sbd\\
        12:&\sbd&\sbd&1&1\\
        13:&\sbd&\sbd&\sbd&1\\
        14:&\sbd&\sbd&\sbd&1\\
        15:&\sbd&\sbd&\sbd&\sbd\\
}
\hspace{0.01\linewidth}%
\BettiTable{0.20\linewidth}{3}{\underline{1},2,\underline{2},3,3,3,4}{0,4,6,7,8,9,11}{rrrrrr}{
        &0&1&2&3&4&5\\ \hline
        \text{total:}&1&15&40&45&24&5\\ \hline
        0:&1&\sbd&\sbd&\sbd&\sbd&\sbd\\
        1:&\sbd&\sbd&\sbd&\sbd&\sbd&\sbd\\
        2:&\sbd&\sbd&\sbd&\sbd&\sbd&\sbd\\
        3:&\sbd&1&\sbd&\sbd&\sbd&\sbd\\
        4:&\sbd&3&\sbd&\sbd&\sbd&\sbd\\
        5:&\sbd&7&3&\sbd&\sbd&\sbd\\
        6:&\sbd&3&10&\sbd&\sbd&\sbd\\
        7:&\sbd&1&14&3&\sbd&\sbd\\
        8:&\sbd&\sbd&10&12&\sbd&\sbd\\
        9:&\sbd&\sbd&3&15&1&\sbd\\
        10:&\sbd&\sbd&\sbd&12&6&\sbd\\
        11:&\sbd&\sbd&\sbd&3&10&\sbd\\
        12:&\sbd&\sbd&\sbd&\sbd&6&1\\
        13:&\sbd&\sbd&\sbd&\sbd&1&3\\
        14:&\sbd&\sbd&\sbd&\sbd&\sbd&1\\
        15:&\sbd&\sbd&\sbd&\sbd&\sbd&\sbd\\
}
\hspace{0.01\linewidth}%
\BettiTable{0.15\linewidth}{4}{\underline{1},\underline{1},2,2,3}{0,4,6,7,9}{rrrr}{
        &0&1&2&3\\ \hline
        \text{total:}&1&6&8&3\\ \hline
        0:&1&\sbd&\sbd&\sbd\\
        1:&\sbd&\sbd&\sbd&\sbd\\
        2:&\sbd&\sbd&\sbd&\sbd\\
        3:&\sbd&3&\sbd&\sbd\\
        4:&\sbd&2&2&\sbd\\
        5:&\sbd&1&4&\sbd\\
        6:&\sbd&\sbd&2&2\\
        7:&\sbd&\sbd&\sbd&1\\
        8:&\sbd&\sbd&\sbd&\sbd\\
}
\hspace{0.01\linewidth}%
\BettiTable{0.20\linewidth}{5}{\underline{1},1,2,2,2,\underline{2},3}{0,4,6,7,9,10,15}{rrrrrr}{
        &0&1&2&3&4&5\\ \hline
        \text{total:}&1&15&40&45&24&5\\ \hline
        0:&1&\sbd&\sbd&\sbd&\sbd&\sbd\\
        1:&\sbd&\bd&\sbd&\sbd&\sbd&\sbd\\
        2:&\sbd&1&\bd&\sbd&\sbd&\sbd\\
        3:&\sbd&10&3&\bd&\sbd&\sbd\\
        4:&\sbd&3&21&3&\bd&\sbd\\
        5:&\sbd&1&13&18&1&\sbd\\
        6:&\sbd&\sbd&3&21&7&\bd\\
        7:&\sbd&\sbd&\sbd&3&15&1\\
        8:&\sbd&\sbd&\sbd&\sbd&1&4\\
        9:&\sbd&\sbd&\sbd&\sbd&\sbd&\sbd\\
}
\hspace{0.01\linewidth}%
\BettiTable{0.18\linewidth}{6}{\underline{1},1,\underline{1},2,2,2}{0,4,6,7,9,11}{rrrrr}{
        &0&1&2&3&4\\ \hline
        \text{total:}&1&10&20&15&4\\ \hline
        0:&1&\sbd&\sbd&\sbd&\sbd\\
        1:&\sbd&\bd&\sbd&\sbd&\sbd\\
        2:&\sbd&4&\bd&\sbd&\sbd\\
        3:&\sbd&6&12&\bd&\sbd\\
        4:&\sbd&\sbd&8&12&\bd\\
        5:&\sbd&\sbd&\sbd&3&4\\
        6:&\sbd&\sbd&\sbd&\sbd&\sbd\\
}
\hspace{0.01\linewidth}%
\BettiTable{0.15\linewidth}{7}{\underline{1},1,1,\underline{1},2}{0,4,6,7,9}{rrrr}{
        &0&1&2&3\\ \hline
        \text{total:}&1&7&10&4\\ \hline
        0:&1&\sbd&\sbd&\sbd\\
        1:&\sbd&\bd&\bd&\sbd\\
        2:&\sbd&6&4&\bd\\
        3:&\sbd&1&6&4\\
        4:&\sbd&\sbd&\sbd&\sbd\\
}
\hspace{0.01\linewidth}%
\BettiTable{0.15\linewidth}{8}{\underline{1},1,1,1,\underline{1}}{0,4,6,7,8}{rrrr}{
        &0&1&2&3\\ \hline
        \text{total:}&1&6&9&4\\ \hline
        0:&1&\sbd&\sbd&\sbd\\
        1:&\sbd&2&\bd&\bd\\
        2:&\sbd&4&9&4\\
        3:&\sbd&\sbd&\sbd&\sbd\\
}
\hspace{0.01\linewidth}%
\BettiTable{0.18\linewidth}{9}{\underline{1},1,1,1,1,\underline{1}}{0,4,6,7,8,9}{rrrrr}{
        &0&1&2&3&4\\ \hline
        \text{total:}&1&6&13&12&4\\ \hline
        0:&1&\sbd&\sbd&\sbd&\sbd\\
        1:&\sbd&6&4&\bd&\bd\\
        2:&\sbd&\bd&9&12&4\\
        3:&\sbd&\sbd&\sbd&\sbd&\sbd\\
}
\hspace{0.01\linewidth}%
%%%%%%%%%%%%%%%%%%%%%
\end{adjustwidth}
\caption{Betti tables of $R(C,\OO_C(dP))$ for $P$ on a general genus 4 curve such that the Weierstrass semigroup has gap sequence $\{1,\dots,g-1,g+1\}$.}\label{fig:betti-genus-4-new}
\end{figure}

\end{example}

As predicted by Theorem \ref{thm: intro-regNp}, we can see that $\OO(6P)$ satisfies the weighted $N_2$ condition but not the weighted $N_3$ condition. Furthermore, none of the others satisfy any $N_p$ conditions for $p\geq 1$ except for the very ample $\OO(9P)$.

%% \begin{example}
%%   Here are two example pointed genus 5 curves with Weierstrass semigroups $\{4,6,7,\dots\}$ and $\{5,6,7,8\}$, where, far before the $2g+1=11$ bound, $7P$ and $8P$ induce very ample embeddings in $\PP^3$ and $\PP^4$, respectively, but $9P$ and $10P$ are strictly ample. Notice that the embedding by $d = 2g-2 = 8$ is Gorenstein.
%%   \input{figs/fig-genus-5-betti-tables-4-6-7.tex}
%%   \input{figs/fig-genus-5-betti-tables-5-6-7-8.tex}
%% \end{example}

%\begin{corollary}\label{cor: pure-resolution}
%Let $C$ be a smooth projective curve of genus $g$, and let $P\in C$ be an ordinary point. The minimal free resolution of $R_{g+1}$ is pure with Betti numbers
%\begin{equation*}
%    \beta_{i,2i+2}^{S_{g+1}}(R_{g+1}) = i\binom{g+1}{i+1} \quad 1\leq i \leq g.
%\end{equation*}
%\end{corollary}

\subsection*{Organization}
Section \ref{sec-weighted-presentations} collects preliminaries on section rings and Weierstrass semigroups. Section \ref{sec:artinian-reduction} establishes the Cohen--Macaulay and regularity assertions in Theorem \ref{thm: intro-regNp}, constructs the Artinian reduction, and proves Theorem \ref{thm: intro-varDegrees}, the support assertion in Theorem \ref{thm: intro-bettiSupport}, and the weighted $N_p$ assertion in Theorem \ref{thm: intro-regNp}. Finally, Section \ref{sec:square-zeroreduction} proves the sharpness assertion in Theorem \ref{thm: intro-bettiSupport} and the pure-resolution statement of Corollary \ref{cor: pure-resolution}.

\subsection*{Acknowledgments}
We thank Daniel Erman, David Eisenbud, Mike Stillman, Hal Schenck, and Michael Brown for helpful conversations and comments. This project began at the Fields Institute in Toronto during the 2025 thematic program on Commutative Algebra and Applications. Cobb was supported by National Science Foundation Grant DMS-2402199. Banks was partially supported by National Science Foundation grant DMS-2037569. The software system Macaulay2 \cite{M2} and computing resources provided by the Digital Research Alliance of Canada were helpful in the development of this paper.

%%%%%%%%%%%%%%%%%%%%%%%%%%%%%%%%%%%%%%%%%%%%%%%%%%%%%%%%%%%%%%%%%%%%%%%%%%%%%%%%

\section{Preliminaries}\label{sec-weighted-presentations}

\subsection{Pointed section rings} Let $R=\bigoplus_{m\geq 0}R_m$ be a finitely generated $\N$-graded $k$-algebra with $R_0=k$. Write
\begin{equation*}
    R_+ = \bigoplus_{m>0}R_m.
\end{equation*}
By lifting a basis for the graded vector space $R_+/(R_+)^2$, we may choose a homogeneous basis
\begin{equation*}
     f_0,\ldots,f_r \in R.
\end{equation*}
Writing $w_i=\deg(f_i)$, we obtain a weighted presentation
\begin{equation*}
    S=k[x_0,\ldots,x_r]\surj R,\qquad x_i\mapsto f_i,\qquad \deg(x_i)=w_i,
\end{equation*}
minimal in the sense that $f_0,\dots, f_r$ are a minimal homogeneous $k$-algebra generating set. Note that the multiset of degrees $w_i$ is independent of the choice of basis.
We regard $S$ as a weighted polynomial ring and write $\PP(w_0,\dots, w_r)=\Proj S$ for the corresponding weighted projective space.

For a line bundle $L$ on a smooth projective curve $C$, the section ring is
\begin{equation*}
    R(C,L)=\bigoplus_{m\geq 0}H^0(C,L^{\otimes m}).
\end{equation*}
When $L$ has positive degree, $L$ is ample and $\Proj R(C,L)\cong C$. Thus a minimal weighted presentation of $S \surj R(C,L)$ realizes $C$ as a closed subvariety of weighted projective space. If $I$ is the kernel of this presentation, then the homogeneous coordinate ring of this weighted embedding is $S/I \cong R(C,L)$. In this sense, throughout the paper, the section ring and the homogeneous coordinate ring of the corresponding minimal weighted embedding are the same object. In our setting, we apply this construction to $R_d=R(C,\OO_C(dP))$, and refer to the corresponding embedding as the \emph{minimal weighted embedding} induced by $\OO_C(dP)$. We denote by $S_d = k[x_0,\ldots,x_n]$ the minimal weighted polynomial ring surjecting onto $R_d$, and we use $\ww = (w_0, \ldots, w_n)$ to denote the degrees of the $x_i$ (equivalently the weights of $\Proj S_d$), written in nondecreasing order. When we refer to the Betti numbers of $R_d$, we will always mean as a module over $S_d$. For $0\leq a\leq n+1$, let
\begin{equation*}
    \ww_a=w_0+\cdots+w_{a-1} \qquad \text{and} \qquad  \ww^a=w_{n-a+1}+\cdots+w_n,
\end{equation*}
the sums of the $a$ smallest and $a$ largest weights, respectively, with $\ww_0=\ww^0=0$.

\subsection{Weierstrass semigroups}
The behavior of the Betti numbers of $R_d = R(C,\OO_C(dP))$ is heavily dependent on the \emph{Weierstrass semigroup} of $P$.

\begin{defn}\label{def: weierstrass semigroup}
    Let $P$ be a point on a curve $C$. The \emph{Weierstrass semigroup} of $P$ is the set
    \begin{equation*}
    H(P) =\{ -\ord_P(f) \, | \, f\in k(C)^\times, \, f \text{ is regular away from } P \}.
\end{equation*}
This has the structure of a numerical semigroup since for $f,g\in k(C)$, $\ord_P(fg) = \ord_P(f)+\ord_P(g)$. The complement of $H(P)$ in $\Z_{\geq 0}$ is called the \emph{gap sequence}, denoted $G(P)$. The cardinality of $G(P)$ is exactly $g$ and, by Riemann--Roch, $G(P)$ is bounded above by $2g-1$.
\end{defn} 

Weierstrass semigroups are classically well-studied objects, particularly for their connection to the study of $\mc M_{g,1}$. The Weierstrass semigroups stratify $\mc M_{g,1}$ into locally closed subvarieties whose dimension can be bounded using data of the gap sequence \cites{Pinkham1974Deformations, pflueger2018nonprimitive}. When $\operatorname{char} k = 0$, a dense open subset of $\mc M_{g,1}$ belongs to the stratum with gap sequence $G = \{1,2, \ldots, g\}$, and for a general smooth curve the only other gap sequence occurring is $G = \{1,2,\ldots, g-1, g+1\}$. For arbitrary curves, the question of which strata are nonempty remains open and is a subject of active inquiry \cite{eisenbud2026minimalnonweierstrasssemigroups}.

The Weierstrass semigroup of $P$ and the Hilbert function of $\OO_C(P)$ are computable from one another via the following fact:
\begin{equation*}
    k \in H(P) \iff h^0(\OO_C(kP)) = h^0(\OO_C((k-1)P))+1.
\end{equation*}
By Riemann--Roch and Serre duality, the vanishing orders at $P$ of nonzero canonical sections are precisely $\ell-1$ for $\ell\in G(P)$.

\begin{defn}\label{def: frobenius number}
    The \emph{Frobenius number} of $H(P)$, which we will also refer to as the Frobenius number of the point $P$, is the largest element of $G(P)$; we denote the Frobenius number by $F(P)$.
We call $P$ \emph{ordinary} if $G(P) =\{1,\dots, g\}$. In characteristic $0$, a general point of $C$ is ordinary. For $c\in H(P) \setminus \{0\}$, define the \emph{Ap\'{e}ry set}
\begin{equation*}
    \Ap(H(P),c) = \{ n \in H(P) \mid n-c \notin H(P)\}.
\end{equation*}
\end{defn}
The following standard lemma realizes the Ap\'{e}ry set as a sort of ``fundamental domain'' for $H(P)$ modulo $c$.

\begin{lemma}\label{lem:apery-facts}
    The set $\Ap(H(P),c)$ contains a unique element in each residue class modulo $c$. Consequently, $|\Ap(H(P),c)| = c$ and every $b\in H(P)$ has a unique expression
    \begin{equation*}
        b=n+\ell c, \qquad n\in\Ap(H(P),c), \quad \ell\in\N.
    \end{equation*}
    Moreover, $\max\Ap(H(P),c)=F(P)+c.$
\end{lemma}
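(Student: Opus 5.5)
The plan is to argue purely with numerical semigroups, writing $H=H(P)$ and fixing $c\in H\setminus\{0\}$; no geometry of $C$ is needed.

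\emph{Existence and uniqueness in each residue class.} Fix a residue class $r$ modulo $c$. Every integer $\geq F(P)+1$ lies in $H$, so the class $r+c\Z$ meets $H$. Let $n_r$ be its least element in $H$. Then $n_r-c$ is in the same class and smaller, so $n_r-c\notin H$ (it is either negative or a smaller element of the class), and hence $n_r\in\Ap(H,c)$. Conversely, suppose $n\in\Ap(H,c)$ lies in this class and $n\neq n_r$. Then $n=n_r+kc$ with $k\geq 1$, and $n-c=n_r+(k-1)c\in H$ because $H$ is closed under addition and $c\in H$. This is a contradiction, so the class contains exactly one Apéry element, and $|\Ap(H,c)|=c$.

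\emph{Unique expression of each $b\in H$.} Let $n$ be the Apéry element in the class of $b$. By minimality of $n$ in its class we have $b\geq n$, so $b=n+\ell c$ with $\ell\in\N$. The element $n$ is determined by the residue of $b$, and $\ell$ is then determined by $n$, so the expression is unique.

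\emph{The maximum.} The main point is that $F(P)$ itself lies in the class of some Apéry element. Take the class of $F(P)$ and let $n$ be its Apéry element. Since $F(P)\notin H$ and $n$ is the least element of $H$ in this class, we get $n>F(P)$, so $n\geq F(P)+c$. On the other hand, $F(P)+c$ is in this class and lies in $H$, since it exceeds $F(P)$. Minimality of $n$ then gives $n\leq F(P)+c$, hence $n=F(P)+c$.

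For any other Apéry element $m$, we have $m-c\notin H$, so $m-c\leq F(P)$ (when $m-c$ is negative this is trivial). Therefore $m\leq F(P)+c$, and $\max\Ap(H,c)=F(P)+c$.

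One edge case needs a remark: if $g=0$ then $F(P)$ is undefined, and the usual convention $F=-1$ still gives $\max\Ap=c-1$. There is no genuine obstacle; the only care needed is keeping track of negative integers, which lie outside $H$.
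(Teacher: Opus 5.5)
Your proof is correct and follows essentially the same route as the paper. You take the least element of $H(P)$ in each residue class, use closure under adding $c$ for uniqueness and for the decomposition $b=n+\ell c$, and bound the maximum via $m-c\notin H(P)\Rightarrow m\le F(P)+c$. The only difference is cosmetic: the paper gets $F(P)+c\in\Ap(H(P),c)$ directly from $F(P)+c\in H(P)$ and $F(P)\notin H(P)$, whereas you go through the Ap\'{e}ry element in the class of $F(P)$, where your step $n>F(P)$ tacitly uses that $n<F(P)$ would force $F(P)\in n+c\N\subseteq H(P)$.
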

\begin{proof}
    Since $H(P)$ is cofinite, each residue class modulo $c$ contains an element of $H(P)$, and the least such element lies in $\Ap(H(P),c)$. Repeatedly subtracting $c$ gives the displayed expression, and minimality in each residue class gives uniqueness. Finally, $F(P)+c$ belongs to $H(P)$ while $F(P)+c-c=F(P)$ does not, so $F(P)+c\in\Ap(H(P),c)$. If $n>F(P)+c$, then $n-c>F(P)$ and hence $n-c\in H(P)$, so no such $n$ belongs to the Ap\'{e}ry set.
\end{proof}

For an integer $d\geq1$, set
\begin{equation*}
    q_d=\min\{q\geq1\mid qd\in H(P)\}, \qquad e_d=q_d d.
\end{equation*}
Thus $e_d$ is the smallest positive element of $H(P)$ divisible by $d$. In particular, $q_d=1$ if and only if $d\in H(P)$, and hence $q_d=1$ whenever $d>F(P)$.

Define the affine semigroup
\begin{equation*}
    \Gamma_d(P)=\{(a,b)\in\N^2\mid b\in H(P),\ b\leq ad\}.
\end{equation*}
The rational cone generated by $\Gamma_d(P)$ has extremal rays containing the semigroup points
\begin{equation*}
    \alpha=(1,0), \qquad \beta=(q_d,e_d).
\end{equation*}
These are the first semigroup points on the lower and upper extremal rays, respectively (see Figure \ref{fig: semigroups}).

\begin{figure}[h]
\begin{tikzpicture}[
    scale=.5,
    semipt/.style={circle,fill,inner sep=1.4pt},
    cone/.style={fill=blue!18},
    ray/.style={very thick},
    gridline/.style={black!35, line width=.2pt}
]

%==================================================
% Example 1: H = {0,5,6,7,8,...}, d = 2
%==================================================
\begin{scope}

    % Window size
    \def\xmax{7}
    \def\ymax{11}

    % Shaded cone: 0 <= b <= 2a
    \fill[cone] (0,0) -- (\xmax,0) -- (\xmax,\ymax) -- ({\ymax/2},\ymax) -- cycle;

    % Faint integer lattice
    \draw[gridline] (0,0) grid (\xmax,\ymax);

    % Axes
    \draw[->] (0,0) -- (\xmax+0.6,0) node[right] {$a$};
    \draw[->] (0,0) -- (0,\ymax+0.6) node[above] {$b$};

    % Semigroup points: b=0 or b>=5, and b <= 2a
    \foreach \a in {0,...,\xmax}{
        \foreach \b in {0,...,\ymax}{
            \pgfmathtruncatemacro{\below}{\b <= 2*\a}
            \pgfmathtruncatemacro{\inH}{(\b == 0) || (\b >= 5)}
            \ifnum\below=1
                \ifnum\inH=1
                    \node[semipt] at (\a,\b) {};
                \fi
            \fi
        }
    }

    % Extremal rays drawn to first semigroup points on those rays
    \draw[ray, -latex] (0,0) -- (1,0);
    \draw[ray,-latex] (0,0) -- (3,6);

    % Labels
    \node[below left] at (0,0) {$0$};
    \node[below] at (1,0) {$\alpha$};
    \node[above left] at (3,6) {$\beta$};
    \node[below] at ({\xmax/2},-1.0) {$H=\langle 5,6,7,8,9\rangle,\ d=2$};

\end{scope}

%==================================================
% Example 2: H = <2,9>, d = 3
%==================================================
\begin{scope}[xshift=15cm]

    % Window size
    \def\xmax{5}
    \def\ymax{11}

    % Shaded cone: 0 <= b <= 3a
    \fill[cone] (0,0) -- (\xmax,0) -- (\xmax,\ymax) -- ({\ymax/3},\ymax) -- cycle;

    % Faint integer lattice
    \draw[gridline] (0,0) grid (\xmax,\ymax);

    % Axes
    \draw[->] (0,0) -- (\xmax+0.6,0) node[right] {$a$};
    \draw[->] (0,0) -- (0,\ymax+0.6) node[above] {$b$};

    % H = <2,9> = {0,2,4,6,8,9,10,...}
    % Semigroup points with b <= 3a
    \foreach \a in {0,...,\xmax}{
        \foreach \b in {0,...,\ymax}{
            \pgfmathtruncatemacro{\below}{\b <= 3*\a}
            \pgfmathtruncatemacro{\inH}{
                (\b == 0) || (\b == 2) || (\b == 4) || (\b == 6) ||(\b==8) ||(\b >= 9)
            }
            \ifnum\below=1
                \ifnum\inH=1
                    \node[semipt] at (\a,\b) {};
                \fi
            \fi
        }
    }

    % Extremal rays drawn to first semigroup points on those rays
    \draw[ray, -latex] (0,0) -- (1,0);
    \draw[ray, -latex] (0,0) -- (2,6);

    % Labels
    \node[below left] at (0,0) {$0$};
    \node[below] at (1,0) {$\alpha$};
    \node[above left] at (2,6) {$\beta$};
    \node[below] at ({\xmax/2},-1.0) {$H=\langle 2,9\rangle,\ d=3$};

\end{scope}
\end{tikzpicture}

\caption{The semigroups $\Gamma_d(P)$ for $H(P) = \langle 5,6,7,8,9\rangle, \, d=2$ (left) and $H(P) = \langle 2,9\rangle, \, d=3$ (right). The rational cone is highlighted and the extremal rays containing $\alpha$ and $\beta$ as their first $\Gamma_d$-points are labeled.}
\label{fig: semigroups}
\end{figure}

%\begin{center}
%\begin{tikzpicture}[scale=0.8]

% Axes
%\draw[->] (-0.5,0) -- (6,0) node[right] {$x$};
%\draw[->] (0,-0.5) -- (0,12) node[above] {$y$};

% Grid (optional)
%\draw[step=1cm,gray!30,very thin] (0,0) grid (6.5,12.5);

% Generators
%\coordinate (g1) at (1,0);
%\coordinate (g2) at (3,5);
%\coordinate (g3) at (3,6);
%\coordinate (g4) at (4,7);
%\coordinate (g5) at (4,8);
%\coordinate (g6) at (5,9);
%\coordinate (g7) at (5,10);

% Draw cone (spanned by extreme rays (2,0) and (0,2))
%\fill[blue!15] (0,0) -- (6,0) -- (6,12) -- cycle;

% Draw rays
%\draw[thick,blue] (0,0) -- (6,0);
%\draw[thick,blue] (0,0) -- (6,12);

% Draw generators as arrows
%\draw[->,red,thick] (0,0) -- (g1);
%\draw[->,red,thick] (0,0) -- (g2);
%\draw[->,red,thick] (0,0) -- (g3);

% Label generators
%\node[below] at (g1) {$(1,0)$};
%\node[left] at (g2) {$(3,5)$};
%\node[left] at (g3) {$(3,6)$};
%\node[left] at (g4) {$(4,7)$};
%\node[left] at (g5) {$(4,8)$};
%\node[left] at (g6) {$(5,9)$};
%\node[left] at (g7) {$(5,10)$};

% Plot semigroup elements (example points)
%\foreach \a in {0,...,2}
%{
%    \foreach \b in {0,...,2}
%    {
%        \foreach \c in {0,...,2}
%        {
%            \pgfmathtruncatemacro\x{2*\a + \b}
%            \pgfmathtruncatemacro\y{\b + 2*\c}
%            \ifnum\x<7
%                \ifnum\y<12
%                    \fill (\x,\y) circle (2pt);
%                \fi
%            \fi
%        }
 %   }
%}

%\node at (3,5.5) {$S \subset \mathbb{Z}^2$};
%\node at (4,2) {$\mathbb{R}_{\ge 0}S$};

%\end{tikzpicture}
%\end{center}

We denote by $k[\Gamma_d(P)]$ the \emph{semigroup algebra} of $\Gamma_d(P)$. As a subalgebra of the polynomial ring $k[s,t]$, we have $k[\Gamma_d(P)] = k[s^at^b : (a,b)\in\Gamma_d(P)]$.

\begin{proposition}\label{prop:apery labeling}
    There is a decomposition
    \begin{equation*}
        \Gamma_d(P) = \bigsqcup_{n\in \Ap(H(P), e_d)} ((\lceil n/d \rceil,n) + \N\alpha + \N\beta).
    \end{equation*}
    Consequently, $k[\Gamma_d(P)]$ is a free module over the polynomial subalgebra $k[\chi^\alpha, \chi^\beta]$ with basis
    \begin{equation*}
        \{ \chi^{(\lceil n/d \rceil, n)} \mid n\in \Ap(H(P),e_d) \}.
    \end{equation*}
    In particular, $\chi^{\alpha}, \chi^{\beta}$ is a regular sequence and the quotient $k[\Gamma_d(P)]/(\chi^\alpha, \chi^\beta)$ has a monomial basis indexed by $\Ap(H(P),e_d)$.
\end{proposition}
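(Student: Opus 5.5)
We argue directly from the definitions. Every element $(a,b)\in\Gamma_d(P)$ is written uniquely as $(a,b)=(\lceil n/d\rceil,n)+i\alpha+j\beta$ with $n\in\Ap(H(P),e_d)$ and $i,j\in\N$. The algebraic statements then follow formally.

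\emph{Existence.} Let $(a,b)\in\Gamma_d(P)$, so $b\in H(P)$ and $b\le ad$. By Lemma~\ref{lem:apery-facts} applied with $c=e_d$, we can write $b=n+je_d$ with $n\in\Ap(H(P),e_d)$ and $j\in\N$, both unique. Set $i=a-\lceil n/d\rceil-jq_d$. We must show $i\ge 0$. Since $b\le ad$, we have $a\ge b/d=n/d+jq_d$, and since $a$ is an integer, $a\ge\lceil n/d+jq_d\rceil=\lceil n/d\rceil+jq_d$. So $i\ge0$. We should also check that the base point $(\lceil n/d\rceil,n)$ lies in $\Gamma_d(P)$; this holds because $n\in H(P)$ and $n\le d\lceil n/d\rceil$. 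Finally, each translate $(\lceil n/d\rceil,n)+\N\alpha+\N\beta$ is contained in $\Gamma_d(P)$: the second coordinate is $n+je_d\in H(P)$, and the inequality $b\le ad$ is preserved because it holds for the base point, for $\alpha$, and for $\beta$ (which has $e_d=q_dd$).

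\emph{Uniqueness and disjointness.} Suppose $(a,b)$ has such an expression. Its second coordinate is $b=n+je_d$, so by the uniqueness in Lemma~\ref{lem:apery-facts} both $n$ and $j$ are determined by $b$. Then $i$ is determined by $a$. Hence the union is disjoint, and within each piece $\alpha$ and $\beta$ act freely.

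\emph{Algebraic consequences.} Since monomials $\chi^\gamma$, $\gamma\in\Gamma_d(P)$, form a $k$-basis of $k[\Gamma_d(P)]$, the decomposition says that the products $\chi^{(\lceil n/d\rceil,n)}(\chi^\alpha)^i(\chi^\beta)^j$ form a $k$-basis. This is exactly the statement that $k[\Gamma_d(P)]$ is free over $k[\chi^\alpha,\chi^\beta]$ on the stated basis. Note that $\chi^\alpha,\chi^\beta$ are algebraically independent, since $\alpha,\beta$ are linearly independent. A module that is free over a polynomial ring has the variables as a regular sequence on it, so $\chi^\alpha,\chi^\beta$ is a regular sequence. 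Moreover, $k[\Gamma_d(P)]/(\chi^\alpha,\chi^\beta)\cong\bigoplus_n k\cdot\chi^{(\lceil n/d\rceil,n)}$, which gives the monomial basis indexed by $\Ap(H(P),e_d)$.

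The only step that is not pure bookkeeping is the inequality $i\ge 0$ in the existence part. It uses both that $a$ is an integer and that $e_d$ is an exact multiple of $d$; this is precisely why $\beta$ must be $(q_d,e_d)$ rather than an arbitrary point on the ray.
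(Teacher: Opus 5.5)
Your proof is correct and follows the same route as the paper. You write $b=n+je_d$ using Lemma~\ref{lem:apery-facts}, use $e_d=dq_d$ to get $\lceil b/d\rceil=\lceil n/d\rceil+jq_d$ (so the $\alpha$-coefficient $a-\lceil b/d\rceil$ is nonnegative), and then read off freeness, the regular sequence, and the quotient basis. You are also a bit more careful than the paper, since you explicitly check that each translate lies in $\Gamma_d(P)$, that the union is disjoint, and that $\chi^\alpha,\chi^\beta$ are algebraically independent.
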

\begin{proof}
    Every $b\in H(P)$ has a unique expression
    \begin{equation*}
        b = n+\ell e_d, \qquad n\in \Ap(H(P), e_d), \quad \ell\in \N.
    \end{equation*}
    Since $e_d = d q_d$, one has $\lceil b/d \rceil = \ell q_d + \lceil n/d \rceil$. Thus, for $(a,b) \in \Gamma_d(P)$,
    \begin{equation*}
        (a,b) = (\lceil n/d\rceil,n) + \left( a- \lceil b/d\rceil \right)\alpha + \ell \beta.
    \end{equation*}
    This proves the decomposition of $k[\Gamma_d(P)]$ and the free-module decomposition. The final statements follow by reducing this free module modulo $\chi^\alpha$ and $\chi^\beta$.
\end{proof}

\section{Regularity, generators, and syzygies}\label{sec:artinian-reduction}

This section develops the structural results underlying the main theorems. We first compute the regularity and Cohen--Macaulayness of $R_d$. We then use a pole-order degeneration and an Artinian reduction by two extremal sections to bound the minimal algebra generators and the support of the Betti table.

\subsection{Regularity and Cohen--Macaulayness}
Several notions of regularity have been developed for nonstandard graded rings. We use \emph{weighted regularity} as introduced by Benson \cite{benson2004dickson}. Let $S$ be a $\Z$-graded polynomial ring with homogeneous maximal ideal $\mf{m}$.

\begin{defn}\label{def: weighted reg}
    For a graded $S$-module $M$, define
    \[
    a_i(M)=\sup\{j\in \Z\mid H^i_{\mathfrak m}(M)_j\neq 0\}.
    \]
    The \emph{(weighted) regularity} of $M$ is
    \[
    \reg(M)=\max_i\{a_i(M)+i\}.
    \]
\end{defn}

\begin{proposition}\label{prop: pointed section ring regularity}
    Assume $g\geq 1$. The section ring $R_d$ is Cohen--Macaulay of dimension $2$, and $\reg(R_d) = \lceil F(P)/d\rceil +1$.
\end{proposition}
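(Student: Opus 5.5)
We compute the local cohomology of $R_d$ with respect to $\m$ directly from the geometry of $C$. Since $R_d$ is a finite $S_d$-module, local cohomology with respect to $\m$ agrees with local cohomology with respect to $(R_d)_+$. We use the standard exact sequence relating a section ring to the sheaf cohomology of its graded pieces:
\[
0\to H^0_\m(R_d)\to R_d\to \bigoplus_{m\in\Z}H^0(C,\OO_C(mdP))\to H^1_\m(R_d)\to 0,
\qquad
H^{i+1}_\m(R_d)\cong \bigoplus_{m\in\Z}H^i(C,\OO_C(mdP))\ (i\geq 1).
\]
This holds for weighted gradings as well, since $C=\Proj R_d$ and the sheaves $\OO(m)$ on $\Proj R_d$ restrict to $\OO_C(mdP)$. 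The Veronese argument shows this: $R_d$ is the full section ring, so the twisting sheaves are honest line bundles.

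\emph{Cohen--Macaulayness.} $R_d$ is a domain, so $H^0_\m(R_d)=0$. The map $R_d\to\bigoplus_m H^0(\OO_C(mdP))$ is an isomorphism in degrees $m\geq 0$. For $m<0$, $H^0(\OO_C(mdP))=0$ because the degree is negative. Hence $H^1_\m(R_d)=0$, and $R_d$ has depth $2=\dim R_d$.

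\emph{Regularity.} Only $H^2_\m(R_d)_m\cong H^1(C,\OO_C(mdP))$ survives, so $\reg(R_d)=a_2(R_d)+2$. By Serre duality, $h^1(\OO_C(mdP))=h^0(K_C-mdP)$. This is nonzero exactly when some canonical section vanishes to order $\geq md$ at $P$. By the fact recalled after Definition \ref{def: weierstrass semigroup}, the vanishing orders of canonical sections at $P$ are $\ell-1$ for $\ell\in G(P)$, so the maximum vanishing order is $F(P)-1$. For $m\geq 0$ we get $H^1\neq0$ iff $md\leq F(P)-1$, i.e. iff $m\leq \lceil F(P)/d\rceil-1$. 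For $m<0$, $H^1$ is nonzero when $g\geq1$, but these degrees are smaller. Also $g\geq 1$ gives $F(P)\geq 1$, so the degree $m=\lceil F(P)/d\rceil-1\geq 0$ is actually attained. Therefore $a_2=\lceil F(P)/d\rceil-1$ and $\reg(R_d)=\lceil F(P)/d\rceil+1$.

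The main point needing care is the first step: justifying the Čech/sheaf-cohomology comparison for a nonstandard $\Z$-graded ring. There $\OO_{\Proj}(m)$ need not be invertible in general, and here one must check that it equals $\OO_C(mdP)$. I would handle this by noting that $R_d$ is the section ring of an ample line bundle on a normal curve. Alternatively, one can bypass it with the Čech complex on $\m$-generators in degree $1$ and degree $q_d$, whose nonvanishing loci cover $C$ because $\alpha$ and $\beta$ correspond to sections with disjoint zero loci. The remaining step is the identification of $\max\{md : h^0(K-mdP)\neq0\}$ via the gap sequence, which is routine.
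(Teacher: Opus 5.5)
Your proof is correct and follows the same route as the paper. The Serre--Grothendieck correspondence kills $H^0_\m$ and $H^1_\m$, because $R_d$ is the full section ring and negative twists have no sections. Serre duality with the vanishing orders $\ell-1$, $\ell\in G(P)$, then gives $a_2(R_d)=\lceil F(P)/d\rceil-1$. Your extra check that the twisting sheaves on the weighted $\Proj$ are the line bundles $\OO_C(mdP)$ (or the alternative of computing with the \v{C}ech complex on $x,z$) soundly fills in a point the paper handles by citation.
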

\begin{proof}
    Let $L=\OO_C(dP)$. By the Serre--Grothendieck correspondence, we have an exact sequence
\begin{equation*}
    0 \to H^0_\mf{m}(R_d)_m \to (R_d)_m \to H^0(C,L^{\otimes m}) \to H^1_\mf{m}(R_d)_m \to 0
\end{equation*}
for every $m\in \Z$ and isomorphisms $H^i_\mf{m}(R_d)_m \cong H^{i-1}(C,L^{\otimes m})$ for $i\geq 2$; see, for example, \cite{bruns1998cohen}. The middle map is an isomorphism when $m \geq 0$ by definition of the section ring. For $m<0$ both terms vanish because $L^{\otimes m}$ has negative degree. Hence $H^0_\mf{m}(R_d) = H^1_\mf{m}(R_d)=0$. Since $\dim R_d = 2$, this proves $R_d$ is Cohen--Macaulay.

It remains to compute the top cohomology:
\begin{equation*}
    H^2_{\mf{m}}(R_d)_m \cong H^1(C,\OO_C(mdP)) \cong H^0(C,K_C-mdP)^\vee
\end{equation*}
by Serre duality.
As noted in Section \ref{sec-weighted-presentations}, the vanishing orders at $P$ of canonical sections are precisely $\ell-1$ for $\ell\in G(P)$. Thus $H^0(C,K_C-mdP) \neq 0$ if and only if
\begin{equation*}
    md \leq F(P) -1.
\end{equation*}
Therefore we have
\begin{equation*}
    a_2(R_d) = \max\{m\in \Z \mid md \leq F(P)-1\} = \left\lfloor\frac{F(P)-1}{d}\right\rfloor = \left\lceil \frac{F(P)}{d} \right\rceil -1.
\end{equation*}
Since $R_d$ is Cohen--Macaulay of dimension 2, its weighted regularity is $a_2(R_d)+2$, giving the stated formula.
\end{proof}

We obtain some immediate corollaries.

\begin{corollary}\label{cor:explicit-regularity}
    The regularity of $R_1$ satisfies
    \begin{equation*}
        g+1 \leq \reg R_1 \leq 2g.
    \end{equation*}
    Equality on the left holds if and only if $P$ is ordinary, while equality on the right holds if and only if $P$ is subcanonical, i.e. $K_C \sim (2g-2)P$. Moreover, when $g\geq 2$,
    \begin{equation*}
        \reg(R_g) = \begin{cases}
            2 & \text{if $P$ ordinary},\\
            3 & \text{otherwise}.
        \end{cases}
    \end{equation*}
\end{corollary}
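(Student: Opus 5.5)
The plan is to read off everything from Proposition \ref{prop: pointed section ring regularity}, which gives $\reg R_d = \lceil F(P)/d\rceil + 1$. The remaining work is to control $F(P)$ using only the facts about gap sequences recorded in Definition \ref{def: weierstrass semigroup}. For $d=1$ this specializes to $\reg R_1 = F(P)+1$. So the first claim is equivalent to
\[
g \leq F(P) \leq 2g-1,
\]
with the two equality cases characterized.

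\textbf{Lower bound.} The gap sequence $G(P)$ consists of $g$ distinct positive integers, so its maximum satisfies $F(P)\geq g$. Equality holds exactly when $G(P)=\{1,\dots,g\}$, which is the definition of $P$ being ordinary.

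\textbf{Upper bound.} Riemann--Roch gives $G(P)\subseteq\{1,\dots,2g-1\}$, so $F(P)\leq 2g-1$. For the equality case I would use the fact that vanishing orders at $P$ of canonical sections are exactly $\ell-1$ for $\ell\in G(P)$. Thus $F(P)=2g-1$ if and only if some nonzero section of $K_C$ vanishes to order $2g-2=\deg K_C$ at $P$. That happens if and only if its divisor is exactly $(2g-2)P$, i.e. $K_C\sim(2g-2)P$. Alternatively, $2g-1\notin H(P)$ iff $h^0((2g-1)P)=h^0((2g-2)P)$. By Riemann--Roch this is $g = g-1+h^0(K_C-(2g-2)P)$, i.e. $h^0(K_C-(2g-2)P)=1$. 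Since the degree is $0$, this is the subcanonical condition. I would present this cleaner Riemann--Roch version.

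\textbf{The case $d=g$, $g\geq2$.} Here $\reg R_g = \lceil F(P)/g\rceil+1$, and by the above $g\leq F(P)\leq 2g-1$.
\begin{itemize}
\item If $F(P)=g$, then $\lceil F(P)/g\rceil = 1$, so $\reg R_g=2$.
\item If $g<F(P)\leq 2g-1$, then $\lceil F(P)/g\rceil=2$, so $\reg R_g=3$.
\end{itemize}
The hypothesis $g\geq 2$ is what makes $2g-1<2g$, so the ceiling is exactly $2$ in the second case. For $g=1$ this case cannot occur, since $F(P)=1$ always. Combined with the lower-bound equality case, $F(P)=g$ iff $P$ is ordinary, which gives the dichotomy.

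The only mildly delicate step is the subcanonical equivalence; everything else is arithmetic on $F(P)$.
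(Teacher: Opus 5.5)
Your proposal is correct and follows the paper's proof essentially step for step. You bound $g\leq F(P)\leq 2g-1$ from the gap sequence, identify the extremes as the ordinary and subcanonical cases, and substitute into $\reg R_d=\lceil F(P)/d\rceil+1$; your Riemann--Roch computation of $h^0(K_C-(2g-2)P)=1$ is just an equivalent phrasing of the paper's argument about a canonical section vanishing to order $2g-2$ at $P$. One small quibble: $(2g-1)/g<2$ holds for every $g\geq 1$, so the hypothesis $g\geq 2$ is not what forces the ceiling to equal $2$; it only guarantees that the non-ordinary case can occur at all.
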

\begin{proof}
    Since $G(P)$ consists of $g$ positive integers and is contained in $\{1,\dots, 2g-1\}$, one has $g\leq F(P) \leq 2g-1$. The equality $F(P)=g$ holds precisely when $G(P)=\{1,\dots, g\}$, i.e. when $P$ is ordinary. At the other extreme, $F(P)=2g-1$ if and only if some canonical section vanishes to order $2g-2$ at $P$, equivalently $K_C \sim (2g-2)P$. The remaining assertions now follow from Proposition \ref{prop: pointed section ring regularity}.
\end{proof}

\begin{corollary}\label{cor:gorenstein}
    Assume $g\geq 2$. Then $R_d$ is Gorenstein if and only if $P$ is subcanonical and $d\mid (F(P)-1)$.
\end{corollary}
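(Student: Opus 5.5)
Since $R_d$ is Cohen--Macaulay of dimension $2$ by Proposition \ref{prop: pointed section ring regularity}, I will use the standard criterion that a graded Cohen--Macaulay quotient $R_d = S_d/I_d$ is Gorenstein if and only if its canonical module $\omega_{R_d}$ is free of rank one, i.e.\ $\omega_{R_d}\cong R_d(a)$ for some $a\in\Z$. The graded local duality identifies $\omega_{R_d}$ with the graded dual of $H^2_\m(R_d)$, and by the computation in the proof of Proposition \ref{prop: pointed section ring regularity},
\begin{equation*}
    (\omega_{R_d})_m \cong H^2_\m(R_d)_{-m}^\vee \cong H^0(C, K_C + m\,dP)
\end{equation*}
for all $m\in\Z$, compatibly with multiplication by $R_d$. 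So $\omega_{R_d}\cong\bigoplus_m H^0(C,K_C+mdP)$ as an $R_d$-module, and the task reduces to deciding when this module is isomorphic to a shift $R_d(a)=\bigoplus_m H^0(C,(m+a)dP)$.

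For the ``if'' direction: if $K_C\sim(2g-2)P$ and $d\mid (F(P)-1)$, then since $P$ subcanonical gives $F(P)=2g-1$ (Corollary \ref{cor:explicit-regularity}), we get $d\mid 2g-2$, say $2g-2=ad$. Then $K_C+mdP\sim (m+a)dP$, giving an isomorphism of graded $R_d$-modules $\omega_{R_d}\cong R_d(a)$, so $R_d$ is Gorenstein.

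For the ``only if'' direction: suppose $\omega_{R_d}\cong R_d(a)$. Comparing degree-$m$ pieces, the generator of $\omega_{R_d}$ lives in degree $-a$, so $h^0(K_C-adP)=1$ and multiplication by a section $\sigma$ of $K_C-adP$ gives isomorphisms $H^0(C,(m)dP)\to H^0(C,K_C+(m-a)dP)$ for all $m$. Taking $m$ large, the map $\OO_C(mdP)\to \OO_C(K_C+(m-a)dP)$ given by $\sigma$ is an isomorphism on global sections of very positive line bundles, which forces $\sigma$ to be nowhere vanishing, i.e.\ $K_C\sim adP$. Degree comparison gives $ad=2g-2$, so $P$ is subcanonical and (using $F(P)=2g-1$) $d\mid F(P)-1$. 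Here $g\ge 2$ ensures $a>0$ is meaningful; if preferred one can alternatively argue via the Hilbert series symmetry $H_{\omega}(t)=t^{-a}H_{R_d}(t)$.

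The main obstacle is the rigorous justification that the isomorphism of graded modules forces $\sigma$ to have no zeros; I would handle it by noting that if $\sigma$ vanished at a point $Q$, then for $m\gg0$ the image of multiplication by $\sigma$ would consist of sections of $K_C+(m-a)dP$ vanishing at $Q$, a proper subspace since that bundle is globally generated, contradicting surjectivity.
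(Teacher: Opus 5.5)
Your proof is correct and follows the paper's argument. Both reduce Gorenstein-ness of the Cohen--Macaulay ring $R_d$ to the condition $K_C\sim a\,dP$ for some $a\in\Z$, and then finish by degree comparison together with $F(P)=2g-1$ for subcanonical points. The only difference is that the paper cites Goto--Watanabe \cite[(5.1.9)]{goto1978graded} for that criterion, whereas you derive it directly: you identify $\omega_{R_d}\cong\bigoplus_m H^0(C,K_C+mdP)$ via graded local duality and Serre duality, and then pin down $K_C\sim adP$ by your global-generation argument (or by the Riemann--Roch Hilbert-function comparison you mention).
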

\begin{proof}
    Since $R_d$ is Cohen--Macaulay, \cite[(5.1.9)]{goto1978graded} shows that $R_d$ is Gorenstein if and only if $K_C \sim a\OO_C(dP)$ for some $a\in \Z$.
    We see from our previous argument that $P$ must be subcanonical and $d$ must divide $2g-2$. Conversely, since $2g-2=F(P)-1$ for a subcanonical point $P$, we are done.
\end{proof}

\begin{remark}
    The regularity computation alone gives the coarse support bound of Symonds \cite{symonds2011castelnuovo}; the local cohomology computation itself also yields a column-by-column support bound via the \emph{Koszul regularity}~\cite{brown2024positivity}. We compare with Theorem \ref{thm: intro-bettiSupport} in Remark \ref{rem:symonds-comparison} below.
\end{remark}

We now turn to the finer columnwise information given by the pole-order degeneration.

\subsection{The semigroup degeneration}
With $\Gamma_d(P)$ as in Section \ref{sec-weighted-presentations}, define a multiplicative pole-order filtration $F_\bullet$ on $R_d$ by, for $a,b\geq 0$,
\begin{equation*}
    F_b(R_d)_a= \{0 \} \cup \{0\neq f \in H^0(C,\OO_C(adP)) \mid -\ord_P(f) \leq b \},
\end{equation*}
with $F_{-1}(R_d)_a = 0$. 

\begin{lemma}\label{lem:pole-order-semigroup}
    There is a bigraded algebra isomorphism
    \begin{equation*}
        \gr_F(R_d) \cong k[\Gamma_d(P)].
    \end{equation*}
    In particular, the bigraded Hilbert function of $\gr_F(R_d)$ is $1$ on $\Gamma_d(P)$ and $0$ elsewhere.
\end{lemma}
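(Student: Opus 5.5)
We prove the isomorphism one bigraded piece at a time, with the pole-order filtration on each $(R_d)_a = H^0(C,\OO_C(adP))$. First, the filtration is exhaustive and multiplicative. Every $f\in H^0(C,\OO_C(adP))$ is regular away from $P$ and has $-\ord_P(f)\leq ad$, so $F_{ad}(R_d)_a=(R_d)_a$. Since $\ord_P(fg)=\ord_P(f)+\ord_P(g)$, we get $F_b(R_d)_a\cdot F_{b'}(R_d)_{a'}\subseteq F_{b+b'}(R_d)_{a+a'}$. Hence
\[
\gr_F(R_d)=\bigoplus_{a,b\geq 0}F_b(R_d)_a/F_{b-1}(R_d)_a
\]
is a bigraded $k$-algebra.

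Next we compute each graded piece. Note $F_b(R_d)_a=H^0(C,\OO_C(\min(b,ad)P))$, viewed inside $H^0(C,\OO_C(adP))$. For $0\leq b\leq ad$, the fact from Section \ref{sec-weighted-presentations} gives
\[
\dim F_b/F_{b-1} = h^0(\OO_C(bP))-h^0(\OO_C((b-1)P)),
\]
which equals $1$ if $b\in H(P)$ and $0$ otherwise. For $b>ad$ the quotient is zero. So the bigraded Hilbert function of $\gr_F(R_d)$ is exactly the indicator function of $\Gamma_d(P)$.

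We then define the map to $k[\Gamma_d(P)]$. Fix a uniformizer $t$ at $P$. A nonzero $f\in F_b(R_d)_a\setminus F_{b-1}(R_d)_a$ has a well-defined leading coefficient $c(f)\in k^\times$, given by $f=c(f)t^{-b}+(\text{higher order})$. Send the class of $f$ to $c(f)\,s^at^b$, and send the zero class to $0$. On each one-dimensional piece this map is $k$-linear: it is the isomorphism $F_b/F_{b-1}\to k$ obtained by reading off the coefficient of $t^{-b}$, which kills $F_{b-1}$. It is multiplicative because leading coefficients multiply, $c(fg)=c(f)c(g)$. If both classes are nonzero, the product lies in $F_{b+b'}\setminus F_{b+b'-1}$ since orders add, so there are no zero divisors to worry about. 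The map is bijective on every bigraded piece, because both sides are one-dimensional exactly on $\Gamma_d(P)$ and zero elsewhere.

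The one point needing care is the well-definedness of the leading-coefficient map. It is a priori tied to the choice of uniformizer $t$, and we must check it is compatible with the filtration quotient. It is compatible: the coefficient of $t^{-b}$ is additive on $F_b$ and vanishes on $F_{b-1}$. Changing $t$ to $ut$ with $u(P)=\lambda\neq 0$ only rescales $s^at^b$ by $\lambda^{-b}$, which is a bigraded automorphism of $k[\Gamma_d(P)]$. So no genuine obstacle remains, and the isomorphism $\gr_F(R_d)\cong k[\Gamma_d(P)]$ follows.
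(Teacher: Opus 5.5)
Your proposal is correct and follows essentially the same argument as the paper: each piece $F_b(R_d)_a/F_{b-1}(R_d)_a$ is one-dimensional exactly when $(a,b)\in\Gamma_d(P)$, and the leading-coefficient map with respect to a fixed uniformizer at $P$ gives a multiplicative isomorphism in each bidegree. Your explicit dimension count via $h^0(\mathcal{O}_C(bP))-h^0(\mathcal{O}_C((b-1)P))$ only spells out what the paper states in one line. The remark about changing the uniformizer is not needed, since one fixed choice of $t$ suffices.
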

\begin{proof}
    The $(a,b)$-graded piece of $\gr_F(R_d)$ is $F_b(R_d)_a/F_{b-1}(R_d)_a$. It is one-dimensional precisely when there is a rational function regular away from $P$ with pole order exactly $b$ and $b\leq ad$, equivalently when $(a,b)\in \Gamma_d(P)$, and zero-dimensional otherwise.

    Let $\chi^{(a,b)}$ be the basis element of $k[\Gamma_d(P)]$ corresponding to $(a,b)$. Choose a uniformizer $t$ at $P$. If $[f]\neq 0$ in the $(a,b)$-graded piece, write
    \begin{equation*}
        f = c\, t^{-b} + \text{higher-order terms}, \qquad c\in k^\times.
    \end{equation*}
    The map sending $[f]$ to $c\chi^{(a,b)}$ is well defined and gives an isomorphism in each bidegree. Since leading coefficients multiply under products, this gives the required bigraded algebra isomorphism.
\end{proof}

In fact, the Rees algebra of $F$ gives a flat degeneration $R_d \rightsquigarrow k[\Gamma_d(P)]$. This construction is a coarser analogue of Ap\'{e}ry specialization for numerical semigroup algebras; cf. \cite{braun2025minimal}. We will sometimes refer to the grading on $R_d$ coming from its decomposition $R_d = \bigoplus_{m\geq 0}H^0(C,\OO_C(mdP))$ as the \emph{section grading} and the filtration degree coming from $F_\bullet(R_d)_a$ as the \emph{pole grading} on $k[\Gamma_d(P)]$.

\begin{remark}\label{rem:pole-order-degeneration}
    Lemma \ref{lem:pole-order-semigroup} above can similarly be obtained using the framework of \cites{kaveh2012newton,kaveh2019khovanskii}.
    The pole-order filtration is induced by the valuation $\nu_P=-\ord_P$ which has one-dimensional leaves. Recording section degree together with pole order gives the value semigroup $\Gamma_d(P)$, and the one-dimensional leaves of $\nu_P$ yield the semigroup degeneration above. They further compute an associated Newton--Okounkov body which in our case only records the degree of $\OO_C(dP)$ and is too coarse for syzygetic information.
\end{remark}

Combining this degeneration with Proposition \ref{prop:apery labeling}, we see that $\gr_F(R_d)$ is a finite free module over $k[\chi^\alpha,\chi^\beta]$. The two extremal semigroup elements $\alpha$ and $\beta$ now determine the regular sequence used in the Artinian reduction.

\subsection{Artinian reduction}
In every degree $m$, the pole orders of sections of $\OO_C(mdP)$ lie between $0$ and $md$. The semigroup points $\alpha$ and $\beta$ lift to sections $x$ and $z$: denote by $x$ the image of 1 under the natural inclusion $k = H^0(C,\OO_C)\subset H^0(C,\OO_C(dP))$, and let $z\in(R_d)_{q_d}$ correspond to a rational function in $H^0(C,\OO_C(e_dP))$ with pole order exactly $e_d$ at $P$.

\begin{proposition}\label{prop: artinian-reduction}
    One can always choose a minimal weighted presentation
    \begin{equation*}
    S_d = k[\tilde{x},\tilde{z},y_1,\dots, y_s] \surj R_d, \qquad \deg \tilde{x} = 1, \quad \deg \tilde{z}=q_d, \quad \deg y_a=v_a,
\end{equation*}
sending $\tilde{x}$ to $x$ and $\tilde{z}$ to $z$, where $v_1\leq \dots \leq v_s$. Furthermore, setting $T = k[y_1,\dots, y_s]$ and $A_d = R_d/(x,z)$,
\begin{enumerate}
    \item The elements $x,z$ form an $R_d$-regular sequence, $A_d$ has finite length, and
    \begin{equation*}
        \beta_{i,j}^{S_d}(R_d) = \beta_{i,j}^{T}(A_d)
    \end{equation*}
    for all $i,j$.
    \item If $\mf n=(R_d)_+$ and $\mf a=(A_d)_+$, then
    \begin{equation*}
        \frac{\mf a}{\mf a^2}
        \cong
        \frac{\mf n}{\mf n^2+(x,z)}.
    \end{equation*}
    In particular, the images of $y_1,\dots,y_s$ form a homogeneous $k$-basis of $\mf a/\mf a^2$.
\end{enumerate}
\end{proposition}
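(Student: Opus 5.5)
The plan is first to show that $x$ and $z$ can be taken as part of a minimal generating set, and then to read off (a) and (b) from the regular-sequence structure.

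\emph{Step 1: $x$ and $z$ are minimal generators.} It suffices to show that the images of $x$ and $z$ in $\mf n/\mf n^2$ are linearly independent. Then a homogeneous basis of $\mf n/\mf n^2$ can be chosen to contain them, and the lifts of the remaining basis vectors are the $y_a$.
\begin{itemize}
\item $x$ has degree $1$ and is nonzero, so it is not in $\mf n^2$, since $(\mf n^2)_1=0$.
\item For $z$ we use the pole filtration. Every element of $\mf n^2$ in degree $q_d$ is a sum of products $fg$ with $\deg f=a\ge1$, $\deg g=q_d-a\ge1$.
\item Such a product has pole order $\nu(f)+\nu(g)$. Here $\nu(f)\le ad$ and $\nu(f)\in H(P)$. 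If $\nu(f)=ad$, then $ad\in H(P)$ with $a<q_d$, contradicting minimality of $q_d$. So $\nu(f)<ad$, and likewise for $g$.
\item Hence every element of $(\mf n^2)_{q_d}$ has pole order $<q_dd=e_d$. Multiples of $x$ in degree $q_d$ also have pole order $\le (q_d-1)d<e_d$.
\item So $z\notin \mf n^2+(x)$, because $\nu(z)=e_d$. This gives the independence of $x$ and $z$ modulo $\mf n^2$. In the case $q_d=1$, $z$ and $x$ are independent in degree $1$ by their differing pole orders.
\end{itemize}

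\emph{Step 2: regularity of $x,z$ on $R_d$.} Lemma \ref{lem:pole-order-semigroup} says $\gr_F(R_d)\cong k[\Gamma_d(P)]$, with the initial forms of $x$ and $z$ equal to $\chi^\alpha$ and $\chi^\beta$. Proposition \ref{prop:apery labeling} says $\chi^\alpha,\chi^\beta$ is a regular sequence on this ring. The filtration is exhaustive and bounded below in each section degree, so the standard lifting argument applies: if $\mathrm{in}(x)$ is a nonzerodivisor on $\gr R_d$, then $x$ is a nonzerodivisor on $R_d$ and $\gr(R_d/x)\cong\gr R_d/(\chi^\alpha)$ for the induced filtration. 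Repeating with $z$ shows $x,z$ is $R_d$-regular.

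Alternatively, $R_d$ is a Cohen--Macaulay domain of dimension $2$ (Proposition \ref{prop: pointed section ring regularity}). Then $x$ is regular, and $z$ is regular on $R_d/x$ as long as $(x,z)$ is $\mf n$-primary, which holds because $x$ and $z$ have no common zero on $C$: $x$ vanishes only at $P$ and $z$ has a pole there. Finite length of $A_d$ follows either way, since $\dim R_d=2$.

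\emph{Step 3: transfer of Betti numbers.} Write $\tilde x,\tilde z$ for the variables of $S_d$ mapping to $x,z$. Since $\tilde x,\tilde z$ are regular on $S_d$ and $x,z$ are regular on $R_d$, we have $\Tor^{S_d}_i(R_d,k)\cong\Tor^{S_d/(\tilde x,\tilde z)}_i(R_d/(x,z),k)$ as graded spaces. Concretely, tensoring a minimal $S_d$-resolution of $R_d$ with $S_d/(\tilde x,\tilde z)=T$ stays exact and minimal, and it resolves $A_d$. This gives (a).

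\emph{Step 4: part (b).} Here $\mf a=\mf n/(x,z)$ and $\mf a^2=(\mf n^2+(x,z))/(x,z)$, so the displayed isomorphism is immediate. The $y_a$ together with $x,z$ span $\mf n/\mf n^2$, so the images of the $y_a$ span $\mf n/(\mf n^2+(x,z))$. By Step 1 and the choice of basis, they are linearly independent there. Hence they form a homogeneous basis of $\mf a/\mf a^2$.

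The main obstacle is Step 1 for $z$: showing that $z$ is not in $\mf n^2+(x)$. This is exactly where the minimality of $q_d$ enters, through the pole-order bound on products.
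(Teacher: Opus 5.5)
Your proposal is correct and follows the paper's proof essentially step for step. The shared steps are the pole-order argument (via minimality of $q_d$) showing that $x,z$ are independent in $\mf n/\mf n^2$, the Cohen--Macaulay plus ``no common zero'' argument for regularity, tensoring a minimal resolution with $T$, and the identification $\mf a/\mf a^2\cong \mf n/(\mf n^2+(x,z))$.

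Your extra route to regularity, lifting the regular sequence $\chi^\alpha,\chi^\beta$ from $\gr_F(R_d)\cong k[\Gamma_d(P)]$, is also valid. Its mild advantage is that it does not need Proposition \ref{prop: pointed section ring regularity}, and in fact it reproves that $R_d$ is Cohen--Macaulay.
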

\begin{proof}
    We first show that we can always include $x$ and $z$ in a minimal generating set for $R_d$. Let $\mf{n} = (R_d)_+$. By graded Nakayama's lemma, a collection of homogeneous elements can be included in a minimal homogeneous generating set for $R_d$ if and only if their images in $\mf{n}/\mf{n}^2$ are linearly independent. It therefore suffices to check this for $x$ and $z$.

    Set $q=q_d$. Since $(\mf{n}^2)_1=0$, the element $x$ has nonzero image in $\mf{n}/\mf{n}^2$. If $q=1$, then $x,z\in (R_d)_1$, and their images are linearly independent because they have distinct pole orders $0$ and $d$.
    Suppose now that $q>1$. Every element of $(\mf{n}^2)_q$ is a $k$-linear combination of products $g_1g_2$ where $g_i$ has positive degree $a_i<q$ and $a_1+a_2=q$. By the minimality of $q$, one has $a_id \notin H(P)$, so every nonzero element of $(R_d)_{a_i}$ has pole order strictly less than $a_id$. Every element of $(\mf{n}^2)_q$ therefore has pole order strictly less than $qd=e_d$, whereas $z$ has pole order $e_d$. Thus the image of $z$ is nonzero. It lies in a different graded component from the image of $x$, so the two images are linearly independent.

    \textit{(a)} The section $x$ vanishes exactly at $P$ with order $d$, while $z$ has pole order exactly $e_d$ at $P$ and hence does not vanish there as a section of $\OO_C(e_dP)$. Thus $x$ and $z$ have no common zero on $C\cong \Proj R_d$. It follows that $R_d/(x,z)$ has finite length. Since $R_d$ is Cohen--Macaulay of dimension $2$ by Proposition \ref{prop: pointed section ring regularity}, the pair $x,z$ is a homogeneous system of parameters and hence a regular sequence.

    The Betti number comparison follows by tensoring a minimal $S_d$-resolution $G_\bullet$ of $R_d$ with $T=S_d/(\tilde{x},\tilde{z})$. The groups $\Tor_p^{S_d}(R_d,T)$ are the Koszul homology groups of $x,z$ with coefficients in $R_d$, so they vanish for $p>0$. Therefore $G_\bullet \otimes_{S_d} T$ resolves $A_d$ over $T$. It remains minimal because its differentials have entries in $(y_1,\dots, y_s)$, proving the equality of Betti numbers.

    \textit{(b)} Since $(x,z)\subseteq\mf{n}$, one has
    \begin{equation*}
        \mf{a}=\frac{\mf{n}}{(x,z)} \qquad\text{and}\qquad \mf{a}^2=\frac{\mf{n}^2+(x,z)}{(x,z)},
    \end{equation*}
    which gives the displayed isomorphism. The classes of the images of $\tilde{x},\tilde{z},y_1,\dots,y_s$ form a homogeneous basis of $\mf{n}/\mf{n}^2$ because the presentation $S_d\surj R_d$ is minimal. Quotienting by the classes of $x$ and $z$ proves the final assertion.
\end{proof}

\subsection{Bounds on minimal algebra generators}
The pole-order filtration on $R_d$ induces a filtration on the quotient $A_d=R_d/(x,z)$. Its associated graded algebra is identified in Proposition \ref{prop:apery-degeneration} with the corresponding quotient of the semigroup algebra.

\begin{proposition}\label{prop:apery-degeneration}
    Let $A_d$ carry the filtration $F_\bullet$ induced by the pole-order filtration on $R_d$. Then
    \begin{equation*}
        \gr_F(A_d) \cong \frac{k[\Gamma_d(P)]}{(\chi^{\alpha},\chi^{\beta})}.
    \end{equation*}
    A monomial basis is indexed by $\{(\lceil n/d\rceil,n) \mid n\in \Ap(H(P),e_d)\}$. This basis has exactly $e_d$ elements and
    \begin{equation*}
        H_{A_d}(t) = \sum_{n\in \Ap(H(P),e_d)} t^{\lceil n/d\rceil}.
    \end{equation*}
    Consequently, $\operatorname{length}(A_d) = e_d$ and the top nonzero degree of $A_d$ is $q_d + \lceil F(P)/d \rceil$.
\end{proposition}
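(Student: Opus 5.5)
The plan is to transport the filtration from $R_d$ to $A_d=R_d/(x,z)$ and show that taking associated graded objects commutes with the quotient. This hinges on $\chi^\alpha,\chi^\beta$ being the initial forms of $x,z$ and forming a regular sequence on $\gr_F(R_d)$. Concretely, $x$ has section degree $1$ and pole order $0$, so its initial form in $\gr_F(R_d)\cong k[\Gamma_d(P)]$ (Lemma \ref{lem:pole-order-semigroup}) is a nonzero multiple of $\chi^\alpha$. Likewise $z$ has section degree $q_d$ and pole order exactly $e_d$, so its initial form is a multiple of $\chi^\beta$. The induced filtration on $A_d$ is $F_b(A_d)_a=\bigl(F_b(R_d)_a+(x,z)_a\bigr)/(x,z)_a$. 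There is a natural surjection
\begin{equation*}
  k[\Gamma_d(P)]/(\chi^\alpha,\chi^\beta)\cong \gr_F(R_d)/(\operatorname{in}x,\operatorname{in}z)\surj \gr_F(A_d),
\end{equation*}
and it is an isomorphism if and only if $\operatorname{in}\bigl((x,z)\bigr)=(\operatorname{in}x,\operatorname{in}z)$, i.e.\ the initial forms generate the initial ideal.

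The key step, and the main obstacle, is this equality of initial ideals. I would prove it via the standard criterion that a regular sequence of initial forms lifts. Suppose $f=ux+vz$ with $\nu(f)<\max(\nu(ux),\nu(vz))$, where $\nu$ is pole order, so the top terms cancel. Then $\operatorname{in}(u)\chi^\alpha+\operatorname{in}(v)\chi^\beta=0$ in the top pole degree. Since $\chi^\alpha,\chi^\beta$ is a regular sequence on $k[\Gamma_d(P)]$ (Proposition \ref{prop:apery labeling}), the Koszul relation gives $\operatorname{in}(u)=w\chi^\beta$ and $\operatorname{in}(v)=-w\chi^\alpha$. Lifting $w$ to $\tilde w\in R_d$, we replace $u,v$ by $u-\tilde wz$, $v+\tilde wx$, which strictly lowers the maximal pole order. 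The filtration in each section degree is finite ($0\le b\le ad$), so the process terminates and yields $\operatorname{in}(f)\in(\chi^\alpha,\chi^\beta)$. A cleaner alternative is the Rees-algebra flat degeneration, but I would write out the descending induction directly.

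For the Hilbert function, the basis statement follows from Proposition \ref{prop:apery labeling}. The quotient $k[\Gamma_d(P)]/(\chi^\alpha,\chi^\beta)$ has monomial basis $\chi^{(\lceil n/d\rceil,n)}$ for $n\in\Ap(H(P),e_d)$, and Lemma \ref{lem:apery-facts} gives $e_d$ of them. Since passing to $\gr$ preserves the section-graded Hilbert function (the filtration is finite in each degree), we get $H_{A_d}(t)=\sum_n t^{\lceil n/d\rceil}$ and $\operatorname{length}(A_d)=e_d$. As a check, $e_d$ also matches $\deg(x)\deg(z)\cdot\deg C$ divided out appropriately, via the regular sequence from Proposition \ref{prop: artinian-reduction}.

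Finally, the top degree of $A_d$ is $\max_n\lceil n/d\rceil$ over the Apéry set, and $\lceil n/d\rceil$ is nondecreasing in $n$. Lemma \ref{lem:apery-facts} gives $\max\Ap(H(P),e_d)=F(P)+e_d$, so the top degree is $\lceil (F(P)+q_dd)/d\rceil=q_d+\lceil F(P)/d\rceil$.
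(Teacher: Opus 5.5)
Your proof is correct, but the key step, injectivity of $k[\Gamma_d(P)]/(\chi^\alpha,\chi^\beta)\to\gr_F(A_d)$, takes a different route from the paper's. The paper never examines the initial ideal of $(x,z)$. It compares section-graded Hilbert series using three facts:
\begin{itemize}
\item $x,z$ is already known to be $R_d$-regular (Proposition \ref{prop: artinian-reduction}(a), which rests on the Cohen--Macaulayness in Proposition \ref{prop: pointed section ring regularity});
\item $\chi^\alpha,\chi^\beta$ is regular on $k[\Gamma_d(P)]$;
\item $R_d$ and $k[\Gamma_d(P)]$ have the same Hilbert series.
\end{itemize}
So source and target both have Hilbert series $(1-t)(1-t^{q_d})H_{R_d}(t)$, and the surjection is an isomorphism by counting dimensions.

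You instead prove directly that $\operatorname{in}\bigl((x,z)\bigr)=(\operatorname{in}x,\operatorname{in}z)$, i.e.\ that $\{x,z\}$ is a standard basis for $(x,z)$ with respect to $\nu_P$, via the syzygy-lifting descent. That argument is sound. It uses only that the initial forms are regular on $\gr_F(R_d)$, so it does not need Cohen--Macaulayness of $R_d$. By the usual lifting principle it in fact reproves that $x,z$ is $R_d$-regular.

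The paper's route is a two-line count once those earlier results are in hand. Yours is longer but self-contained and yields the finer standard-basis statement.

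Two minor points. When writing out the descent, carry the unit $c$ with $\operatorname{in}z=c\chi^\beta$, so the replacements are $u-c^{-1}\tilde w z$ and $v+c^{-1}\tilde w x$; this is harmless since each bigraded piece of $k[\Gamma_d(P)]$ is one-dimensional. Your ``as a check'' sentence needs no ``dividing out'': $\deg x\cdot\deg z\cdot\deg\OO_C(dP)=1\cdot q_d\cdot d=e_d$. It plays no role in the argument anyway.
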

\begin{proof}
    By Lemma \ref{lem:pole-order-semigroup}, the initial forms of $x,z$ are $\chi^{(1,0)}$ and $\chi^{(q_d,e_d)}$. Hence there is a natural section-graded surjection from $Q_d \coloneqq k[\Gamma_d(P)]/(\chi^{\alpha},\chi^{\beta})$ to $\gr_F(A_d)$. The source $Q_d$ has the displayed basis by Proposition \ref{prop:apery labeling}. It remains to prove that the surjection is an isomorphism.
    Since $x,z$ form an $R_d$-regular sequence of degrees $1,q_d$, and passing to the associated graded preserves the dimension of each section-graded piece, one has
    \begin{equation*}
        H_{\gr_F(A_d)}(t)=H_{A_d}(t)=(1-t)(1-t^{q_d})H_{R_d}(t).
    \end{equation*}
    Using the fact that $\chi^\alpha,\chi^\beta$ is a regular sequence on $k[\Gamma_d(P)]$ and that $R_d$ and $k[\Gamma_d(P)]$ have the same Hilbert series, it follows that
    \begin{equation*}
        H_{Q_d}(t) = (1-t)(1-t^{q_d})H_{k[\Gamma_d(P)]}(t) = (1-t)(1-t^{q_d})H_{R_d}(t) = H_{\gr_F(A_d)}(t).
    \end{equation*}
    Thus the surjection is an isomorphism. Since the basis element indexed by $n$ has section degree $\lceil n/d\rceil$, the formula for $H_{A_d}(t)$ follows. By Lemma \ref{lem:apery-facts}, $|\Ap(H(P),e_d)|=e_d$, so evaluating at $t=1$ gives $\operatorname{length}(A_d)=e_d$. The same lemma gives $\max\Ap(H(P),e_d)=F(P)+e_d$, so the largest section degree is
    \begin{equation*}
        \left \lceil \frac{F(P)+e_d}{d} \right \rceil = q_d + \left \lceil \frac{F(P)}{d} \right\rceil. \qedhere
    \end{equation*}
\end{proof}

The two degenerations mentioned in Lemma~\ref{lem:pole-order-semigroup} and Proposition~\ref{prop:apery-degeneration} are related by the diagram
\begin{center}
\usetikzlibrary{decorations.pathmorphing}
    \begin{tikzcd}
        R_d \ar[d, "\bmod {(x,z)}"'] \ar[r, squiggly] & \gr_F(R_d)\cong k[\Gamma_d(P)] \ar[d,"\bmod {(\chi^\alpha,\chi^\beta)}"] \\
        A_d \ar[r, squiggly] & \gr_F(A_d)\cong  \frac{k[\Gamma_d(P)]}{(\chi^{\alpha},\chi^{\beta})}.
    \end{tikzcd}
\end{center}
The horizontal arrows are flat degenerations, while the vertical maps are quotients. Proposition~\ref{prop:apery-degeneration} gathers the information needed to bound both the number and the degrees of the minimal algebra generators.

\begin{corollary}\label{cor:minimal-generator-bounds}
    Let $S_d=k[x_0,\dots, x_r] \surj R_d$ be a minimal weighted presentation, with $\deg x_i = w_i$. Then:
    \begin{enumerate}
        \item The presentation has at most $e_d+1$ variables; equivalently, $r\leq e_d$.
        \item Every weight satisfies
        \begin{equation*}
            w_i \leq q_d + \left\lceil \frac{F(P)}{d} \right\rceil.
        \end{equation*}
        \item If $d>F(P)$, then $q_d=1$ and
        \begin{equation*}
            H_{A_d}(t)=1+(d-g-1)t+gt^2.
        \end{equation*}
        In this case, every minimal presentation has exactly $d+1-g$ variables of weight $1$ and no variables of weight greater than $2$.
    \end{enumerate}
\end{corollary}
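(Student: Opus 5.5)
All three parts will be read off from Proposition \ref{prop: artinian-reduction} and Proposition \ref{prop:apery-degeneration}. By Proposition \ref{prop: artinian-reduction}(b), a minimal presentation has $r+1 = 2 + s$ variables, and $s=\dim_k \mf a/\mf a^2$ with $\mf a=(A_d)_+$. The multiset of weights is independent of the chosen minimal presentation, so it suffices to work with the presentation from Proposition \ref{prop: artinian-reduction}.

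\emph{Part (a).} Since $\mf a/\mf a^2$ is a quotient of $\mf a$, we get $s\leq \dim_k\mf a = \operatorname{length}(A_d)-1 = e_d-1$, using $\operatorname{length}(A_d)=e_d$ from Proposition \ref{prop:apery-degeneration}. Hence $r+1\leq e_d+1$, i.e. $r\leq e_d$.

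\emph{Part (b).} The weights of $\tilde x,\tilde z$ are $1$ and $q_d$, both at most the bound. Each $y_a$ maps to a nonzero element of $\mf a/\mf a^2$ of degree $v_a$, so $(A_d)_{v_a}\neq 0$. Proposition \ref{prop:apery-degeneration} says the top nonzero degree of $A_d$ is $q_d+\lceil F(P)/d\rceil$, giving $v_a\leq q_d+\lceil F(P)/d\rceil$.

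\emph{Part (c).} If $d>F(P)$ then $d\in H(P)$, so $q_d=1$ and $e_d=d$. We compute $H_{A_d}(t)=\sum_{n\in\Ap(H(P),d)} t^{\lceil n/d\rceil}$. The Apéry set contains $0$, contributing $1$. Every other element $n$ satisfies $0<n\leq F(P)+d<2d$, so $\lceil n/d\rceil\in\{1,2\}$, and the degree is $1$ exactly when $n\leq d$. The main step is counting these. An element $n\in H(P)$ with $1\leq n\leq d$ lies in the Apéry set unless $n=d$, since for $n<d$ we have $n-d<0$, while $d-d=0\in H(P)$. Because $d>F(P)$, every gap lies in $[1,d-1]$, so the number of elements of $H(P)$ in $[1,d-1]$ is $(d-1)-g$. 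This gives coefficient $d-g-1$ in degree $1$. The remaining coefficient is $d-1-(d-g-1)=g$ in degree $2$, since the total length is $d$. Thus $H_{A_d}(t)=1+(d-g-1)t+gt^2$.

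It remains to read off the variables. Since $\mf a^2$ lives in degrees $\geq 2$, we have $(\mf a/\mf a^2)_1=(A_d)_1$, which has dimension $d-g-1$. Adding the two weight-$1$ variables $\tilde x,\tilde z$ gives exactly $d+1-g$ variables of weight $1$. The bound in (b) is $1+1=2$, so no variable has weight greater than $2$. No step is a real obstacle; the only care needed is the boundary case $n=d$ in the Apéry count.
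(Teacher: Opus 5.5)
Your proposal is correct and follows the paper's proof essentially step by step: you reduce to the presentation from Proposition \ref{prop: artinian-reduction}, bound $s$ by $\dim_k\mf a=e_d-1$, bound the weights by the top degree of $A_d$, and read off the Hilbert series from the Ap\'{e}ry set. The only cosmetic difference is in (c). You get the degree-$2$ coefficient $g$ by subtracting from $\operatorname{length}(A_d)=d$. The paper instead lists the Ap\'{e}ry set explicitly as $\{0\}$, the positive nongaps below $d$, and $\{d+\ell \mid \ell\in G(P)\}$.
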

\begin{proof}
    The multiset of weights in a minimal presentation is the multiset of degrees of a homogeneous basis of $(R_d)_+/(R_d)_+^2$, so it is independent of the chosen presentation. By Proposition \ref{prop: artinian-reduction}, we may therefore work with a minimal weighted presentation
    \begin{equation*}
        k[x,z,y_1,\dots,y_s]\surj R_d,
        \qquad \deg(x)=1,\quad \deg(z)=q_d,\quad \deg(y_i)=v_i,
    \end{equation*}
    in which the images of $y_1,\dots,y_s$ form a basis of $\mf a/\mf a^2$ for $\mf a=(A_d)_+$. Proposition \ref{prop:apery-degeneration} gives $\dim_k\mf a=e_d-1$, and hence
    \begin{equation*}
        s=\dim_k(\mf a/\mf a^2)\leq \dim_k\mf a=e_d-1.
    \end{equation*}
    Thus the presentation has $s+2\leq e_d+1$ variables, proving (a).

    The class of each $y_i$ is nonzero in $\mf a/\mf a^2$, so $(A_d)_{v_i}\neq0$. Since the top nonzero degree of $A_d$ is $q_d+\lceil F(P)/d\rceil$, one has
    \begin{equation*}
        v_i\leq q_d+\left\lceil\frac{F(P)}d\right\rceil.
    \end{equation*}
    The variables $x$ and $z$ have weights $1$ and $q_d$, which satisfy the same bound. This proves (b).

    Since $d>F(P)$, one has $d\in H(P)$ and hence $q_d=1$. The Ap\'{e}ry set with respect to $d$ consists of $0$, the positive nongaps less than $d$, and the elements $d+\ell$ for $\ell \in G(P)$. There are $d-g-1$ positive nongaps less than $d$ and $g$ gaps, so Proposition \ref{prop:apery-degeneration} gives the displayed Hilbert series. The degree-one part of $A_d$ consists entirely of minimal generators, which together with $x,z$ account for $d+1-g$ variables of weight $1$. By (b), all remaining variables have weight $2$, proving (c).
\end{proof}

\subsection{Betti support} We are now ready to prove our bounds on the support of the Betti table of $R_d$.

\begin{proof}[Proof of Theorem \ref{thm: intro-bettiSupport}]
By Proposition \ref{prop: artinian-reduction}(a), $\beta_{i,j}^{S_d}(R_d) = \beta_{i,j}^T(A_d)$, so it suffices to bound the support of $\Tor_i^T(A_d,k)$. This Tor group is computed by $A_d\otimes_T K_\bullet(y_1,\dots,y_s)$, whose $i$-th term is
\begin{equation*}
    \bigoplus_{1\leq a_1 < \cdots < a_i \leq s} A_d(-v_{a_1} - \cdots - v_{a_i}).
\end{equation*}
In particular, $\Tor_i^T(A_d,k)=0$ for $i>s$ and
\begin{equation*}
    \Tor_0^T(A_d,k) = A_d/(y_1,\dots, y_s)A_d \cong k.
\end{equation*}

By Proposition \ref{prop:apery-degeneration}, the algebra $A_d$ is supported only in degrees
\begin{equation*}
    0 \leq m \leq q_d + \lceil F(P)/d \rceil.
\end{equation*}
Therefore the $i$-th term of the Koszul complex is supported only in internal degrees $j$ for which
\begin{equation*}
    v_{a_1} + \cdots + v_{a_i} \leq j \leq v_{a_1} + \cdots + v_{a_i} +q_d + \lceil F(P)/d\rceil
\end{equation*}
for some subset $\{a_1,\dots, a_i\} \subseteq [s]$. The smallest possible sum of $i$ distinct entries of $\bf{v}$ is $\mathbf{v}_i$, and the largest possible sum is $\mathbf{v}^i$. Hence the $i$-th Koszul term (and therefore its homology) is supported in degrees
\begin{equation*}
    \mathbf{v}_i \leq j \leq \mathbf{v}^i + q_d + \lceil F(P)/d\rceil.
\end{equation*}
To improve the lower bound to $\mathbf{v}_i+v_1$, Proposition \ref{prop: artinian-reduction}(b) shows that the images of $y_1,\dots, y_s$ are linearly independent in $A_d$. If $j<\mathbf{v}_i + v_1$, then every element of internal degree $j$ in the $i$-th Koszul term has coefficients in $(A_d)_0 = k$: every $i$-fold wedge has degree at least $\mathbf{v}_i$, while every positive degree in $A_d$ is at least $v_1$. No nonzero such element is a cycle, as the linear independence of $y_1,\dots, y_s$ implies that the Koszul differential out of this term is injective.

Finally, if $2v_1 > q_d + \lceil F(P)/d \rceil$, or if $P$ is ordinary and either $d\mid g$ or $d\mid(g+1)$, then Proposition \ref{prop:square-zero-criteria} gives $\mf a^2=0$. Corollary \ref{cor:square-zero-sharpness} shows that both endpoints of the displayed interval are attained in every homological degree, proving the sharpness assertion.
\end{proof}

\begin{remark}\label{rem:symonds-comparison}
    Applied to $A_d$ over $T$, the regularity bound of Symonds \cite{symonds2011castelnuovo} gives $\beta_{i,j} = 0$ for
    \begin{equation*}
        j > q_d + \left \lceil \frac{F(P)}{d} \right\rceil + \sum_{a=1}^s v_a - (s-i).
    \end{equation*}
    The upper bound in Theorem \ref{thm: intro-bettiSupport} improves this by exactly $\sum_{a=1}^{s-i}(v_a-1)$. On the other hand, the Koszul regularity bound of Brown--Erman \cite{brown2024positivity} gives $\beta_{i,j} = 0$ for
    \[
    j\geq \ww^{i+2}+\left\lceil\frac{F(P)}{d}\right\rceil,
    \]
    which we improve by ${\bf w}^{i+2}-({\bf v}^i + q_d + 1)$.
\end{remark}

\begin{corollary}\label{cor:tight-in-last-column}
    The upper support bound of Theorem \ref{thm: intro-bettiSupport} is attained in the final column of the Betti table. That is, $\beta_{s,\mathbf{v}^s + q_d + \lceil F(P)/d\rceil}^{S_d}(R_d) \neq 0$.
\end{corollary}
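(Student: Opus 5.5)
By Proposition \ref{prop: artinian-reduction}(a), $\beta^{S_d}_{s,j}(R_d)=\beta^T_{s,j}(A_d)$. I will identify the last Tor module with the socle of $A_d$ and then show that the socle reaches the top degree of $A_d$. Since $T=k[y_1,\dots,y_s]$ has $s$ variables, the self-duality of the Koszul complex gives
\begin{equation*}
    \Tor_s^T(A_d,k)\cong H_s(y_1,\dots,y_s;A_d)\cong (0:_{A_d}(y_1,\dots,y_s))(-\mathbf v^s),
\end{equation*}
where $\mathbf v^s=v_1+\cdots+v_s$ is the total degree of the top exterior power. The Koszul $s$-th term is $A_d(-\mathbf v^s)$, and its cycles are exactly the elements annihilated by every $y_a$. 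Hence $\beta^T_{s,j}(A_d)=\dim_k \operatorname{soc}(A_d)_{j-\mathbf v^s}$, where $\operatorname{soc}(A_d)=0:_{A_d}\mathfrak a$. Here I use that $y_1,\dots,y_s$ generate $\mathfrak a$ as an ideal, since their images span $\mathfrak a/\mathfrak a^2$ by Proposition \ref{prop: artinian-reduction}(b) together with graded Nakayama.

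It therefore suffices to show that $\operatorname{soc}(A_d)$ is nonzero in degree $t:=q_d+\lceil F(P)/d\rceil$. By Proposition \ref{prop:apery-degeneration}, $t$ is the top nonzero degree of $A_d$, witnessed by the Ap\'ery element $F(P)+e_d$. Any nonzero $u\in (A_d)_t$ satisfies $y_a u\in (A_d)_{t+v_a}=0$, because every $v_a\geq 1$. So $u$ lies in the socle, and $\beta^T_{s,\mathbf v^s+t}(A_d)\geq \dim_k (A_d)_t\geq 1$, which is the claim.

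The only points needing care are the degree bookkeeping in the identification $H_s\cong (0:\mathfrak a)(-\mathbf v^s)$ and the requirement $v_a\geq 1$. The latter holds because every weight is positive, since $R_d$ is $\N$-graded with $(R_d)_0=k$. I expect the main obstacle to be mild: checking that $(A_d)_t\neq 0$ rather than merely that $\gr_F(A_d)_t\neq 0$. This follows because the associated graded preserves the dimension of each section-graded piece, as noted in the proof of Proposition \ref{prop:apery-degeneration}.
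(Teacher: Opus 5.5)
Your proof is correct and is essentially the paper's argument. The paper likewise takes a nonzero element of $(A_d)_{q_d+\lceil F(P)/d\rceil}$ in the top Koszul term $A_d(-\mathbf{v}^s)$, notes that every $y_i$ kills it for degree reasons and that there is no incoming differential, and transfers the resulting nonzero $\Tor_s$ class to $S_d$ via Proposition \ref{prop: artinian-reduction}(a). Your socle identification is a repackaging of that cycle/no-boundary observation, and the fact that the $y_a$ generate $\mathfrak a$ is not actually needed.
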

\begin{proof}
Let $D = q_d + \lceil F(P)/d \rceil$. Choose a nonzero $a\in (A_d)_D$ and view it in the last term of the Koszul complex $A_d \otimes_T K_s(y_1,\dots, y_s) \cong A_d(-\mathbf{v}^s)$. For every $i$, the element $y_ia \in (A_d)_{D+v_i}$ is zero because $D$ is the top nonzero degree of $A_d$ by Proposition \ref{prop:apery-degeneration}. Thus $a$ is a cycle. There is no incoming differential to the last Koszul term, so it gives a nonzero class in $\Tor^T_s(A_d,k)_{\mathbf{v}^s+D}$. This in turn gives the nonvanishing over $S_d$ by Proposition \ref{prop: artinian-reduction}(a).
\end{proof}

The preceding results show that the variable weights and the degrees of the syzygies are closely related: both are controlled by the quantity $q_d + \lceil F(P)/d\rceil$ coming from the extremal ray of the semigroup $\Gamma_d(P)$. To obtain a weighted $N_p$-type result, we need to bound the degrees of the syzygies entirely in terms of the degrees of the variables.
When $d>F(P)$, the Hilbert function of $A_d$ allows us to do exactly that.

\begin{corollary}\label{cor:weighted Np}
    Let $C$ be a smooth curve of genus $g\geq 1$ and $P\in C$.
    If $d>F(P)$ and $g-1-\binom{d-g}{2}\geq 0$, then $R_d$ satisfies the weighted $N_{g-1-\binom{d-g}{2}}$ condition.
\end{corollary}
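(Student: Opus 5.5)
The plan is to reduce everything to the Artinian algebra $A_d$ and exploit the very simple Hilbert function that Corollary \ref{cor:minimal-generator-bounds}(c) gives when $d>F(P)$.

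\textbf{Setup.} Normal generation is automatic: Proposition \ref{prop: pointed section ring regularity} gives $H^1_\m(R_d)=0$. So only the degree bound on the $F_i$ needs proof.

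Since $d>F(P)$ we have $q_d=1$, and
\[
H_{A_d}(t)=1+(d-g-1)t+gt^2 .
\]
Hence $\vv$ consists of $n_1=d-g-1$ ones, coming from generators $y_1,\dots,y_{n_1}$ spanning $(A_d)_1$, and $b$ twos, coming from generators $u_1,\dots,u_b$. Here $b=g-\dim_k(\mf a^2)_2$. The removed entries of $\ww$ are both $1$, namely $x$ and $z$. Since $(\mf a^2)_2$ is spanned by products of two degree-one elements,
\[
b\ \ge\ g-\binom{n_1+1}{2}\ =\ g-\binom{d-g}{2}\ =\ p+1, \qquad p=g-1-\binom{d-g}{2}.
\]
For $1\le i\le p$ we therefore have $i+1\le b$, so the $i+1$ largest weights of $\ww$ are all $2$ and $\ww^{i+1}=2i+2$. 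By Proposition \ref{prop: artinian-reduction}(a), it suffices to show $\Tor_i^T(A_d,k)_j=0$ for $j>2i+2$ whenever $i\le p$.

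\textbf{Main step.} Note that the bound $\vv^i+q_d+\lceil F(P)/d\rceil=2i+3$ from Theorem \ref{thm: intro-bettiSupport} is off by exactly one. Removing this last degree is the main obstacle, and it requires a finer look at the structure of $A_d$.

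The $u_a$ lie in the top degree $2$ of $A_d$. They span a complement to $(\mf a^2)_2$ and are annihilated by $\mf a$. This gives a short exact sequence of graded $T$-modules
\[
0\to k(-2)^{b}\to A_d\to B\to 0,
\]
where $B=A_d/(u_1,\dots,u_b)$. On $B$ the variables $u_a$ act by zero, and $B$ is a standard graded quotient of $T'=k[y_1,\dots,y_{n_1}]$ living in degrees $0,1,2$.

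\textbf{Bounding the two ends of the sequence.}
\begin{itemize}
\item Since $B$ is Artinian with top degree $2$ over a standard graded ring, it has regularity at most $2$. Hence $\Tor_l^{T'}(B,k)$ lives in degrees $\le l+2$.
\item Because the $u_a$ act by zero, $\Tor^T(B,k)\cong \Tor^{T'}(B,k)\otimes_k\Lambda^\bullet\big(k(-2)^b\big)$. So $\Tor_i^T(B,k)$ is concentrated in degrees $\le (i-l)+2+2l\le 2i+2$.
\item $\Tor_i^T(k(-2)^b,k)$ is a Koszul term, living in degrees $\le 2+\vv^i$. For $i\le b$ this equals $2i+2$.
\end{itemize}

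\textbf{Conclusion.} The long exact sequence in $\Tor$ then places $\Tor_i^T(A_d,k)$ in degrees $\le 2i+2=\ww^{i+1}$ for all $1\le i\le p$. This proves the weighted $N_p$ condition.

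Two points deserve a careful check:
\begin{itemize}
\item the tensor decomposition of $\Tor^T(B,k)$ over $T=T'[u_1,\dots,u_b]$, which holds since $T$ is a polynomial extension of $T'$ and $B$ is a $T'$-module with the $u_a$ acting trivially;
\item the inequality $i+1\le b$, which is exactly where the hypothesis $g-1-\binom{d-g}{2}\ge 0$ and the bound $\dim_k(\mf a^2)_2\le\binom{d-g}{2}$ enter.
\end{itemize}
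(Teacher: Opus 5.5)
Your argument is correct, and its setup matches the paper's proof: the Hilbert function $1+(d-g-1)t+gt^2$, the count $b\geq g-\binom{d-g}{2}=p+1$ giving $\ww^{i+1}=2i+2$ for $i\le p$, the reduction to $A_d$, and normal generation from Cohen--Macaulayness.

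You diverge at the ``main step'', and that divergence comes from an arithmetic slip. Since $1\le g\le F(P)<d$, one has $q_d=1$ and $\lceil F(P)/d\rceil=1$. So the upper bound of Theorem~\ref{thm: intro-bettiSupport} is $\vv^i+2=2i+2$, not $2i+3$. It already equals $\ww^{i+1}$, and the paper simply invokes it at this point.

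Your own observation that $A_d$ has top degree $2=q_d+\lceil F(P)/d\rceil$ shows this directly. The $i$-th Koszul term $\bigoplus A_d(-v_{a_1}-\cdots-v_{a_i})$ lives in degrees $\le \vv^i+2$.

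Your refinement is nonetheless valid. The span of the $u_a$ is a $T$-submodule killed by all variables, because $A_d$ vanishes above degree $2$. The K\"unneth splitting $\Tor^T(B,k)\cong\Tor^{T'}(B,k)\otimes_k\Lambda^\bullet(k(-2)^b)$ holds, and $\reg B\le 2$. So the proof goes through.

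However, this detour only reproduces the bound $\vv^i+2$ that the Koszul complex on $A_d$ gives immediately. It adds nothing beyond Theorem~\ref{thm: intro-bettiSupport}: the paper's route is a one-line application of that theorem, while yours re-derives the same degree bound from scratch. You can delete the short exact sequence step entirely.
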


\begin{proof}
Set $r = d-g$ and $p=g-1-\binom{r}{2}$; since $F(P)\geq g$ for any $P$, $r\geq 1$. Since $d>F(P)$, Corollary \ref{cor:minimal-generator-bounds} gives
\[
    H_{A_d}(t)=1+(r-1)t+gt^2.
\]
Let $\mf a=(A_d)_+$, and let $b$ be the number of weight-$2$ variables in a minimal presentation of $R_d$. By Proposition \ref{prop: artinian-reduction}(b), $b=\dim_k(\mf a/\mf a^2)_2$.
Multiplication in $A_d$ gives a map
\[
    \mu\colon \Sym^2\bigl((A_d)_1\bigr)\longrightarrow (A_d)_2,
\]
and hence
\[
\begin{aligned}
    b
      &=\dim_k(A_d)_2-\rank(\mu)\\
      &\geq g-\dim_k\Sym^2\bigl((A_d)_1\bigr)\\
      &=g-\binom{r}{2}
       =p+1.
\end{aligned}
\]

Corollary \ref{cor:minimal-generator-bounds} also shows that the full weight vector and the weight vector remaining after the two distinguished weight-$1$ variables are removed have the forms
\[
    \ww=(1^{r+1},2^b)
    \qquad\text{and}\qquad
    \vv=(1^{r-1},2^b).
\]
For every $1\leq i\leq p$, the inequality $b\geq p+1$ gives
\[
    \vv^i=2i
    \qquad\text{and}\qquad
    \ww^{i+1}=2(i+1)=\vv^i+2.
\]
Because $d>F(P)$, one has $q_d=1$ and $\lceil F(P)/d\rceil=1$. Theorem \ref{thm: intro-bettiSupport} now yields
\[
    \beta^{S_d}_{i,j}(R_d)=0
    \qquad\text{for }j>\vv^i+2=\ww^{i+1}
\]
for every $1\leq i\leq p$. Finally, $R_d$ is Cohen--Macaulay by Proposition \ref{prop: pointed section ring regularity}, so it is normally generated. This is precisely weighted $N_p$.
\end{proof}

Corollary \ref{cor:weighted Np} says that in the strictly ample regime, the section ring satisfies \emph{higher} weighted $N_p$ conditions as the degree of the line bundle \emph{decreases} towards $F(P)$, inverting the picture of the classical theory. Note that when $d>F(P)$, we have $\ww = (1^{r+1},2^b)$ for some $b$, and this corollary says that in particular $R_d$ satisfies weighted $N_{b-1}$.

\begin{example}\label{ex:weighted-Np-genus-4}
Continuing Example \ref{ex:betti tables of genus 4 space curve}, Corollary~\ref{cor:weighted Np} says that $R_5$ satisfies weighted $N_3$ and that $R_6$ satisfies weighted $N_2$. In this case these conclusions are optimal, in that $R_5$ does not satisfy weighted $N_4$ and $R_6$ does not satisfy weighted $N_3$. This can be seen in Figure \ref{fig:betti-genus-4-CI} since $\beta_{4,10}^{S_5}(R_5) \neq 0$, while $\ww^5 = 9$; similarly, $\beta_{3,8}^{S_6}(R_6) \neq 0$, while $\ww^4=7$.
\end{example}

\section{Square-zero reductions}\label{sec:square-zeroreduction}
The degenerations in Section \ref{sec:artinian-reduction} do not in general determine the Betti numbers.
In this section, we give sufficient conditions to guarantee that they do.
We keep the notation of the previous section:
\begin{equation*}
    S_d=k[x,z,y_1,\dots, y_s] \surj R_d, \quad T = k[y_1,\dots, y_s], \quad A_d = R_d/(x,z), \quad\mf{a} = (A_d)_+
\end{equation*}
where
\begin{equation*}
    \deg x = 1, \qquad \deg z = q_d, \qquad \deg y_a = v_a, \qquad v_1\leq \cdots \leq v_s.
\end{equation*}

Since $A_d$ has finite length, $\mf{a}^m = 0$ for some $m$. When that $m$ is equal to $2$, in other words the length of $A_d$ is the smallest possible, $A_d$ has a particularly simple description and thus the Betti numbers of $R_d$ can be described exactly.

\begin{proposition}\label{prop:square-zero-Ad}
    If the maximal ideal $\mf{a}$ of $A_d$ satisfies $\mf{a}^2=0$, then $A_d \cong T/(y_1,\dots, y_s)^2$ and thus for all $i,j$,
    \begin{equation*}
        \beta^{S_d}_{i,j}(R_d) = \beta^T_{i,j}(T/(y_1,\dots, y_s)^2).
    \end{equation*}

\end{proposition}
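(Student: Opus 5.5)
The plan is to identify $A_d$ as a $T$-algebra with $T/\mathfrak{m}_T^2$, where $\mathfrak{m}_T=(y_1,\dots,y_s)$, and then read off the Betti numbers from Proposition \ref{prop: artinian-reduction}(a).

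\emph{Step 1: $A_d$ is generated by the $y_i$.} By Proposition \ref{prop: artinian-reduction}, the composite $T\to S_d\surj R_d\to A_d$ is surjective. Indeed, $S_d\surj R_d$ is surjective, and $\tilde x,\tilde z$ map to $x,z$, which die in $A_d$. So $A_d\cong T/J$ for a homogeneous ideal $J$, and $\mf a$ is the image of $\mathfrak{m}_T$.

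\emph{Step 2: the hypothesis gives $\mathfrak{m}_T^2\subseteq J$.} Since $\mf a^2=0$ and $\mf a$ is the image of $\mathfrak{m}_T$, the image of $\mathfrak{m}_T^2$ is $\mf a^2=0$. This yields a surjection $T/\mathfrak{m}_T^2\surj A_d$.

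\emph{Step 3: equality by a dimension count.} The source has $k$-basis $1,y_1,\dots,y_s$. On the target side, Proposition \ref{prop: artinian-reduction}(b) says the images of the $y_i$ form a basis of $\mf a/\mf a^2=\mf a$. Hence $\dim_k A_d=1+s$, which matches the source. Equivalently, the degree-$(0,1)$ parts map isomorphically, so $J\subseteq \mathfrak{m}_T^2$. Either way the surjection is an isomorphism of graded $T$-modules, and in particular it preserves the grading $\deg y_a=v_a$.

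\emph{Step 4: conclude.} Proposition \ref{prop: artinian-reduction}(a) gives $\beta^{S_d}_{i,j}(R_d)=\beta^T_{i,j}(A_d)$. By Step 3 this equals $\beta^T_{i,j}(T/(y_1,\dots,y_s)^2)$.

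There is no real obstacle. The only point needing care is Step 3: we must use that the $y_i$ remain linearly independent in $A_d$, which is exactly part (b) of Proposition \ref{prop: artinian-reduction}, so that no linear relation among the $y_i$ lies in $J$.
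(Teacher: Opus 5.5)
Your proposal is correct and follows essentially the same route as the paper: the images of $y_1,\dots,y_s$ form a basis of $\mf a/\mf a^2=\mf a$ by Proposition~\ref{prop: artinian-reduction}(b), so $A_d$ has basis $1,\overline{y}_1,\dots,\overline{y}_s$ with all products zero, giving $A_d\cong T/(y_1,\dots,y_s)^2$, and the Betti equality then follows from Proposition~\ref{prop: artinian-reduction}(a). Your explicit surjection $T/\mathfrak{m}_T^2\surj A_d$ followed by a dimension count is just a slightly more detailed write-up of the paper's one-line identification.
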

\begin{proof}
    Since $T \surj A_d$ is obtained from a minimal weighted presentation of $R_d$ after quotienting by $x,z$, the images of $y_1,\dots, y_s$ form a $k$-basis of $\mf{a}/\mf{a}^2$. If $\mf{a}^2=0$ then they form a $k$-basis for all of $\mf{a}$. Thus $A_d$ has basis $1,\overline{y}_1,\dots, \overline{y}_s$ and all products $\overline{y}_a\overline{y}_b$ vanish. Therefore
    \begin{equation*}
        A_d\cong T/(y_1,\dots, y_s)^2,
    \end{equation*}
    and the Betti equality follows directly from Proposition \ref{prop: artinian-reduction}(a).
\end{proof}

We can also give several criteria for when this phenomenon occurs.
\begin{proposition}\label{prop:square-zero-criteria}
    The square-zero condition can be detected in the following cases:
    \begin{enumerate}
        \item $\mf a^2 = 0$ if and only if $s=e_d-1$, that is, $\vv$ has maximal possible length.
        \item If $2v_1>q_d + \lceil F(P)/d\rceil$, then $\mf a^2 = 0$.
        \item If $P$ is ordinary, then $\mf a^2 = 0$ if and only if $d~|~g$ or $d~|~(g+1)$.
    \end{enumerate}
\end{proposition}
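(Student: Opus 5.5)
For (a) I will compare $s=\dim_k(\mf a/\mf a^2)$ with $\dim_k\mf a$. By Proposition \ref{prop:apery-degeneration}, $\operatorname{length}(A_d)=e_d$, so $\dim_k\mf a=e_d-1$. By Proposition \ref{prop: artinian-reduction}(b), $s=\dim_k\mf a/\mf a^2=\dim_k\mf a-\dim_k\mf a^2$. Hence $s=e_d-1$ if and only if $\dim_k\mf a^2=0$.

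For (b), set $D=q_d+\lceil F(P)/d\rceil$. By Proposition \ref{prop:apery-degeneration}, $D$ is the top nonzero degree of $A_d$. The ideal $\mf a$ is generated by the images of $y_1,\dots,y_s$, which all have degree at least $v_1$. So $\mf a^2$ is spanned by products of elements of degree $\geq v_1$, and therefore lives in degrees $\geq 2v_1>D$. Since $A_d$ vanishes in those degrees, $\mf a^2=0$.

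For (c), assume $P$ is ordinary, so $H(P)=\{0\}\cup\{g+1,g+2,\dots\}$ and $F(P)=g$. I will first compute the Hilbert function of $A_d$ from the Apéry set, using Proposition \ref{prop:apery-degeneration}. By (a), $\mf a^2=0$ is equivalent to every positive-degree basis element of $A_d$ being a minimal generator. I plan to split into cases according to $q_d$.

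\emph{Case $d\geq g+1$.} Then $q_d=1$, $e_d=d$, and the Apéry set is $\{0\}\cup\{g+1,\dots,d-1\}\cup\{d+1,\dots,d+g\}$. This gives $H_{A_d}(t)=1+(d-g-1)t+gt^2$. Here $\mf a^2=(\mf a_1)^2$ is the image of $\Sym^2(A_d)_1$ in $(A_d)_2$. When $d=g+1$ we have $(A_d)_1=0$, so $\mf a^2=0$, matching $d\mid g+1$. When $d=g+2$, I need to show that the square of the single degree-one element is nonzero. When $d>g+2$, I need $\mf a_1^2\neq 0$. Note that $d\mid g$ and $d\mid(g+1)$ both fail in these cases (for $g\ge1$), so this is exactly what the criterion predicts. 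To prove nonvanishing I will use the pole-order degeneration: $(A_d)_1$ contains a class of pole order $g+1$ (in section degree $1$), and its square has pole order $2g+2$ in section degree $2$. I will check that the lattice point $(2,2g+2)$ is not in $(\alpha,\beta)+\Gamma_d$, i.e.\ that $2g+2-d\notin H(P)$ or that $2g+2>d\cdot 1$ relative to the relevant condition. Concretely, $2g+2-d\le g+1-1$ together with $2g+2-d>0$ puts it in the gap range whenever $g+2\le d<2g+2$. If instead $d\geq 2g+2$, then $\mf a_1$ has at least two elements with pole orders $g+1,g+2$ and their product has pole order $2g+3\le d+g$; I will verify similarly that it survives in $\gr_F(A_d)\cong k[\Gamma_d]/(\chi^\alpha,\chi^\beta)$ (a monomial computation: a monomial is zero iff $b-e_d\in H$ with $a-q_d\ge\lceil (b-e_d)/d\rceil$, or $a>\lceil b/d\rceil$). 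Nonzero initial form implies the product is nonzero in $A_d$.

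\emph{Case $d\leq g$.} Then $e_d$ is the least multiple of $d$ that is $\geq g+1$. Write $g=md+\rho$ with $0\le\rho<d$. If $\rho=0$ then $e_d=g+d$, and if $\rho=d-1$ then $e_d=g+1$. The Apéry set is $\{0\}\cup\{g+1,\dots,g+e_d\}$, so the degrees $\lceil n/d\rceil$ are explicit. The plan is:
\begin{itemize}
\item[(i)] When $d\mid g$ or $d\mid g+1$, every nonzero degree lies in a window $[\delta,2\delta)$ with $2\cdot(\text{min degree})>\text{top degree}$, so (b) applies. For $d\mid g+1$ the min degree is $q_d+1$ and the top degree is $2q_d$ or so; I will verify the inequality numerically.
\item[(ii)] Otherwise, exhibit two elements whose product has nonzero initial monomial in $k[\Gamma_d]/(\chi^\alpha,\chi^\beta)$. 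The natural candidates are the pole orders $g+1$ and $g+1$ (or $g+1$ and $g+2$), whose sum is at most $g+e_d$, using the same monomial-survival test.
\end{itemize}

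The main obstacle is the bookkeeping in (c). Initial forms of products are products of initial forms, so nonvanishing in $\gr$ suffices for nonvanishing in $A_d$. The delicate part is checking the monomial-survival condition uniformly over the residue cases of $g$ modulo $d$, including the section-degree constraint $a=\lceil b/d\rceil$ for the product.
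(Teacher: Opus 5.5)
Your parts (a) and (b), and the forward direction of (c) via the degree window, are the paper's argument. Two slips there. First, for $d\le g$ the Apéry set is $\{0\}\cup\{g+1,\dots,e_d-1\}\cup\{e_d+1,\dots,e_d+g\}$, not $\{0\}\cup\{g+1,\dots,g+e_d\}$: your set contains $e_d$ and has $e_d+1$ elements. This matters when $d\mid(g+1)$. With your set, $n=g+1=e_d$ would give degree $q_d$, and $2\cdot(\text{min degree})>2q_d$ would fail; the minimum degree $q_d+1$ you quote only comes out of the correct set. Second, a monomial survives only if it is divisible by neither $\chi^\alpha$ nor $\chi^\beta$, so your ``or'' should be ``and''.

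The genuine gap is the converse in (c): the witnesses you name do not survive in general.

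\emph{Case $d\ge g+1$.} A product of classes in $(A_d)_1$ with pole orders $b_1,b_2$ has nonzero initial form in $Q_d$ if and only if $d<b_1+b_2\le d+g$. Your pair $g+1,g+2$ has sum $2g+3\le d$ once $d\ge 2g+3$, so your own test ($a>\lceil b/d\rceil$) kills $\chi^{(2,2g+3)}$. In fact the product of lifts lies in $F_d(R_d)_2=x\cdot(R_d)_1$, so it is zero in $A_d$. For example, $g=1$, $d=5$: $2+3=5$ dies.

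\emph{Case $d\le g$.} Write $g=ad+r$ with $1\le r\le d-2$. The square of the pole-order-$(g+1)$ class has nonzero initial form only if $2r+2>d$, and the $(g+1)(g+2)$ product only if $2r+3>d$. For $g=7$, $d=6$: the degree-$2$ pole orders are $8,9,10,11$, and the only degree-$4$ basis element is indexed by $19$. Since $16,17\le 3d=18$, both $8+8$ and $8+9$ give multiples of $x$.

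The missing idea is to pair the smallest and largest Apéry elements below $e_d$:
$\chi^{(q_d,g+1)}\chi^{(q_d,e_d-1)}=\chi^{(2q_d,e_d+g)}$.
\begin{itemize}
\item Both factors are Apéry elements of section degree $q_d$ exactly because $d\nmid(g+1)$, so $g+1<e_d$.
\item The product $e_d+g=\max\Ap(H(P),e_d)$ has section degree $q_d+\lceil g/d\rceil$. This equals $2q_d=2\lceil (g+1)/d\rceil$ exactly because $d\nmid g$.
\end{itemize}
This single witness works for every $d$, including $d\ge g+2$ where $q_d=1$, so the paper needs no case split on $d$ versus $g$.
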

\begin{proof}
    \begin{enumerate}[leftmargin=*]
    \item Proposition \ref{prop:apery-degeneration} gives $\dim_k \mf a = e_d -1$, while Proposition \ref{prop: artinian-reduction}(b) gives $\dim_k(\mf a/\mf a^2) = s$. Hence $\dim_k \mf a^2 = e_d - 1 -s$, proving the equivalence.
    \item The smallest degree of any element of $\mf{a}^2$ is at least $2v_1$, so if $2v_1 > q_d + \lceil F(P)/d\rceil$, then every element of $\mf{a}^2$ has degree strictly larger than $q_d + \lceil F(P)/d\rceil$ and is thus zero by Proposition \ref{prop:apery-degeneration}.
    \item Now, suppose that $P$ is ordinary. Then
    \begin{equation*}
        H(P) = \{0,g+1,g+2,\dots\}, \quad q_d=\left\lceil \frac{g+1}{d} \right\rceil, \quad e_d = dq_d,
    \end{equation*}
    and a direct calculation gives
    \begin{equation*}
        \Ap(H(P),e_d) = \{0\} \cup \{g+1,\dots, e_d-1\} \cup \{e_d+1,\dots, e_d+g\}.
    \end{equation*}
    If $d \mid (g+1)$, write $g+1 = d\cdot b$; since $g+1$ is the smallest positive element of $H(P)$, $b=q_d$ and hence $e_d = g+1$. In this case,
    \begin{equation*}
        \Ap(H(P),e_d) =\{0,e_d+1,\dots,e_d+g\}.
    \end{equation*}
    The smallest and largest positive basis elements of $A_d$ are therefore indexed
    by $e_d+1$ and $e_d+g$ and have degree
    \begin{equation*}
        \left\lceil\frac{e_d+1}{d}\right\rceil=q_d+1 \qquad \text{and} \qquad \left\lceil\frac{e_d+g}{d}\right\rceil=q_d+\left\lceil\frac{g}{d}\right\rceil=2q_d.
    \end{equation*}
    Consequently, every product of two positive-degree elements has degree at least $2q_d+2$, above the top degree of $A_d$, so $\mf a^2=0$. A similar calculation shows that $d\mid g$ implies $\mf a^2 = 0$.

    Conversely, suppose that $d$ divides neither $g$ nor $g+1$. Write $g=ad+r$ with $1\leq r \leq d-2$. Then $q_d=a+1$ and $e_d=(a+1)d$, and inside $Q_d = k[\Gamma_d(P)]/(\chi^\alpha,\chi^\beta)$ we have the nonzero product
    \begin{equation*}
        \chi^{(q_d,g+1)}\chi^{(q_d,e_d-1)} = \chi^{(2q_d,e_d+g)}.
    \end{equation*}
    Since $Q_d \cong \gr_F(A_d)$ by Proposition \ref{prop:apery-degeneration}, we must have $\mf{a}^2\neq 0$.
    \end{enumerate}
\end{proof}

\begin{corollary}\label{cor:square-zero-sharpness}
    Suppose that $\mf a^2=0$. Then, for every $1\leq i\leq s$,
    \begin{equation*}
        \beta^{S_d}_{i,\vv_i+v_1}(R_d)\neq 0
        \qquad\text{and}\qquad
        \beta^{S_d}_{i,\vv^i+q_d+\lceil F(P)/d\rceil}(R_d)\neq 0.
    \end{equation*}
    In particular, both bounds in Theorem \ref{thm: intro-bettiSupport} are sharp in every homological degree.
\end{corollary}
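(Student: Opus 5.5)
The plan is to work entirely with the Artinian reduction. By Proposition \ref{prop:square-zero-Ad} we have $A_d\cong T/(y_1,\dots,y_s)^2$, and by Proposition \ref{prop: artinian-reduction}(a) it suffices to exhibit nonzero classes in $\Tor^T_i(A_d,k)$ in the two internal degrees $\vv_i+v_1$ and $\vv^i+q_d+\lceil F(P)/d\rceil$. Write $W$ for the graded $k$-space with basis $e_1,\dots,e_s$, $\deg e_a=v_a$, and $V=\mf a=\operatorname{span}(\bar y_1,\dots,\bar y_s)$. We compute $\Tor$ via the Koszul complex $A_d\otimes\bigwedge^\bullet W$, exactly as in the proof of Theorem \ref{thm: intro-bettiSupport}.

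First I identify the top degree of $A_d$. Since $\mf a^2=0$, $\mf a$ is spanned by the $\bar y_a$, so the top nonzero degree of $A_d$ is $v_s$. By Proposition \ref{prop:apery-degeneration} this top degree is $D:=q_d+\lceil F(P)/d\rceil$, hence $v_s=D$.

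Next I describe the Koszul homology explicitly. Split each term as $A_d\otimes\bigwedge^iW=(k\otimes\bigwedge^iW)\oplus(V\otimes\bigwedge^iW)$. Because $\mf a^2=0$, the differential kills $V\otimes\bigwedge^iW$ and sends
\[
1\otimes e_J\mapsto\sum_{a\in J}\pm\,\bar y_a\otimes e_{J\setminus a}\in V\otimes\textstyle\bigwedge^{i-1}W .
\]
Identifying $V\cong W(\text{shifted})$ via $\bar y_a\leftrightarrow e_a$, the homology in position $i$ is
\[
\ker\Bigl(\textstyle\bigwedge^iW\to W\otimes\bigwedge^{i-1}W\Bigr)\;\oplus\;\operatorname{coker}\Bigl(\textstyle\bigwedge^{i+1}W\to W\otimes\bigwedge^{i}W\Bigr).
\]
The key combinatorial observation is that every term $e_a\otimes e_K$ appearing in the image of some $e_J$ has $a\notin K$. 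Hence the basis vectors $e_a\otimes e_K$ with $a\in K$ (where $K$ has size $i$) never appear in the image. Consequently each such vector spans a nonzero class in the cokernel, and so gives a nonzero class in $\Tor^T_i(A_d,k)$ of internal degree $v_a+\sum_{b\in K}v_b$. This choice-free argument avoids any characteristic issue.

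Finally I pick extremal choices of $a\in K$. Taking $a=1$ and $K=\{1,\dots,i\}$ gives degree $v_1+\vv_i$. Taking $a=s$ and $K=\{s-i+1,\dots,s\}$ gives degree $v_s+\vv^i=\vv^i+D$. Together with the support bounds of Theorem \ref{thm: intro-bettiSupport}, this gives sharpness in every column $1\le i\le s$.

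The only step needing care is the claim that these classes are not boundaries. The observation about the condition $a\in K$ handles it cleanly, so I do not expect a real obstacle. The identification $v_s=D$ is the one place where the hypothesis $\mf a^2=0$ enters the upper bound.
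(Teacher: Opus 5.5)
Your proposal is correct and is essentially the paper's own proof. Both reduce to the Koszul complex of $A_d\cong T/(y_1,\dots,y_s)^2$ and note that the differential kills the $\mf a$-part while boundaries only involve terms $\bar y_a e_K$ with $a\notin K$; hence the cycles $\bar y_1 e_1\wedge\cdots\wedge e_i$ and $\bar y_s e_{s-i+1}\wedge\cdots\wedge e_s$ are not boundaries, and $v_s=q_d+\lceil F(P)/d\rceil$ follows from $\mf a^2=0$ together with Proposition~\ref{prop:apery-degeneration}. Your kernel--cokernel splitting of the homology is just a more systematic packaging of the same non-boundary argument.
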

\begin{proof}
    There is nothing to prove if $s=0$, so assume $s\geq 1$.
    By Proposition \ref{prop:square-zero-Ad}, it suffices to compute the Koszul homology of
    \begin{equation*}
        A_d\cong T/(y_1,\dots,y_s)^2.
    \end{equation*}
    Let $e_1,\dots,e_s$ be the standard basis of the Koszul complex on $y_1,\dots,y_s$, with $\deg(e_a)=v_a$. Since $\mf a^2=0$, the elements
    \begin{equation*}
        y_1e_1\wedge\cdots\wedge e_i
        \qquad\text{and}\qquad
        y_se_{s-i+1}\wedge\cdots\wedge e_s
    \end{equation*}
    are cycles. They are not boundaries: after writing each coefficient as a scalar plus an element of $\mf a$, the $\mf a$-part has zero differential, and the $e_I$-coefficient of the remaining boundary lies in the span of the $y_a$ with $a\notin I$. In each displayed cycle, by contrast, the coefficient is one of the $y_a$ with $a\in I$. Thus they give nonzero Tor classes in degrees $\vv_i+v_1$ and $\vv^i+v_s$, respectively. Finally, Proposition \ref{prop:apery-degeneration} identifies $q_d+\lceil F(P)/d\rceil$ as the top nonzero degree of $A_d$, while $A_d=k\oplus\langle y_1,\dots,y_s\rangle_k$ in the square-zero case. Hence this top degree is $v_s$, giving the asserted upper endpoint.
\end{proof}

\begin{corollary}\label{cor: pure-resolution}
    In the situation of Proposition \ref{prop:square-zero-Ad}, suppose moreover that $v_1=\cdots = v_s=v$. Then the minimal free resolution of $R_d$ over $S_d$ is pure with Betti numbers
    \begin{equation*}
        \beta_{i,(i+1)v}^{S_d}(R_d) = i \binom{s+1}{i+1}, \quad 1\leq i\leq s.
    \end{equation*}
    For $P$ an ordinary point, the minimal free resolution of $R_{g+1}$ is always pure.
\end{corollary}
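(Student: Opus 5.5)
By Proposition \ref{prop:square-zero-Ad}, we have $\beta^{S_d}_{i,j}(R_d)=\beta^T_{i,j}(T/\mf m_T^2)$, where $\mf m_T=(y_1,\dots,y_s)$ and all $\deg y_a=v$. Rescaling the grading by $v$ makes $T$ standard graded, so the claim reduces to the resolution of $T/\mf m_T^2$ in the standard grading, followed by multiplying internal degrees by $v$.

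For the first step, the ideal $\mf m_T^2$ has a linear resolution; for instance, it is the Eagon--Northcott complex. Hence $T/\mf m_T^2$ has Betti numbers concentrated in $\beta_{i,i+1}$ for $i\geq 1$, and after rescaling these sit in degree $(i+1)v$. This already shows purity. Alternatively, purity follows from Theorem \ref{thm: intro-bettiSupport}. When all weights equal $v$, the lower bound is $\vv_i+v_1=(i+1)v$. The upper bound is $\vv^i+q_d+\lceil F(P)/d\rceil$, and the proof of Corollary \ref{cor:square-zero-sharpness} shows that in the square-zero case the top degree $q_d+\lceil F(P)/d\rceil$ equals $v_s=v$. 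Both bounds therefore equal $(i+1)v$.

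For the numbers, compute $\dim_k \Tor_i^T(A_d,k)_{(i+1)v}$ from the Koszul complex $A_d\otimes K_\bullet$. Take $\mathfrak a=\langle \bar y_1,\dots,\bar y_s\rangle$ with $\mathfrak a^2=0$. In internal degree $(i+1)v$, the $i$-th term is $\mathfrak a\otimes\wedge^i k^s$, of dimension $s\binom{s}{i}$, and it is all cycles. The boundaries are the image of $\wedge^{i+1}k^s\to \mathfrak a\otimes\wedge^i$, and this map is injective since it is the Koszul differential over the polynomial ring in this degree. Hence
\[
\beta_i = s\binom{s}{i}-\binom{s}{i+1} = i\binom{s+1}{i+1},
\]
which is a routine binomial identity to verify.

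For the ordinary case with $d=g+1$, we have $d\mid(g+1)$, so $\mf a^2=0$ by Proposition \ref{prop:square-zero-criteria}(c), with $q_d=1$ and $e_d=g+1$. The Apéry set is $\{0,g+2,\dots,2g+1\}$. Every nonzero element $n$ in it satisfies $\lceil n/(g+1)\rceil=2$, so $A_d$ has Hilbert function $1+gt^2$. Thus $s=g$ and every $v_a=2$, so all weights in $\vv$ are equal, and the first part applies, giving $\beta_{i,2i+2}=i\binom{g+1}{i+1}$.

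The only delicate point is the injectivity of the boundary map. In the Koszul complex of $T$, the degree-one part of $T\otimes\wedge^i$ receives $\wedge^{i+1}$ injectively, since the Koszul complex on a regular sequence is exact in those degrees for $i+1\geq 1$. Quotienting $T$ by $\mf m_T^2$ does not affect degree $\le1$.
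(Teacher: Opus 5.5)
Your proof is correct and follows essentially the same route as the paper. You reduce via Proposition \ref{prop:square-zero-Ad} to $T/(y_1,\dots,y_s)^2$ with all $\deg y_a=v$, get purity from the linear (Eagon--Northcott) resolution, and handle the ordinary case $d=g+1$ via Proposition \ref{prop:square-zero-criteria}(c) together with the Hilbert function $1+gt^2$ of $A_{g+1}$.

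The only differences are minor. You obtain the ranks $i\binom{s+1}{i+1}$ by a direct Koszul count (cycles $s\binom{s}{i}$ minus boundaries $\binom{s}{i+1}$) instead of quoting the ranks of the Eagon--Northcott complex. You also read the Hilbert function off the Ap\'ery set rather than citing Corollary \ref{cor:minimal-generator-bounds}(c). Your count holds in any characteristic, and the same injectivity argument in internal degree $iv$ would make the Eagon--Northcott citation unnecessary.
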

\begin{proof}
    By Proposition \ref{prop:square-zero-Ad}, the Betti numbers of $R_d$ over $S_d$ match $T/(y_1,\dots,y_s)^2$. When all variables $y_i$ have degree $v=v_1=\dots=v_s$, the square of the homogeneous maximal ideal of $T$ is resolved by the Eagon--Northcott complex on a $2\times (s+1)$ matrix whose nonzero entries are variables. The $i$-th syzygy module is generated in degree $(i+1)v$ of rank $i\binom{s+1}{i+1}$.

    If $P$ is ordinary,
    \begin{equation*}
        H(P) = \{0,g+1,g+2,\dots\}, \quad F(P)=g.
    \end{equation*}
    For $d=g+1$, Proposition \ref{prop:square-zero-criteria}(c) gives $\mf a^2 = 0$, while Corollary \ref{cor:minimal-generator-bounds}(c) gives
    \begin{equation*}
        H_{A_{g+1}}(t) = 1 + gt^2.
    \end{equation*}
    Thus $s=g$ and $v_1=\cdots = v_s =2$.
\end{proof}

\newpage
\begin{appendix}
  \section{Betti tables of weighted embeddings with $g = 3$}
  Here we give examples of the family of Betti tables of section rings $R_d = R(C,\OO(dP)$ for points exhibiting all possible Weierstrass semigroups for curves of genus 3. For each, we display the weight vector of the ambient $\PP({\bf w})$ with the two weights corresponding to the extremal rays of $\Gamma_d$ underlined. We also display a vector recording, for each section in the minimal generating generating set of $R_d$, its pole order at the point $P$. The region shading in red shows the entries of the Betti table whose vanishing is forced by Theorem \ref{thm: intro-bettiSupport}.
  \begin{figure}[H]
\begin{adjustwidth}{-0.5in}{-0.3in}
\centering

$H(P) = \langle 4,5,6,7\rangle$

%%%%%%%%%%%%%%%%%%%%%
% kk = ZZ/3
% H  = {4, 5, 6, 7} -- genus 3
% R  = kk[x_0..x_4]/(x_2^2-x_0*x_3-x_1*x_3,x_0*x_2+x_3^2-x_2*x_4,x_0^2+x_0*x_1+x_2*x_3-x_0*x_4-x_1*x_4,x_1^2*x_2+x_0*x_1*x_3+x_2*x_3^2+x_0*x_3*x_4-x_1*x_3*x_4,x_1^3-x_1^2*x_3-x_1*x_2*x_3+x_0*x_3^2-x_3^3+x_0*x_1*x_4+x_1^2*x_4-x_2*x_3*x_4-x_3^2*x_4-x_1*x_4^2+x_2*x_4^2,x_0*x_1^2+x_1^2*x_3+x_1*x_2*x_3+x_3^3-x_0*x_1*x_4-x_1^2*x_4+x_2*x_3*x_4+x_1*x_4^2)
% pt = ideal(-x_0,-x_1,-x_2,-x_3)
\BettiTable{0.15\linewidth}{1}{\underline{1},\underline{4},5,6,7}{0,4,5,6,7}{rrrr}{
        &0&1&2&3\\ \hline
        \text{total:}&1&6&8&3\\ \hline
        0:&1&\sbd&\sbd&\sbd\\
        1:&\sbd&\sbd&\sbd&\sbd\\
        %% 1\text{--}8:&\sbd&\sbd&\sbd&\sbd\\
        %% 2:&\sbd&\sbd&\sbd&\sbd\\
        %% 3:&\sbd&\sbd&\sbd&\sbd\\
        %% 4:&\sbd&\sbd&\sbd&\sbd\\
        %% 5:&\sbd&\sbd&\sbd&\sbd\\
        %% 6:&\sbd&\sbd&\sbd&\sbd\\
        %% 7:&\sbd&\sbd&\sbd&\sbd\\
        %% 8:&\sbd&\sbd&\sbd&\sbd\\
        9:&\sbd&1&\sbd&\sbd\\
        10:&\sbd&1&\sbd&\sbd\\
        11:&\sbd&2&\sbd&\sbd\\
        12:&\sbd&1&\sbd&\sbd\\
        13:&\sbd&1&\sbd&\sbd\\
        14:&\sbd&\sbd&1&\sbd\\
        15:&\sbd&\sbd&2&\sbd\\
        16:&\sbd&\sbd&2&\sbd\\
        17:&\sbd&\sbd&2&\sbd\\
        18:&\sbd&\sbd&1&\sbd\\
        19:&\sbd&\sbd&\sbd&\sbd\\
        20:&\sbd&\sbd&\sbd&1\\
        21:&\sbd&\sbd&\sbd&1\\
        22:&\sbd&\sbd&\sbd&1\\
        23:&\sbd&\sbd&\sbd&\sbd\\
}
\hspace{0.01\linewidth}%
\BettiTable{0.15\linewidth}{2}{\underline{1},\underline{2},3,3,4}{0,4,5,6,7}{rrrr}{
        &0&1&2&3\\ \hline
        \text{total:}&1&6&8&3\\ \hline
        0:&1&\sbd&\sbd&\sbd\\
        1:&\sbd&\sbd&\sbd&\sbd\\
        2:&\sbd&\sbd&\sbd&\sbd\\
        3:&\sbd&\sbd&\sbd&\sbd\\
        4:&\sbd&\sbd&\sbd&\sbd\\
        5:&\sbd&3&\sbd&\sbd\\
        6:&\sbd&2&\sbd&\sbd\\
        7:&\sbd&1&2&\sbd\\
        8:&\sbd&\sbd&4&\sbd\\
        9:&\sbd&\sbd&2&\sbd\\
        10:&\sbd&\sbd&\sbd&2\\
        11:&\sbd&\sbd&\sbd&1\\
        12:&\sbd&\sbd&\sbd&\sbd\\
}
\hspace{0.01\linewidth}%
\BettiTable{0.20\linewidth}{3}{\underline{1},2,2,\underline{2},3,3,3}{0,4,5,6,7,8,9}{rrrrrr}{
        &0&1&2&3&4&5\\ \hline
        \text{total:}&1&15&40&45&24&5\\ \hline
        0:&1&\sbd&\sbd&\sbd&\sbd&\sbd\\
        1:&\sbd&\sbd&\sbd&\sbd&\sbd&\sbd\\
        2:&\sbd&\sbd&\sbd&\sbd&\sbd&\sbd\\
        3:&\sbd&3&\sbd&\sbd&\sbd&\sbd\\
        4:&\sbd&6&2&\sbd&\sbd&\sbd\\
        5:&\sbd&6&12&\sbd&\sbd&\sbd\\
        6:&\sbd&\sbd&18&6&\sbd&\sbd\\
        7:&\sbd&\sbd&8&18&\sbd&\sbd\\
        8:&\sbd&\sbd&\sbd&18&6&\sbd\\
        9:&\sbd&\sbd&\sbd&3&12&\sbd\\
        10:&\sbd&\sbd&\sbd&\sbd&6&2\\
        11:&\sbd&\sbd&\sbd&\sbd&\sbd&3\\
        12:&\sbd&\sbd&\sbd&\sbd&\sbd&\sbd\\
}
\hspace{0.01\linewidth}%
\BettiTable{0.15\linewidth}{4}{\underline{1},\underline{1},2,2,2}{0,4,5,6,7}{rrrr}{
        &0&1&2&3\\ \hline
        \text{total:}&1&6&8&3\\ \hline
        0:&1&\sbd&\sbd&\sbd\\
        1:&\sbd&\sbd&\sbd&\sbd\\
        2:&\sbd&\sbd&\sbd&\sbd\\
        3:&\sbd&6&\sbd&\sbd\\
        4:&\sbd&\sbd&8&\sbd\\
        5:&\sbd&\sbd&\sbd&3\\
        6:&\sbd&\sbd&\sbd&\sbd\\
}
\hspace{0.01\linewidth}%
\par
\BettiTable{0.15\linewidth}{5}{\underline{1},1,\underline{1},2,2}{0,4,5,6,7}{rrrr}{
        &0&1&2&3\\ \hline
        \text{total:}&1&6&8&3\\ \hline
        0:&1&\sbd&\sbd&\sbd\\
        1:&\sbd&\bd&\sbd&\sbd\\
        2:&\sbd&3&\bd&\sbd\\
        3:&\sbd&3&6&\bd\\
        4:&\sbd&\sbd&2&3\\
        5:&\sbd&\sbd&\sbd&\sbd\\
}
\hspace{0.01\linewidth}%
\BettiTable{0.15\linewidth}{6}{\underline{1},1,1,\underline{1},2}{0,4,5,6,7}{rrrr}{
        &0&1&2&3\\ \hline
        \text{total:}&1&6&8&3\\ \hline
        0:&1&\sbd&\sbd&\sbd\\
        1:&\sbd&1&\bd&\sbd\\
        2:&\sbd&4&4&\bd\\
        3:&\sbd&1&4&3\\
        4:&\sbd&\sbd&\sbd&\sbd\\
}
\hspace{0.01\linewidth}%
\BettiTable{0.15\linewidth}{7}{\underline{1},1,1,1,\underline{1}}{0,4,5,6,7}{rrrr}{
        &0&1&2&3\\ \hline
        \text{total:}&1&6&8&3\\ \hline
        0:&1&\sbd&\sbd&\sbd\\
        1:&\sbd&3&2&\bd\\
        2:&\sbd&3&6&3\\
        3:&\sbd&\sbd&\sbd&\sbd\\
}
\hspace{0.01\linewidth}%
%%%%%%%%%%%%%%%%%%%%%
\end{adjustwidth}
\caption{$C = \Proj \mathbb F_3[x_0..x_4]/(x_2^2-x_0x_3-x_1x_3,x_0x_2+x_3^2-x_2x_4,x_0^2+x_0x_1+x_2x_3-x_0x_4-x_1x_4,x_1^2x_2+x_0x_1x_3+x_2x_3^2+x_0x_3x_4-x_1x_3x_4,x_1^3-x_1^2x_3-x_1x_2x_3+x_0x_3^2-x_3^3+x_0x_1x_4+x_1^2x_4-x_2x_3x_4-x_3^2x_4-x_1x_4^2+x_2x_4^2,x_0x_1^2+x_1^2x_3+x_1x_2x_3+x_3^3-x_0x_1x_4-x_1^2x_4+x_2x_3x_4+x_1x_4^2)$}
\label{fig:betti-genus-3-semigroup-4-5-6-7}
\end{figure}

\newpage

\begin{figure}[H]
\begin{adjustwidth}{-0.5in}{-0.3in}
\centering

$H(P) = \langle 3,5,7\rangle$

%%%%%%%%%%%%%%%%%%%%%
% kk = ZZ/3
% H  = {3, 5, 7} -- genus 3
% R  = kk[x_0..x_4]/(x_0*x_2+x_1*x_2-x_2^2-x_0*x_3+x_1*x_4,x_1^2-x_0*x_3,x_0*x_1+x_1*x_2-x_0*x_3+x_1*x_3-x_3^2+x_2*x_4,x_0^2*x_3+x_2^2*x_3+x_0*x_3^2-x_3^3+x_1*x_2*x_4-x_1*x_3*x_4+x_2*x_3*x_4,x_1*x_2^2+x_1*x_2*x_3-x_2^2*x_3+x_0*x_3^2+x_1*x_3^2+x_2*x_3^2-x_3^3-x_2^2*x_4+x_0*x_3*x_4+x_1*x_3*x_4+x_2*x_3*x_4,x_0^3-x_2^3-x_2^2*x_3+x_1*x_3^2-x_3^3-x_0*x_3*x_4+x_2*x_3*x_4+x_3^2*x_4-x_0*x_4^2-x_2*x_4^2,x_2^3*x_3-x_2*x_3^3-x_3^4-x_0*x_3^2*x_4-x_2*x_3^2*x_4-x_2^2*x_4^2+x_0*x_3*x_4^2+x_1*x_3*x_4^2)
% pt = ideal(-x_0,-x_1,-x_2,-x_3)
\BettiTable{0.15\linewidth}{1}{\underline{1},\underline{3},5,7}{0,3,5,7}{rrr}{
        &0&1&2\\ \hline
        \text{total:}&1&3&2\\ \hline
        0:&1&\sbd&\sbd\\
        1:&\sbd&\sbd&\sbd\\
        2:&\sbd&\sbd&\sbd\\
        3:&\sbd&\sbd&\sbd\\
        4:&\sbd&\sbd&\sbd\\
        5:&\sbd&\sbd&\sbd\\
        6:&\sbd&\sbd&\sbd\\
        7:&\sbd&\sbd&\sbd\\
        8:&\sbd&\sbd&\sbd\\
        9:&\sbd&1&\sbd\\
        10:&\sbd&\bd&\sbd\\
        11:&\sbd&1&\sbd\\
        12:&\sbd&\bd&\sbd\\
        13:&\sbd&1&\sbd\\
        14:&\sbd&\sbd&\sbd\\
        15:&\sbd&\sbd&1\\
        16:&\sbd&\sbd&\bd\\
        17:&\sbd&\sbd&1\\
        18:&\sbd&\sbd&\sbd\\
}
\hspace{0.01\linewidth}%
\BettiTable{0.20\linewidth}{2}{\underline{1},2,3,\underline{3},4,4,5}{0,3,5,6,7,8,10}{rrrrrr}{
        &0&1&2&3&4&5\\ \hline
        \text{total:}&1&15&40&45&24&5\\ \hline
        0:&1&\sbd&\sbd&\sbd&\sbd&\sbd\\
        1:&\sbd&\sbd&\sbd&\sbd&\sbd&\sbd\\
        2:&\sbd&\sbd&\sbd&\sbd&\sbd&\sbd\\
        3:&\sbd&1&\sbd&\sbd&\sbd&\sbd\\
        4:&\sbd&1&\sbd&\sbd&\sbd&\sbd\\
        5:&\sbd&3&1&\sbd&\sbd&\sbd\\
        6:&\sbd&3&3&\sbd&\sbd&\sbd\\
        7:&\sbd&4&5&\sbd&\sbd&\sbd\\
        8:&\sbd&2&8&2&\sbd&\sbd\\
        9:&\sbd&1&9&4&\sbd&\sbd\\
        10:&\sbd&\sbd&7&8&\sbd&\sbd\\
        11:&\sbd&\sbd&5&9&1&\sbd\\
        12:&\sbd&\sbd&2&10&3&\sbd\\
        13:&\sbd&\sbd&\sbd&7&5&\sbd\\
        14:&\sbd&\sbd&\sbd&4&6&\sbd\\
        15:&\sbd&\sbd&\sbd&1&5&1\\
        16:&\sbd&\sbd&\sbd&\sbd&3&1\\
        17:&\sbd&\sbd&\sbd&\sbd&1&2\\
        18:&\sbd&\sbd&\sbd&\sbd&\sbd&1\\
        19:&\sbd&\sbd&\sbd&\sbd&\sbd&\sbd\\
}
\hspace{0.01\linewidth}%
\BettiTable{0.15\linewidth}{3}{\underline{1},\underline{1},2,3}{0,3,5,7}{rrr}{
        &0&1&2\\ \hline
        \text{total:}&1&3&2\\ \hline
        0:&1&\sbd&\sbd\\
        1:&\sbd&\sbd&\sbd\\
        2:&\sbd&\sbd&\sbd\\
        3:&\sbd&1&\sbd\\
        4:&\sbd&1&\sbd\\
        5:&\sbd&1&1\\
        6:&\sbd&\sbd&1\\
        7:&\sbd&\sbd&\sbd\\
}
\hspace{0.01\linewidth}%
\BettiTable{0.18\linewidth}{4}{\underline{1},1,2,2,\underline{2},3}{0,3,5,7,8,12}{rrrrr}{
        &0&1&2&3&4\\ \hline
        \text{total:}&1&10&20&15&4\\ \hline
        0:&1&\sbd&\sbd&\sbd&\sbd\\
        1:&\sbd&\bd&\sbd&\sbd&\sbd\\
        2:&\sbd&1&\bd&\sbd&\sbd\\
        3:&\sbd&6&2&\bd&\sbd\\
        4:&\sbd&2&9&1&\sbd\\
        5:&\sbd&1&7&5&\bd\\
        6:&\sbd&\sbd&2&8&1\\
        7:&\sbd&\sbd&\sbd&1&3\\
        8:&\sbd&\sbd&\sbd&\sbd&\sbd\\
}
\hspace{0.01\linewidth}%
\par
\BettiTable{0.15\linewidth}{5}{\underline{1},1,\underline{1},2,2}{0,3,5,7,9}{rrrr}{
        &0&1&2&3\\ \hline
        \text{total:}&1&6&8&3\\ \hline
        0:&1&\sbd&\sbd&\sbd\\
        1:&\sbd&\bd&\sbd&\sbd\\
        2:&\sbd&3&\bd&\sbd\\
        3:&\sbd&3&6&\bd\\
        4:&\sbd&\sbd&2&3\\
        5:&\sbd&\sbd&\sbd&\sbd\\
}
\hspace{0.01\linewidth}%
\BettiTable{0.15\linewidth}{6}{\underline{1},1,1,\underline{1},2}{0,3,5,6,7}{rrrr}{
        &0&1&2&3\\ \hline
        \text{total:}&1&6&8&3\\ \hline
        0:&1&\sbd&\sbd&\sbd\\
        1:&\sbd&1&\bd&\sbd\\
        2:&\sbd&4&4&\bd\\
        3:&\sbd&1&4&3\\
        4:&\sbd&\sbd&\sbd&\sbd\\
}
\hspace{0.01\linewidth}%
\BettiTable{0.15\linewidth}{7}{\underline{1},1,1,1,\underline{1}}{0,3,5,6,7}{rrrr}{
        &0&1&2&3\\ \hline
        \text{total:}&1&4&6&3\\ \hline
        0:&1&\sbd&\sbd&\sbd\\
        1:&\sbd&3&\bd&\bd\\
        2:&\sbd&1&6&3\\
        3:&\sbd&\sbd&\sbd&\sbd\\
}
\hspace{0.01\linewidth}%
%%%%%%%%%%%%%%%%%%%%%
\end{adjustwidth}
\caption{$C = \Proj \mathbb F_3[x_0..x_4]/(x_0x_2+x_1x_2-x_2^2-x_0x_3+x_1x_4,x_1^2-x_0x_3,x_0x_1+x_1x_2-x_0x_3+x_1x_3-x_3^2+x_2x_4,x_0^2x_3+x_2^2x_3+x_0x_3^2-x_3^3+x_1x_2x_4-x_1x_3x_4+x_2x_3x_4,x_1x_2^2+x_1x_2x_3-x_2^2x_3+x_0x_3^2+x_1x_3^2+x_2x_3^2-x_3^3-x_2^2x_4+x_0x_3x_4+x_1x_3x_4+x_2x_3x_4,x_0^3-x_2^3-x_2^2x_3+x_1x_3^2-x_3^3-x_0x_3x_4+x_2x_3x_4+x_3^2x_4-x_0x_4^2-x_2x_4^2,x_2^3x_3-x_2x_3^3-x_3^4-x_0x_3^2x_4-x_2x_3^2x_4-x_2^2x_4^2+x_0x_3x_4^2+x_1x_3x_4^2)$}
\label{fig:betti-genus-3-semigroup-3-5-7}
\end{figure}

\newpage

\begin{figure}[H]
\begin{adjustwidth}{-0.5in}{-0.3in}
\centering

$H(P) = \langle 3,4\rangle$

%%%%%%%%%%%%%%%%%%%%%
% kk = ZZ/3
% H  = {3, 4} -- genus 3
% R  = kk[x_0..x_4]/(x_2*x_3-x_1*x_4,x_1*x_2-x_0*x_4,x_1^2-x_0*x_3,x_0^2*x_3-x_0*x_1*x_3-x_0*x_3^2+x_3^3+x_0*x_1*x_4-x_0*x_3*x_4+x_1*x_3*x_4+x_0*x_4^2-x_2*x_4^2,x_0^2*x_1+x_0*x_1*x_3-x_0*x_3^2+x_1*x_3^2+x_3^3+x_0^2*x_4+x_0*x_2*x_4-x_2^2*x_4+x_1*x_3*x_4+x_0*x_4^2-x_2*x_4^2,x_0^3+x_0^2*x_2+x_0*x_2^2-x_2^3-x_0*x_3^2+x_1*x_3^2-x_3^3-x_0*x_1*x_4+x_0*x_2*x_4-x_2^2*x_4-x_0*x_3*x_4-x_1*x_3*x_4-x_0*x_4^2+x_2*x_4^2)
% pt = ideal(-x_0,-x_1,-x_2,-x_3)
\BettiTable{0.15\linewidth}{1}{\underline{1},\underline{3},4}{0,3,4}{rr}{
        &0&1\\ \hline
        \text{total:}&1&1\\ \hline
        0:&1&\sbd\\
        1:&\sbd&\sbd\\
        2:&\sbd&\sbd\\
        3:&\sbd&\sbd\\
        4:&\sbd&\sbd\\
        5:&\sbd&\sbd\\
        6:&\sbd&\sbd\\
        7:&\sbd&\bd\\
        8:&\sbd&\bd\\
        9:&\sbd&\bd\\
        10:&\sbd&\bd\\
        11:&\sbd&1\\
        12:&\sbd&\sbd\\
}
\hspace{0.01\linewidth}%
\BettiTable{0.15\linewidth}{2}{\underline{1},2,\underline{2},3}{0,3,4,6}{rrr}{
        &0&1&2\\ \hline
        \text{total:}&1&2&1\\ \hline
        0:&1&\sbd&\sbd\\
        1:&\sbd&\sbd&\sbd\\
        2:&\sbd&\sbd&\sbd\\
        3:&\sbd&1&\sbd\\
        4:&\sbd&\bd&\sbd\\
        5:&\sbd&1&\bd\\
        6:&\sbd&\bd&\bd\\
        7:&\sbd&\bd&\bd\\
        8:&\sbd&\sbd&1\\
        9:&\sbd&\sbd&\sbd\\
}
\hspace{0.01\linewidth}%
\BettiTable{0.15\linewidth}{3}{\underline{1},\underline{1},2,3}{0,3,4,8}{rrr}{
        &0&1&2\\ \hline
        \text{total:}&1&3&2\\ \hline
        0:&1&\sbd&\sbd\\
        1:&\sbd&\sbd&\sbd\\
        2:&\sbd&\sbd&\sbd\\
        3:&\sbd&1&\sbd\\
        4:&\sbd&1&\sbd\\
        5:&\sbd&1&1\\
        6:&\sbd&\sbd&1\\
        7:&\sbd&\sbd&\sbd\\
}
\hspace{0.01\linewidth}%
\BettiTable{0.15\linewidth}{4}{\underline{1},1,\underline{1}}{0,3,4}{rr}{
        &0&1\\ \hline
        \text{total:}&1&1\\ \hline
        0:&1&\sbd\\
        1:&\sbd&\bd\\
        2:&\sbd&\bd\\
        3:&\sbd&1\\
        4:&\sbd&\sbd\\
}
\hspace{0.01\linewidth}%
\par
\BettiTable{0.18\linewidth}{5}{\underline{1},1,1,2,\underline{2},3}{0,3,4,9,10,15}{rrrrr}{
        &0&1&2&3&4\\ \hline
        \text{total:}&1&10&20&15&4\\ \hline
        0:&1&\sbd&\sbd&\sbd&\sbd\\
        1:&\sbd&\bd&\bd&\sbd&\sbd\\
        2:&\sbd&4&1&\bd&\sbd\\
        3:&\sbd&4&7&1&\sbd\\
        4:&\sbd&1&7&4&\bd\\
        5:&\sbd&1&4&7&1\\
        6:&\sbd&\sbd&1&3&3\\
        7:&\sbd&\sbd&\sbd&\sbd&\sbd\\
}
\hspace{0.01\linewidth}%
\BettiTable{0.15\linewidth}{6}{\underline{1},1,1,\underline{1},2}{0,3,4,6,11}{rrrr}{
        &0&1&2&3\\ \hline
        \text{total:}&1&6&8&3\\ \hline
        0:&1&\sbd&\sbd&\sbd\\
        1:&\sbd&1&\bd&\sbd\\
        2:&\sbd&4&4&\bd\\
        3:&\sbd&1&4&3\\
        4:&\sbd&\sbd&\sbd&\sbd\\
}
\hspace{0.01\linewidth}%
\BettiTable{0.15\linewidth}{7}{\underline{1},1,1,1,\underline{1}}{0,3,4,6,7}{rrrr}{
        &0&1&2&3\\ \hline
        \text{total:}&1&6&8&3\\ \hline
        0:&1&\sbd&\sbd&\sbd\\
        1:&\sbd&3&2&\bd\\
        2:&\sbd&3&6&3\\
        3:&\sbd&\sbd&\sbd&\sbd\\
}
\hspace{0.01\linewidth}%
%%%%%%%%%%%%%%%%%%%%%
\end{adjustwidth}
\caption{$C = \Proj \mathbb F_3[x_0..x_4]/(x_2x_3-x_1x_4,x_1x_2-x_0x_4,x_1^2-x_0x_3,x_0^2x_3-x_0x_1x_3-x_0x_3^2+x_3^3+x_0x_1x_4-x_0x_3x_4+x_1x_3x_4+x_0x_4^2-x_2x_4^2,x_0^2x_1+x_0x_1x_3-x_0x_3^2+x_1x_3^2+x_3^3+x_0^2x_4+x_0x_2x_4-x_2^2x_4+x_1x_3x_4+x_0x_4^2-x_2x_4^2,x_0^3+x_0^2x_2+x_0x_2^2-x_2^3-x_0x_3^2+x_1x_3^2-x_3^3-x_0x_1x_4+x_0x_2x_4-x_2^2x_4-x_0x_3x_4-x_1x_3x_4-x_0x_4^2+x_2x_4^2)$}
\label{fig:betti-genus-3-semigroup-3-4}
\end{figure}

\newpage

\begin{figure}[H]
\begin{adjustwidth}{-0.5in}{-0.3in}
\centering

$H(P) = \langle 2,7\rangle$

%%%%%%%%%%%%%%%%%%%%%
% kk = ZZ/3
% H  = {2, 7} -- genus 3
% R  = kk[x_0..x_4]/(x_2^2-x_1*x_3,x_1*x_2-x_0*x_3,x_1^2-x_0*x_2,x_0^2*x_3+x_0*x_1*x_3-x_0*x_3^2-x_1*x_3^2+x_3^3-x_2*x_4^2,x_0^2*x_2-x_0*x_1*x_3-x_0*x_2*x_3+x_1*x_3^2+x_2*x_3^2-x_3^3-x_1*x_4^2+x_2*x_4^2,x_0^2*x_1-x_2*x_3^2+x_3^3-x_0*x_4^2+x_1*x_4^2-x_2*x_4^2)
% pt = ideal(-x_0,-x_1,-x_2,-x_3)
\BettiTable{0.15\linewidth}{1}{\underline{1},\underline{2},7}{0,2,7}{rr}{
        &0&1\\ \hline
        \text{total:}&1&1\\ \hline
        0:&1&\sbd\\
        1:&\sbd&\sbd\\
        2:&\sbd&\sbd\\
        3:&\sbd&\sbd\\
        4:&\sbd&\sbd\\
        5:&\sbd&\sbd\\
        6:&\sbd&\sbd\\
        7:&\sbd&\sbd\\
        8:&\sbd&\sbd\\
        9:&\sbd&\sbd\\
        10:&\sbd&\sbd\\
        11:&\sbd&\sbd\\
        12:&\sbd&\sbd\\
        13:&\sbd&1\\
        14:&\sbd&\sbd\\
}
\hspace{0.01\linewidth}%
\BettiTable{0.15\linewidth}{2}{\underline{1},\underline{1},4}{0,2,7}{rr}{
        &0&1\\ \hline
        \text{total:}&1&1\\ \hline
        0:&1&\sbd\\
        1:&\sbd&\sbd\\
        2:&\sbd&\sbd\\
        3:&\sbd&\sbd\\
        4:&\sbd&\sbd\\
        5:&\sbd&\sbd\\
        6:&\sbd&\sbd\\
        7:&\sbd&1\\
        8:&\sbd&\sbd\\
}
\hspace{0.01\linewidth}%
\BettiTable{0.15\linewidth}{3}{\underline{1},1,\underline{2},3,3}{0,2,6,7,9}{rrrr}{
        &0&1&2&3\\ \hline
        \text{total:}&1&6&8&3\\ \hline
        0:&1&\sbd&\sbd&\sbd\\
        1:&\sbd&\bd&\sbd&\sbd\\
        2:&\sbd&1&\sbd&\sbd\\
        3:&\sbd&1&\bd&\sbd\\
        4:&\sbd&1&2&\sbd\\
        5:&\sbd&3&2&\bd\\
        6:&\sbd&\bd&2&1\\
        7:&\sbd&\sbd&2&1\\
        8:&\sbd&\sbd&\bd&1\\
        9:&\sbd&\sbd&\sbd&\sbd\\
}
\hspace{0.01\linewidth}%
\BettiTable{0.15\linewidth}{4}{\underline{1},1,\underline{1},2}{0,2,4,7}{rrr}{
        &0&1&2\\ \hline
        \text{total:}&1&2&1\\ \hline
        0:&1&\sbd&\sbd\\
        1:&\sbd&1&\sbd\\
        2:&\sbd&\bd&\bd\\
        3:&\sbd&1&\bd\\
        4:&\sbd&\bd&1\\
        5:&\sbd&\sbd&\sbd\\
}
\hspace{0.01\linewidth}%
\par
\BettiTable{0.20\linewidth}{5}{\underline{1},1,1,2,2,\underline{2},3}{0,2,4,7,9,10,15}{rrrrrr}{
        &0&1&2&3&4&5\\ \hline
        \text{total:}&1&15&40&45&24&5\\ \hline
        0:&1&\sbd&\sbd&\sbd&\sbd&\sbd\\
        1:&\sbd&1&\bd&\sbd&\sbd&\sbd\\
        2:&\sbd&4&4&\bd&\sbd&\sbd\\
        3:&\sbd&7&12&5&\bd&\sbd\\
        4:&\sbd&2&14&14&2&\sbd\\
        5:&\sbd&1&8&15&8&\bd\\
        6:&\sbd&\sbd&2&10&10&2\\
        7:&\sbd&\sbd&\sbd&1&4&3\\
        8:&\sbd&\sbd&\sbd&\sbd&\sbd&\sbd\\
}
\hspace{0.01\linewidth}%
\BettiTable{0.20\linewidth}{6}{\underline{1},1,1,\underline{1},2,2,2}{0,2,4,6,7,9,11}{rrrrrr}{
        &0&1&2&3&4&5\\ \hline
        \text{total:}&1&15&40&45&24&5\\ \hline
        0:&1&\sbd&\sbd&\sbd&\sbd&\sbd\\
        1:&\sbd&3&2&\sbd&\sbd&\sbd\\
        2:&\sbd&6&12&6&\sbd&\sbd\\
        3:&\sbd&6&18&18&6&\sbd\\
        4:&\sbd&\sbd&8&18&12&2\\
        5:&\sbd&\sbd&\sbd&3&6&3\\
        6:&\sbd&\sbd&\sbd&\sbd&\sbd&\sbd\\
}
\hspace{0.01\linewidth}%
\BettiTable{0.15\linewidth}{7}{\underline{1},1,1,1,\underline{1}}{0,2,4,6,7}{rrrr}{
        &0&1&2&3\\ \hline
        \text{total:}&1&6&8&3\\ \hline
        0:&1&\sbd&\sbd&\sbd\\
        1:&\sbd&3&2&\bd\\
        2:&\sbd&3&6&3\\
        3:&\sbd&\sbd&\sbd&\sbd\\
}
\hspace{0.01\linewidth}%
%%%%%%%%%%%%%%%%%%%%%
\end{adjustwidth}
\caption{$C = \Proj \mathbb F_3[x_0..x_4]/(x_2^2-x_1x_3,x_1x_2-x_0x_3,x_1^2-x_0x_2,x_0^2x_3+x_0x_1x_3-x_0x_3^2-x_1x_3^2+x_3^3-x_2x_4^2,x_0^2x_2-x_0x_1x_3-x_0x_2x_3+x_1x_3^2+x_2x_3^2-x_3^3-x_1x_4^2+x_2x_4^2,x_0^2x_1-x_2x_3^2+x_3^3-x_0x_4^2+x_1x_4^2-x_2x_4^2)$}
\label{fig:betti-genus-3-semigroup-2-7}
\end{figure}

  \newpage
  \section{Betti tables of weighted embeddings with $g = 4$}
   Here we give examples of the family of Betti tables of section rings $R_d = R(C,\OO(dP)$ for points exhibiting all possible Weierstrass semigroups for curves of genus 4.
   \vspace{-0.10in}
  
\begin{figure}[H]
\begin{adjustwidth}{-0.5in}{-0.3in}
\centering

$H(P) = \langle 4,5,6 \rangle$

%%%%%%%%%%%%%%%%%%%%%
% kk = ZZ/3
% H  = {4, 5, 6} -- genus 4
% R  = kk[x_0..x_5]/(x_2*x_4-x_1*x_5,x_1*x_4+x_3*x_4-x_2*x_5,x_2^2-x_1*x_3-x_0*x_4,x_1*x_2-x_0*x_5,x_1^2-x_0*x_4,x_0^2-x_0*x_1-x_0*x_2-x_3^2-x_3*x_4+x_2*x_5,x_3^2*x_4-x_0*x_4^2-x_2*x_3*x_5+x_0*x_5^2,x_0*x_3*x_4+x_0*x_4^2-x_3*x_4^2-x_4^3-x_0*x_2*x_5+x_0*x_4*x_5+x_3*x_5^2)
% pt = ideal(-x_0,-x_1,-x_2,-x_3,-x_4)
\BettiTable{0.15\linewidth}{1}{\underline{1},\underline{4},5,6}{0,4,5,6}{rrr}{
        &0&1&2\\ \hline
        \text{total:}&1&2&1\\ \hline
        0:&1&\sbd&\sbd\\
        1\text{--}8:&\sbd&\sbd&\sbd\\
        % 1:&\sbd&\sbd&\sbd\\
        % 2:&\sbd&\sbd&\sbd\\
        % 3:&\sbd&\sbd&\sbd\\
        % 4:&\sbd&\sbd&\sbd\\
        % 5:&\sbd&\sbd&\sbd\\
        % 6:&\sbd&\sbd&\sbd\\
        % 7:&\sbd&\sbd&\sbd\\
        % 8:&\sbd&\sbd&\sbd\\
        9:&\sbd&1&\sbd\\
        10:&\sbd&\bd&\sbd\\
        11:&\sbd&1&\sbd\\
        12:&\sbd&\bd&\sbd\\
        13:&\sbd&\bd&\sbd\\
        14:&\sbd&\bd&\bd\\
        15:&\sbd&\bd&\bd\\
        16:&\sbd&\bd&\bd\\
        17:&\sbd&\sbd&\bd\\
        18:&\sbd&\sbd&\bd\\
        19:&\sbd&\sbd&\bd\\
        20:&\sbd&\sbd&1\\
        21:&\sbd&\sbd&\sbd\\
}
\hspace{0.01\linewidth}%
\BettiTable{0.15\linewidth}{2}{\underline{1},\underline{2},3,3}{0,4,5,6}{rrr}{
        &0&1&2\\ \hline
        \text{total:}&1&2&1\\ \hline
        0:&1&\sbd&\sbd\\
        1:&\sbd&\sbd&\sbd\\
        2:&\sbd&\sbd&\sbd\\
        3:&\sbd&\sbd&\sbd\\
        4:&\sbd&\sbd&\sbd\\
        5:&\sbd&2&\sbd\\
        6:&\sbd&\bd&\sbd\\
        7:&\sbd&\bd&\bd\\
        8:&\sbd&\bd&\bd\\
        9:&\sbd&\sbd&\bd\\
        10:&\sbd&\sbd&1\\
        11:&\sbd&\sbd&\sbd\\
}
\hspace{0.01\linewidth}%
\BettiTable{0.18\linewidth}{3}{\underline{1},2,2,\underline{2},3,3}{0,4,5,6,8,9}{rrrrr}{
        &0&1&2&3&4\\ \hline
        \text{total:}&1&9&16&9&1\\ \hline
        0:&1&\sbd&\sbd&\sbd&\sbd\\
        1:&\sbd&\sbd&\sbd&\sbd&\sbd\\
        2:&\sbd&\sbd&\sbd&\sbd&\sbd\\
        3:&\sbd&3&\sbd&\sbd&\sbd\\
        4:&\sbd&3&2&\sbd&\sbd\\
        5:&\sbd&3&6&\sbd&\sbd\\
        6:&\sbd&\bd&6&3&\sbd\\
        7:&\sbd&\bd&2&3&\sbd\\
        8:&\sbd&\sbd&\bd&3&\bd\\
        9:&\sbd&\sbd&\bd&\bd&\bd\\
        10:&\sbd&\sbd&\sbd&\bd&\bd\\
        11:&\sbd&\sbd&\sbd&\sbd&1\\
        12:&\sbd&\sbd&\sbd&\sbd&\sbd\\
}
\hspace{0.01\linewidth}%
\BettiTable{0.15\linewidth}{4}{\underline{1},\underline{1},2,2,3}{0,4,5,6,11}{rrrr}{
        &0&1&2&3\\ \hline
        \text{total:}&1&6&8&3\\ \hline
        0:&1&\sbd&\sbd&\sbd\\
        1:&\sbd&\sbd&\sbd&\sbd\\
        2:&\sbd&\sbd&\sbd&\sbd\\
        3:&\sbd&3&\sbd&\sbd\\
        4:&\sbd&2&2&\sbd\\
        5:&\sbd&1&4&\sbd\\
        6:&\sbd&\sbd&2&2\\
        7:&\sbd&\sbd&\sbd&1\\
        8:&\sbd&\sbd&\sbd&\sbd\\
}
\hspace{0.01\linewidth}%
\BettiTable{0.15\linewidth}{5}{\underline{1},1,\underline{1},2}{0,4,5,6}{rrr}{
        &0&1&2\\ \hline
        \text{total:}&1&3&2\\ \hline
        0:&1&\sbd&\sbd\\
        1:&\sbd&\bd&\sbd\\
        2:&\sbd&1&\bd\\
        3:&\sbd&2&1\\
        4:&\sbd&\bd&1\\
        5:&\sbd&\sbd&\sbd\\
}
\hspace{0.01\linewidth}%
\BettiTable{0.15\linewidth}{6}{\underline{1},1,1,\underline{1}}{0,4,5,6}{rrr}{
        &0&1&2\\ \hline
        \text{total:}&1&2&1\\ \hline
        0:&1&\sbd&\sbd\\
        1:&\sbd&1&\bd\\
        2:&\sbd&1&\bd\\
        3:&\sbd&\bd&1\\
        4:&\sbd&\sbd&\sbd\\
}
\hspace{0.01\linewidth}%
\BettiTable{0.20\linewidth}{7}{\underline{1},1,1,1,2,\underline{2},3}{0,4,5,6,13,14,21}{rrrrrr}{
        &0&1&2&3&4&5\\ \hline
        \text{total:}&1&15&40&45&24&5\\ \hline
        0:&1&\sbd&\sbd&\sbd&\sbd&\sbd\\
        1:&\sbd&1&\bd&\bd&\sbd&\sbd\\
        2:&\sbd&7&8&1&\bd&\sbd\\
        3:&\sbd&5&15&11&1&\sbd\\
        4:&\sbd&1&11&15&5&\bd\\
        5:&\sbd&1&5&14&11&1\\
        6:&\sbd&\sbd&1&4&7&4\\
        7:&\sbd&\sbd&\sbd&\sbd&\sbd&\sbd\\
}
\hspace{0.01\linewidth}%
\BettiTable{0.18\linewidth}{8}{\underline{1},1,1,1,\underline{1},2}{0,4,5,6,8,15}{rrrrr}{
        &0&1&2&3&4\\ \hline
        \text{total:}&1&8&16&13&4\\ \hline
        0:&1&\sbd&\sbd&\sbd&\sbd\\
        1:&\sbd&3&\bd&\bd&\sbd\\
        2:&\sbd&4&12&4&\bd\\
        3:&\sbd&1&4&9&4\\
        4:&\sbd&\sbd&\sbd&\sbd&\sbd\\
}
\hspace{0.01\linewidth}%
\BettiTable{0.18\linewidth}{9}{\underline{1},1,1,1,1,\underline{1}}{0,4,5,6,8,9}{rrrrr}{
        &0&1&2&3&4\\ \hline
        \text{total:}&1&7&14&12&4\\ \hline
        0:&1&\sbd&\sbd&\sbd&\sbd\\
        1:&\sbd&6&5&\bd&\bd\\
        2:&\sbd&1&9&12&4\\
        3:&\sbd&\sbd&\sbd&\sbd&\sbd\\
}
\hspace{0.01\linewidth}%
%%%%%%%%%%%%%%%%%%%%%
\end{adjustwidth}
\caption{$C = \Proj \mathbb F_3[x_0..x_5]/(x_2x_4-x_1x_5,x_1x_4+x_3x_4-x_2x_5,x_2^2-x_1x_3-x_0x_4,x_1x_2-x_0x_5,x_1^2-x_0x_4,x_0^2-x_0x_1-x_0x_2-x_3^2-x_3x_4+x_2x_5,x_3^2x_4-x_0x_4^2-x_2x_3x_5+x_0x_5^2,x_0x_3x_4+x_0x_4^2-x_3x_4^2-x_4^3-x_0x_2x_5+x_0x_4x_5+x_3x_5^2)$}
\label{fig:betti-genus-4-semigroup-4-5-6}
\end{figure}

\newpage

\begin{figure}[H]
\begin{adjustwidth}{-0.5in}{-0.3in}
\centering

$H(P) = \langle 5,6,7,8,9\rangle$

%%%%%%%%%%%%%%%%%%%%%
% kk = ZZ/3
% H  = {5, 6, 7, 8, 9} -- genus 4
% R  = kk[x_0..x_5]/(x_1*x_3+x_2*x_3+x_3^2-x_0*x_4+x_1*x_4-x_4^2+x_0*x_5-x_1*x_5+x_3*x_5,x_1*x_2+x_2^2-x_0*x_3+x_2*x_3-x_3^2-x_1*x_4-x_2*x_4-x_3*x_4+x_0*x_5-x_1*x_5+x_2*x_5,x_0*x_2+x_0*x_3-x_2*x_3+x_0*x_4+x_3*x_4-x_4^2-x_0*x_5-x_2*x_5+x_3*x_5,x_1^2+x_2^2+x_0*x_3+x_2*x_3+x_3^2+x_0*x_4+x_1*x_4-x_4^2+x_0*x_5-x_1*x_5+x_3*x_5,x_0*x_1+x_2^2+x_0*x_3+x_2*x_3+x_3^2+x_4^2+x_0*x_5-x_1*x_5-x_3*x_5,x_0^2-x_2^2+x_3^2-x_0*x_4+x_1*x_4+x_2*x_4+x_3*x_4+x_4^2+x_1*x_5-x_2*x_5-x_3*x_5,x_3^3+x_2*x_3*x_4+x_3^2*x_4-x_0*x_4^2-x_2*x_4^2+x_3*x_4^2-x_4^3+x_0*x_3*x_5-x_0*x_4*x_5+x_1*x_4*x_5-x_2*x_4*x_5+x_3*x_4*x_5+x_4^2*x_5-x_3*x_5^2,x_2*x_3^2-x_0*x_3*x_4+x_2*x_3*x_4-x_3^2*x_4-x_0*x_4^2-x_1*x_4^2+x_2*x_4^2+x_3*x_4^2-x_4^3+x_2^2*x_5-x_0*x_3*x_5+x_2*x_3*x_5+x_3^2*x_5-x_1*x_4*x_5-x_3*x_4*x_5+x_1*x_5^2-x_2*x_5^2,x_0*x_3^2-x_2^2*x_4-x_0*x_3*x_4+x_2*x_4^2+x_3*x_4^2+x_4^3+x_2^2*x_5-x_0*x_3*x_5+x_2*x_3*x_5+x_3^2*x_5+x_0*x_4*x_5-x_1*x_4*x_5+x_2*x_4*x_5+x_4^2*x_5+x_0*x_5^2-x_2*x_5^2-x_3*x_5^2,x_2^2*x_3-x_1*x_4^2-x_2*x_4^2+x_0*x_3*x_5+x_2*x_3*x_5-x_0*x_4*x_5-x_2*x_4*x_5-x_0*x_5^2+x_1*x_5^2,x_2^3-x_2^2*x_4+x_2*x_3*x_4+x_3^2*x_4+x_0*x_4^2+x_1*x_4^2+x_2*x_4^2+x_4^3-x_2^2*x_5-x_0*x_3*x_5-x_2*x_3*x_5+x_3^2*x_5+x_0*x_4*x_5+x_1*x_4*x_5-x_2*x_4*x_5+x_3*x_4*x_5-x_4^2*x_5+x_0*x_5^2+x_2*x_5^2+x_3*x_5^2)
% pt = ideal(-x_0,-x_1,-x_2,-x_3,-x_4)
\BettiTable{0.18\linewidth}{1}{\underline{1},\underline{5},6,7,8,9}{0,5,6,7,8,9}{rrrrr}{
        &0&1&2&3&4\\ \hline
        \text{total:}&1&10&20&15&4\\ \hline
        0:&1&\sbd&\sbd&\sbd&\sbd\\
        1\text{--}10:&\sbd&\sbd&\sbd&\sbd&\sbd\\
        % 1:&\sbd&\sbd&\sbd&\sbd&\sbd\\
        % 2:&\sbd&\sbd&\sbd&\sbd&\sbd\\
        % 3:&\sbd&\sbd&\sbd&\sbd&\sbd\\
        % 4:&\sbd&\sbd&\sbd&\sbd&\sbd\\
        % 5:&\sbd&\sbd&\sbd&\sbd&\sbd\\
        % 6:&\sbd&\sbd&\sbd&\sbd&\sbd\\
        % 7:&\sbd&\sbd&\sbd&\sbd&\sbd\\
        % 8:&\sbd&\sbd&\sbd&\sbd&\sbd\\
        % 9:&\sbd&\sbd&\sbd&\sbd&\sbd\\
        % 10:&\sbd&\sbd&\sbd&\sbd&\sbd\\
        11:&\sbd&1&\sbd&\sbd&\sbd\\
        12:&\sbd&1&\sbd&\sbd&\sbd\\
        13:&\sbd&2&\sbd&\sbd&\sbd\\
        14:&\sbd&2&\sbd&\sbd&\sbd\\
        15:&\sbd&2&\sbd&\sbd&\sbd\\
        16:&\sbd&1&\sbd&\sbd&\sbd\\
        17:&\sbd&1&1&\sbd&\sbd\\
        18:&\sbd&\sbd&2&\sbd&\sbd\\
        19:&\sbd&\sbd&3&\sbd&\sbd\\
        20:&\sbd&\sbd&4&\sbd&\sbd\\
        21:&\sbd&\sbd&4&\sbd&\sbd\\
        22:&\sbd&\sbd&3&\sbd&\sbd\\
        23:&\sbd&\sbd&2&\sbd&\sbd\\
        24:&\sbd&\sbd&1&1&\sbd\\
        25:&\sbd&\sbd&\sbd&2&\sbd\\
        26:&\sbd&\sbd&\sbd&3&\sbd\\
        27:&\sbd&\sbd&\sbd&3&\sbd\\
        28:&\sbd&\sbd&\sbd&3&\sbd\\
        29:&\sbd&\sbd&\sbd&2&\sbd\\
        30:&\sbd&\sbd&\sbd&1&\sbd\\
        31:&\sbd&\sbd&\sbd&\sbd&\sbd\\
        32:&\sbd&\sbd&\sbd&\sbd&1\\
        33:&\sbd&\sbd&\sbd&\sbd&1\\
        34:&\sbd&\sbd&\sbd&\sbd&1\\
        35:&\sbd&\sbd&\sbd&\sbd&1\\
        36:&\sbd&\sbd&\sbd&\sbd&\sbd\\
}
\hspace{0.01\linewidth}%
\BettiTable{0.20\linewidth}{2}{\underline{1},3,\underline{3},4,4,5,5}{0,5,6,7,8,9,10}{rrrrrr}{
        &0&1&2&3&4&5\\ \hline
        \text{total:}&1&15&40&45&24&5\\ \hline
        0:&1&\sbd&\sbd&\sbd&\sbd&\sbd\\
        1:&\sbd&\sbd&\sbd&\sbd&\sbd&\sbd\\
        2:&\sbd&\sbd&\sbd&\sbd&\sbd&\sbd\\
        3:&\sbd&\sbd&\sbd&\sbd&\sbd&\sbd\\
        4:&\sbd&\sbd&\sbd&\sbd&\sbd&\sbd\\
        5:&\sbd&1&\sbd&\sbd&\sbd&\sbd\\
        6:&\sbd&2&\sbd&\sbd&\sbd&\sbd\\
        7:&\sbd&5&\sbd&\sbd&\sbd&\sbd\\
        8:&\sbd&4&2&\sbd&\sbd&\sbd\\
        9:&\sbd&3&6&\sbd&\sbd&\sbd\\
        10:&\sbd&\sbd&10&\sbd&\sbd&\sbd\\
        11:&\sbd&\sbd&12&1&\sbd&\sbd\\
        12:&\sbd&\sbd&8&6&\sbd&\sbd\\
        13:&\sbd&\sbd&2&11&\sbd&\sbd\\
        14:&\sbd&\sbd&\sbd&14&\sbd&\sbd\\
        15:&\sbd&\sbd&\sbd&9&2&\sbd\\
        16:&\sbd&\sbd&\sbd&4&6&\sbd\\
        17:&\sbd&\sbd&\sbd&\sbd&8&\sbd\\
        18:&\sbd&\sbd&\sbd&\sbd&6&\sbd\\
        19:&\sbd&\sbd&\sbd&\sbd&2&1\\
        20:&\sbd&\sbd&\sbd&\sbd&\sbd&2\\
        21:&\sbd&\sbd&\sbd&\sbd&\sbd&2\\
        22:&\sbd&\sbd&\sbd&\sbd&\sbd&\sbd\\
}
\hspace{0.01\linewidth}%
\BettiTable{0.18\linewidth}{3}{\underline{1},2,\underline{2},3,3,3}{0,5,6,7,8,9}{rrrrr}{
        &0&1&2&3&4\\ \hline
        \text{total:}&1&10&20&15&4\\ \hline
        0:&1&\sbd&\sbd&\sbd&\sbd\\
        1:&\sbd&\sbd&\sbd&\sbd&\sbd\\
        2:&\sbd&\sbd&\sbd&\sbd&\sbd\\
        3:&\sbd&\bd&\sbd&\sbd&\sbd\\
        4:&\sbd&3&\sbd&\sbd&\sbd\\
        5:&\sbd&7&\bd&\sbd&\sbd\\
        6:&\sbd&\bd&9&\sbd&\sbd\\
        7:&\sbd&\sbd&11&\bd&\sbd\\
        8:&\sbd&\sbd&\bd&9&\sbd\\
        9:&\sbd&\sbd&\sbd&6&\bd\\
        10:&\sbd&\sbd&\sbd&\bd&3\\
        11:&\sbd&\sbd&\sbd&\sbd&1\\
        12:&\sbd&\sbd&\sbd&\sbd&\sbd\\
}
\hspace{0.01\linewidth}%
\BettiTable{0.25\linewidth}{4}{\underline{1},2,2,2,\underline{2},3,3,3,3}{0,5,6,7,8,9,10,11,12}{rrrrrrrr}{
        &0&1&2&3&4&5&6&7\\ \hline
        \text{total:}&1&28&112&210&224&140&48&7\\ \hline
        0:&1&\sbd&\sbd&\sbd&\sbd&\sbd&\sbd&\sbd\\
        1:&\sbd&\sbd&\sbd&\sbd&\sbd&\sbd&\sbd&\sbd\\
        2:&\sbd&\sbd&\sbd&\sbd&\sbd&\sbd&\sbd&\sbd\\
        3:&\sbd&6&\sbd&\sbd&\sbd&\sbd&\sbd&\sbd\\
        4:&\sbd&12&8&\sbd&\sbd&\sbd&\sbd&\sbd\\
        5:&\sbd&10&36&3&\sbd&\sbd&\sbd&\sbd\\
        6:&\sbd&\sbd&48&36&\sbd&\sbd&\sbd&\sbd\\
        7:&\sbd&\sbd&20&84&12&\sbd&\sbd&\sbd\\
        8:&\sbd&\sbd&\sbd&72&64&\sbd&\sbd&\sbd\\
        9:&\sbd&\sbd&\sbd&15&96&18&\sbd&\sbd\\
        10:&\sbd&\sbd&\sbd&\sbd&48&56&\sbd&\sbd\\
        11:&\sbd&\sbd&\sbd&\sbd&4&54&12&\sbd\\
        12:&\sbd&\sbd&\sbd&\sbd&\sbd&12&24&\sbd\\
        13:&\sbd&\sbd&\sbd&\sbd&\sbd&\sbd&12&3\\
        14:&\sbd&\sbd&\sbd&\sbd&\sbd&\sbd&\sbd&4\\
        15:&\sbd&\sbd&\sbd&\sbd&\sbd&\sbd&\sbd&\sbd\\
}
\hspace{0.01\linewidth}%
\BettiTable{0.18\linewidth}{5}{\underline{1},\underline{1},2,2,2,2}{0,5,6,7,8,9}{rrrrr}{
        &0&1&2&3&4\\ \hline
        \text{total:}&1&10&20&15&4\\ \hline
        0:&1&\sbd&\sbd&\sbd&\sbd\\
        1:&\sbd&\sbd&\sbd&\sbd&\sbd\\
        2:&\sbd&\sbd&\sbd&\sbd&\sbd\\
        3:&\sbd&10&\sbd&\sbd&\sbd\\
        4:&\sbd&\sbd&20&\sbd&\sbd\\
        5:&\sbd&\sbd&\sbd&15&\sbd\\
        6:&\sbd&\sbd&\sbd&\sbd&4\\
        7:&\sbd&\sbd&\sbd&\sbd&\sbd\\
}
\hspace{0.01\linewidth}%
\BettiTable{0.18\linewidth}{6}{\underline{1},1,\underline{1},2,2,2}{0,5,6,7,8,9}{rrrrr}{
        &0&1&2&3&4\\ \hline
        \text{total:}&1&10&20&15&4\\ \hline
        0:&1&\sbd&\sbd&\sbd&\sbd\\
        1:&\sbd&\bd&\sbd&\sbd&\sbd\\
        2:&\sbd&4&\bd&\sbd&\sbd\\
        3:&\sbd&6&12&\bd&\sbd\\
        4:&\sbd&\sbd&8&12&\bd\\
        5:&\sbd&\sbd&\sbd&3&4\\
        6:&\sbd&\sbd&\sbd&\sbd&\sbd\\
}
\hspace{0.01\linewidth}%
\BettiTable{0.15\linewidth}{7}{\underline{1},1,1,\underline{1},2}{0,5,6,7,9}{rrrr}{
        &0&1&2&3\\ \hline
        \text{total:}&1&7&10&4\\ \hline
        0:&1&\sbd&\sbd&\sbd\\
        1:&\sbd&\bd&\bd&\sbd\\
        2:&\sbd&6&4&\bd\\
        3:&\sbd&1&6&4\\
        4:&\sbd&\sbd&\sbd&\sbd\\
}
\hspace{0.01\linewidth}%
\BettiTable{0.15\linewidth}{8}{\underline{1},1,1,1,\underline{1}}{0,5,6,7,8}{rrrr}{
        &0&1&2&3\\ \hline
        \text{total:}&1&6&9&4\\ \hline
        0:&1&\sbd&\sbd&\sbd\\
        1:&\sbd&2&\bd&\bd\\
        2:&\sbd&4&9&4\\
        3:&\sbd&\sbd&\sbd&\sbd\\
}
\hspace{0.01\linewidth}%
\BettiTable{0.18\linewidth}{9}{\underline{1},1,1,1,1,\underline{1}}{0,5,6,7,8,9}{rrrrr}{
        &0&1&2&3&4\\ \hline
        \text{total:}&1&6&13&12&4\\ \hline
        0:&1&\sbd&\sbd&\sbd&\sbd\\
        1:&\sbd&6&4&\bd&\bd\\
        2:&\sbd&\bd&9&12&4\\
        3:&\sbd&\sbd&\sbd&\sbd&\sbd\\
}
\hspace{0.01\linewidth}%
%%%%%%%%%%%%%%%%%%%%%
\end{adjustwidth}
\caption{$C = \Proj \mathbb F_3[x_0..x_5]/(x_1x_3+x_2x_3+x_3^2-x_0x_4+x_1x_4-x_4^2+x_0x_5-x_1x_5+x_3x_5,x_1x_2+x_2^2-x_0x_3+x_2x_3-x_3^2-x_1x_4-x_2x_4-x_3x_4+x_0x_5-x_1x_5+x_2x_5,x_0x_2+x_0x_3-x_2x_3+x_0x_4+x_3x_4-x_4^2-x_0x_5-x_2x_5+x_3x_5,x_1^2+x_2^2+x_0x_3+x_2x_3+x_3^2+x_0x_4+x_1x_4-x_4^2+x_0x_5-x_1x_5+x_3x_5,x_0x_1+x_2^2+x_0x_3+x_2x_3+x_3^2+x_4^2+x_0x_5-x_1x_5-x_3x_5,x_0^2-x_2^2+x_3^2-x_0x_4+x_1x_4+x_2x_4+x_3x_4+x_4^2+x_1x_5-x_2x_5-x_3x_5,x_3^3+x_2x_3x_4+x_3^2x_4-x_0x_4^2-x_2x_4^2+x_3x_4^2-x_4^3+x_0x_3x_5-x_0x_4x_5+x_1x_4x_5-x_2x_4x_5+x_3x_4x_5+x_4^2x_5-x_3x_5^2,x_2x_3^2-x_0x_3x_4+x_2x_3x_4-x_3^2x_4-x_0x_4^2-x_1x_4^2+x_2x_4^2+x_3x_4^2-x_4^3+x_2^2x_5-x_0x_3x_5+x_2x_3x_5+x_3^2x_5-x_1x_4x_5-x_3x_4x_5+x_1x_5^2-x_2x_5^2,x_0x_3^2-x_2^2x_4-x_0x_3x_4+x_2x_4^2+x_3x_4^2+x_4^3+x_2^2x_5-x_0x_3x_5+x_2x_3x_5+x_3^2x_5+x_0x_4x_5-x_1x_4x_5+x_2x_4x_5+x_4^2x_5+x_0x_5^2-x_2x_5^2-x_3x_5^2,x_2^2x_3-x_1x_4^2-x_2x_4^2+x_0x_3x_5+x_2x_3x_5-x_0x_4x_5-x_2x_4x_5-x_0x_5^2+x_1x_5^2,x_2^3-x_2^2x_4+x_2x_3x_4+x_3^2x_4+x_0x_4^2+x_1x_4^2+x_2x_4^2+x_4^3-x_2^2x_5-x_0x_3x_5-x_2x_3x_5+x_3^2x_5+x_0x_4x_5+x_1x_4x_5-x_2x_4x_5+x_3x_4x_5-x_4^2x_5+x_0x_5^2+x_2x_5^2+x_3x_5^2)$}
\label{fig:betti-genus-4-semigroup-5-6-7-8-9}
\end{figure}

\newpage

\begin{figure}[H]
\begin{adjustwidth}{-0.5in}{-0.3in}
\centering

$H(P) = \langle 4,6,7,9\rangle$

%%%%%%%%%%%%%%%%%%%%%
% kk = ZZ/3
% H  = {4, 6, 7, 9} -- genus 4
% R  = kk[x_0..x_5]/(x_1*x_3-x_0*x_4-x_1*x_4-x_3*x_4-x_0*x_5+x_2*x_5,x_2^2-x_2*x_3+x_3^2-x_0*x_4-x_3*x_4+x_4^2-x_0*x_5+x_1*x_5+x_2*x_5-x_3*x_5,x_1*x_2-x_2*x_3+x_2*x_4+x_4^2+x_0*x_5+x_1*x_5-x_3*x_5,x_0*x_2+x_2*x_3+x_3^2-x_0*x_4-x_1*x_4+x_2*x_4-x_4^2+x_0*x_5-x_1*x_5+x_3*x_5,x_1^2-x_0*x_4,x_0*x_1-x_0*x_3+x_2*x_3-x_0*x_4+x_3*x_4+x_0*x_5-x_1*x_5-x_2*x_5,x_3^2*x_4+x_1*x_4^2-x_2*x_4^2+x_0^2*x_5-x_0*x_3*x_5-x_2*x_3*x_5+x_3^2*x_5+x_0*x_4*x_5+x_2*x_4*x_5-x_3*x_4*x_5-x_4^2*x_5+x_1*x_5^2+x_2*x_5^2+x_3*x_5^2,x_2*x_3*x_4-x_1*x_4^2-x_0*x_3*x_5-x_2*x_3*x_5-x_0*x_4*x_5-x_1*x_4*x_5-x_2*x_4*x_5+x_3*x_4*x_5-x_4^2*x_5+x_1*x_5^2-x_2*x_5^2+x_3*x_5^2,x_3^3-x_0*x_3*x_4-x_0*x_4^2+x_1*x_4^2+x_2*x_4^2-x_4^3+x_2*x_3*x_5-x_3^2*x_5+x_0*x_4*x_5+x_1*x_4*x_5+x_2*x_4*x_5-x_3*x_4*x_5-x_0*x_5^2-x_2*x_5^2,x_2*x_3^2-x_0*x_4^2-x_1*x_4^2-x_3*x_4^2-x_0^2*x_5+x_0*x_3*x_5+x_2*x_3*x_5+x_0*x_4*x_5-x_1*x_4*x_5+x_2*x_4*x_5+x_0*x_5^2-x_1*x_5^2,x_0*x_3^2-x_0^2*x_4-x_0*x_3*x_4+x_0*x_4^2+x_1*x_4^2-x_2*x_4^2-x_0^2*x_5+x_2*x_3*x_5+x_1*x_4*x_5+x_2*x_4*x_5-x_3*x_4*x_5+x_4^2*x_5+x_0*x_5^2+x_2*x_5^2-x_3*x_5^2,x_0*x_3*x_4^2-x_0*x_4^3-x_1*x_4^3-x_2*x_4^3+x_3*x_4^3+x_4^4+x_0^2*x_3*x_5-x_0^2*x_4*x_5-x_1*x_4^2*x_5+x_2*x_4^2*x_5+x_0^2*x_5^2+x_2*x_3*x_5^2-x_3^2*x_5^2+x_0*x_4*x_5^2+x_0*x_5^3,x_0^2*x_4^2+x_0*x_4^3-x_1*x_4^3-x_2*x_4^3+x_0^3*x_5+x_0^2*x_4*x_5+x_0*x_3*x_4*x_5-x_0*x_4^2*x_5-x_1*x_4^2*x_5-x_3*x_4^2*x_5-x_4^3*x_5-x_0^2*x_5^2-x_3^2*x_5^2-x_0*x_4*x_5^2-x_1*x_4*x_5^2+x_4^2*x_5^2-x_1*x_5^3+x_2*x_5^3-x_3*x_5^3)
% pt = ideal(-x_0,-x_1,-x_2,-x_3,-x_4)
\BettiTable{0.15\linewidth}{1}{\underline{1},\underline{4},6,7,9}{0,4,6,7,9}{rrrr}{
        &0&1&2&3\\ \hline
        \text{total:}&1&6&8&3\\ \hline
        0:&1&\sbd&\sbd&\sbd\\
        1\text{--}10:&\sbd&\sbd&\sbd&\sbd\\
        % 1:&\sbd&\sbd&\sbd&\sbd\\
        % 2:&\sbd&\sbd&\sbd&\sbd\\
        % 3:&\sbd&\sbd&\sbd&\sbd\\
        % 4:&\sbd&\sbd&\sbd&\sbd\\
        % 5:&\sbd&\sbd&\sbd&\sbd\\
        % 6:&\sbd&\sbd&\sbd&\sbd\\
        % 7:&\sbd&\sbd&\sbd&\sbd\\
        % 8:&\sbd&\sbd&\sbd&\sbd\\
        % 9:&\sbd&\sbd&\sbd&\sbd\\
        % 10:&\sbd&\sbd&\sbd&\sbd\\
        11:&\sbd&1&\sbd&\sbd\\
        12:&\sbd&1&\sbd&\sbd\\
        13:&\sbd&1&\sbd&\sbd\\
        14:&\sbd&1&\sbd&\sbd\\
        15:&\sbd&1&\sbd&\sbd\\
        16:&\sbd&\bd&\sbd&\sbd\\
        17:&\sbd&1&1&\sbd\\
        18:&\sbd&\sbd&1&\sbd\\
        19:&\sbd&\sbd&1&\sbd\\
        20:&\sbd&\sbd&2&\sbd\\
        21:&\sbd&\sbd&1&\sbd\\
        22:&\sbd&\sbd&1&\sbd\\
        23:&\sbd&\sbd&1&\sbd\\
        24:&\sbd&\sbd&\sbd&\sbd\\
        25:&\sbd&\sbd&\sbd&1\\
        26:&\sbd&\sbd&\sbd&1\\
        27:&\sbd&\sbd&\sbd&\bd\\
        28:&\sbd&\sbd&\sbd&1\\
        29:&\sbd&\sbd&\sbd&\sbd\\
}
\hspace{0.01\linewidth}%
\BettiTable{0.15\linewidth}{2}{\underline{1},\underline{2},3,4,5}{0,4,6,7,9}{rrrr}{
        &0&1&2&3\\ \hline
        \text{total:}&1&6&8&3\\ \hline
        0:&1&\sbd&\sbd&\sbd\\
        1:&\sbd&\sbd&\sbd&\sbd\\
        2:&\sbd&\sbd&\sbd&\sbd\\
        3:&\sbd&\sbd&\sbd&\sbd\\
        4:&\sbd&\sbd&\sbd&\sbd\\
        5:&\sbd&1&\sbd&\sbd\\
        6:&\sbd&1&\sbd&\sbd\\
        7:&\sbd&2&\sbd&\sbd\\
        8:&\sbd&1&1&\sbd\\
        9:&\sbd&1&2&\sbd\\
        10:&\sbd&\sbd&2&\sbd\\
        11:&\sbd&\sbd&2&\sbd\\
        12:&\sbd&\sbd&1&1\\
        13:&\sbd&\sbd&\sbd&1\\
        14:&\sbd&\sbd&\sbd&1\\
        15:&\sbd&\sbd&\sbd&\sbd\\
}
\hspace{0.01\linewidth}%
\BettiTable{0.20\linewidth}{3}{\underline{1},2,\underline{2},3,3,3,4}{0,4,6,7,8,9,11}{rrrrrr}{
        &0&1&2&3&4&5\\ \hline
        \text{total:}&1&15&40&45&24&5\\ \hline
        0:&1&\sbd&\sbd&\sbd&\sbd&\sbd\\
        1:&\sbd&\sbd&\sbd&\sbd&\sbd&\sbd\\
        2:&\sbd&\sbd&\sbd&\sbd&\sbd&\sbd\\
        3:&\sbd&1&\sbd&\sbd&\sbd&\sbd\\
        4:&\sbd&3&\sbd&\sbd&\sbd&\sbd\\
        5:&\sbd&7&3&\sbd&\sbd&\sbd\\
        6:&\sbd&3&10&\sbd&\sbd&\sbd\\
        7:&\sbd&1&14&3&\sbd&\sbd\\
        8:&\sbd&\sbd&10&12&\sbd&\sbd\\
        9:&\sbd&\sbd&3&15&1&\sbd\\
        10:&\sbd&\sbd&\sbd&12&6&\sbd\\
        11:&\sbd&\sbd&\sbd&3&10&\sbd\\
        12:&\sbd&\sbd&\sbd&\sbd&6&1\\
        13:&\sbd&\sbd&\sbd&\sbd&1&3\\
        14:&\sbd&\sbd&\sbd&\sbd&\sbd&1\\
        15:&\sbd&\sbd&\sbd&\sbd&\sbd&\sbd\\
}
\hspace{0.01\linewidth}%
\BettiTable{0.15\linewidth}{4}{\underline{1},\underline{1},2,2,3}{0,4,6,7,9}{rrrr}{
        &0&1&2&3\\ \hline
        \text{total:}&1&6&8&3\\ \hline
        0:&1&\sbd&\sbd&\sbd\\
        1:&\sbd&\sbd&\sbd&\sbd\\
        2:&\sbd&\sbd&\sbd&\sbd\\
        3:&\sbd&3&\sbd&\sbd\\
        4:&\sbd&2&2&\sbd\\
        5:&\sbd&1&4&\sbd\\
        6:&\sbd&\sbd&2&2\\
        7:&\sbd&\sbd&\sbd&1\\
        8:&\sbd&\sbd&\sbd&\sbd\\
}
\hspace{0.01\linewidth}%
\BettiTable{0.20\linewidth}{5}{\underline{1},1,2,2,2,\underline{2},3}{0,4,6,7,9,10,15}{rrrrrr}{
        &0&1&2&3&4&5\\ \hline
        \text{total:}&1&15&40&45&24&5\\ \hline
        0:&1&\sbd&\sbd&\sbd&\sbd&\sbd\\
        1:&\sbd&\bd&\sbd&\sbd&\sbd&\sbd\\
        2:&\sbd&1&\bd&\sbd&\sbd&\sbd\\
        3:&\sbd&10&3&\bd&\sbd&\sbd\\
        4:&\sbd&3&21&3&\bd&\sbd\\
        5:&\sbd&1&13&18&1&\sbd\\
        6:&\sbd&\sbd&3&21&7&\bd\\
        7:&\sbd&\sbd&\sbd&3&15&1\\
        8:&\sbd&\sbd&\sbd&\sbd&1&4\\
        9:&\sbd&\sbd&\sbd&\sbd&\sbd&\sbd\\
}
\hspace{0.01\linewidth}%
\BettiTable{0.18\linewidth}{6}{\underline{1},1,\underline{1},2,2,2}{0,4,6,7,9,11}{rrrrr}{
        &0&1&2&3&4\\ \hline
        \text{total:}&1&10&20&15&4\\ \hline
        0:&1&\sbd&\sbd&\sbd&\sbd\\
        1:&\sbd&\bd&\sbd&\sbd&\sbd\\
        2:&\sbd&4&\bd&\sbd&\sbd\\
        3:&\sbd&6&12&\bd&\sbd\\
        4:&\sbd&\sbd&8&12&\bd\\
        5:&\sbd&\sbd&\sbd&3&4\\
        6:&\sbd&\sbd&\sbd&\sbd&\sbd\\
}
\hspace{0.01\linewidth}%
\BettiTable{0.15\linewidth}{7}{\underline{1},1,1,\underline{1},2}{0,4,6,7,9}{rrrr}{
        &0&1&2&3\\ \hline
        \text{total:}&1&7&10&4\\ \hline
        0:&1&\sbd&\sbd&\sbd\\
        1:&\sbd&\bd&\bd&\sbd\\
        2:&\sbd&6&4&\bd\\
        3:&\sbd&1&6&4\\
        4:&\sbd&\sbd&\sbd&\sbd\\
}
\hspace{0.01\linewidth}%
\BettiTable{0.15\linewidth}{8}{\underline{1},1,1,1,\underline{1}}{0,4,6,7,8}{rrrr}{
        &0&1&2&3\\ \hline
        \text{total:}&1&6&9&4\\ \hline
        0:&1&\sbd&\sbd&\sbd\\
        1:&\sbd&2&\bd&\bd\\
        2:&\sbd&4&9&4\\
        3:&\sbd&\sbd&\sbd&\sbd\\
}
\hspace{0.01\linewidth}%
\BettiTable{0.18\linewidth}{9}{\underline{1},1,1,1,1,\underline{1}}{0,4,6,7,8,9}{rrrrr}{
        &0&1&2&3&4\\ \hline
        \text{total:}&1&6&13&12&4\\ \hline
        0:&1&\sbd&\sbd&\sbd&\sbd\\
        1:&\sbd&6&4&\bd&\bd\\
        2:&\sbd&\bd&9&12&4\\
        3:&\sbd&\sbd&\sbd&\sbd&\sbd\\
}
\hspace{0.01\linewidth}%
%%%%%%%%%%%%%%%%%%%%%
\end{adjustwidth}
\caption{$C = \Proj \mathbb F_3[x_0..x_5]/(x_1x_3-x_0x_4-x_1x_4-x_3x_4-x_0x_5+x_2x_5,x_2^2-x_2x_3+x_3^2-x_0x_4-x_3x_4+x_4^2-x_0x_5+x_1x_5+x_2x_5-x_3x_5,x_1x_2-x_2x_3+x_2x_4+x_4^2+x_0x_5+x_1x_5-x_3x_5,x_0x_2+x_2x_3+x_3^2-x_0x_4-x_1x_4+x_2x_4-x_4^2+x_0x_5-x_1x_5+x_3x_5,x_1^2-x_0x_4,x_0x_1-x_0x_3+x_2x_3-x_0x_4+x_3x_4+x_0x_5-x_1x_5-x_2x_5,x_3^2x_4+x_1x_4^2-x_2x_4^2+x_0^2x_5-x_0x_3x_5-x_2x_3x_5+x_3^2x_5+x_0x_4x_5+x_2x_4x_5-x_3x_4x_5-x_4^2x_5+x_1x_5^2+x_2x_5^2+x_3x_5^2,x_2x_3x_4-x_1x_4^2-x_0x_3x_5-x_2x_3x_5-x_0x_4x_5-x_1x_4x_5-x_2x_4x_5+x_3x_4x_5-x_4^2x_5+x_1x_5^2-x_2x_5^2+x_3x_5^2,x_3^3-x_0x_3x_4-x_0x_4^2+x_1x_4^2+x_2x_4^2-x_4^3+x_2x_3x_5-x_3^2x_5+x_0x_4x_5+x_1x_4x_5+x_2x_4x_5-x_3x_4x_5-x_0x_5^2-x_2x_5^2,x_2x_3^2-x_0x_4^2-x_1x_4^2-x_3x_4^2-x_0^2x_5+x_0x_3x_5+x_2x_3x_5+x_0x_4x_5-x_1x_4x_5+x_2x_4x_5+x_0x_5^2-x_1x_5^2,x_0x_3^2-x_0^2x_4-x_0x_3x_4+x_0x_4^2+x_1x_4^2-x_2x_4^2-x_0^2x_5+x_2x_3x_5+x_1x_4x_5+x_2x_4x_5-x_3x_4x_5+x_4^2x_5+x_0x_5^2+x_2x_5^2-x_3x_5^2,x_0x_3x_4^2-x_0x_4^3-x_1x_4^3-x_2x_4^3+x_3x_4^3+x_4^4+x_0^2x_3x_5-x_0^2x_4x_5-x_1x_4^2x_5+x_2x_4^2x_5+x_0^2x_5^2+x_2x_3x_5^2-x_3^2x_5^2+x_0x_4x_5^2+x_0x_5^3,x_0^2x_4^2+x_0x_4^3-x_1x_4^3-x_2x_4^3+x_0^3x_5+x_0^2x_4x_5+x_0x_3x_4x_5-x_0x_4^2x_5-x_1x_4^2x_5-x_3x_4^2x_5-x_4^3x_5-x_0^2x_5^2-x_3^2x_5^2-x_0x_4x_5^2-x_1x_4x_5^2+x_4^2x_5^2-x_1x_5^3+x_2x_5^3-x_3x_5^3)$}
\label{fig:betti-genus-4-semigroup-4-6-7-9}
\end{figure}

\newpage

\begin{figure}[H]
\begin{adjustwidth}{-0.5in}{-0.3in}
\centering

$H(P) = \langle 4,5,7 \rangle$

%%%%%%%%%%%%%%%%%%%%%
% kk = ZZ/3
% H  = {4, 5, 7} -- genus 4
% R  = kk[x_0..x_5]/(x_2*x_4-x_1*x_5,x_0*x_4+x_4^2+x_1*x_5-x_2*x_5-x_3*x_5,x_2^2+x_1*x_3-x_3^2-x_1*x_4+x_4^2+x_1*x_5-x_3*x_5,x_1*x_2-x_0*x_5,x_1^2+x_4^2+x_1*x_5-x_2*x_5-x_3*x_5,x_0*x_1+x_1*x_3-x_2*x_3-x_3^2+x_4^2+x_0*x_5+x_1*x_5-x_3*x_5,x_1*x_3*x_4-x_3^2*x_4-x_1*x_4^2+x_4^3+x_1*x_4*x_5-x_3*x_4*x_5+x_0*x_5^2,x_2*x_3^2+x_3^3+x_3^2*x_4+x_4^3-x_0^2*x_5+x_0*x_2*x_5-x_0*x_3*x_5+x_1*x_3*x_5-x_3*x_4*x_5+x_4^2*x_5+x_0*x_5^2-x_1*x_5^2-x_2*x_5^2-x_3*x_5^2,x_1*x_3^2-x_1*x_4^2+x_3*x_4^2-x_4^3-x_0*x_2*x_5-x_1*x_3*x_5-x_2*x_3*x_5-x_3^2*x_5-x_1*x_4*x_5+x_3*x_4*x_5+x_4^2*x_5-x_1*x_5^2-x_2*x_5^2-x_3*x_5^2,x_0^2*x_2-x_0^2*x_3+x_0*x_3^2-x_3^3+x_3*x_4^2-x_4^3-x_0^2*x_5+x_0*x_2*x_5+x_0*x_3*x_5-x_1*x_3*x_5+x_3^2*x_5+x_1*x_4*x_5+x_3*x_4*x_5+x_0*x_5^2+x_1*x_5^2+x_2*x_5^2,x_3^3*x_4+x_3^2*x_4^2+x_4^4+x_3^2*x_4*x_5-x_1*x_4^2*x_5-x_3*x_4^2*x_5-x_0*x_3*x_5^2-x_2*x_3*x_5^2-x_3^2*x_5^2+x_1*x_4*x_5^2-x_4^2*x_5^2+x_1*x_5^3-x_2*x_5^3+x_3*x_5^3,x_0^2*x_3^3+x_0*x_3^4-x_3^5+x_4^5+x_0^4*x_5-x_0*x_3^3*x_5+x_3^4*x_5-x_3^2*x_4^2*x_5-x_1*x_4^3*x_5-x_4^4*x_5+x_0^2*x_3*x_5^2+x_3^3*x_5^2-x_3*x_4^2*x_5^2+x_4^3*x_5^2-x_0^2*x_5^3+x_1*x_3*x_5^3+x_2*x_3*x_5^3+x_1*x_4*x_5^3-x_3*x_4*x_5^3+x_0*x_5^4+x_1*x_5^4-x_2*x_5^4)
% pt = ideal(-x_0,-x_1,-x_2,-x_3,-x_4)
\BettiTable{0.15\linewidth}{1}{\underline{1},\underline{4},5,7}{0,4,5,7}{rrr}{
        &0&1&2\\ \hline
        \text{total:}&1&3&2\\ \hline
        0:&1&\sbd&\sbd\\
        1:&\sbd&\sbd&\sbd\\
        2:&\sbd&\sbd&\sbd\\
        3:&\sbd&\sbd&\sbd\\
        4:&\sbd&\sbd&\sbd\\
        5:&\sbd&\sbd&\sbd\\
        6:&\sbd&\sbd&\sbd\\
        7:&\sbd&\sbd&\sbd\\
        8:&\sbd&\sbd&\sbd\\
        9:&\sbd&\bd&\sbd\\
        10:&\sbd&\bd&\sbd\\
        11:&\sbd&1&\sbd\\
        12:&\sbd&\bd&\sbd\\
        13:&\sbd&1&\sbd\\
        14:&\sbd&1&\sbd\\
        15:&\sbd&\bd&\bd\\
        16:&\sbd&\bd&\bd\\
        17:&\sbd&\sbd&1\\
        18:&\sbd&\sbd&\bd\\
        19:&\sbd&\sbd&\bd\\
        20:&\sbd&\sbd&1\\
        21:&\sbd&\sbd&\sbd\\
}
\hspace{0.01\linewidth}%
\BettiTable{0.15\linewidth}{2}{\underline{1},\underline{2},3,4,5}{0,4,5,7,10}{rrrr}{
        &0&1&2&3\\ \hline
        \text{total:}&1&6&8&3\\ \hline
        0:&1&\sbd&\sbd&\sbd\\
        1:&\sbd&\sbd&\sbd&\sbd\\
        2:&\sbd&\sbd&\sbd&\sbd\\
        3:&\sbd&\sbd&\sbd&\sbd\\
        4:&\sbd&\sbd&\sbd&\sbd\\
        5:&\sbd&1&\sbd&\sbd\\
        6:&\sbd&1&\sbd&\sbd\\
        7:&\sbd&2&\sbd&\sbd\\
        8:&\sbd&1&1&\sbd\\
        9:&\sbd&1&2&\sbd\\
        10:&\sbd&\sbd&2&\sbd\\
        11:&\sbd&\sbd&2&\sbd\\
        12:&\sbd&\sbd&1&1\\
        13:&\sbd&\sbd&\sbd&1\\
        14:&\sbd&\sbd&\sbd&1\\
        15:&\sbd&\sbd&\sbd&\sbd\\
}
\hspace{0.01\linewidth}%
\BettiTable{0.25\linewidth}{3}{\underline{1},2,2,3,3,\underline{3},4,4,5}{0,4,5,7,8,9,11,12,15}{rrrrrrrr}{
        &0&1&2&3&4&5&6&7\\ \hline
        \text{total:}&1&28&112&210&224&140&48&7\\ \hline
        0:&1&\sbd&\sbd&\sbd&\sbd&\sbd&\sbd&\sbd\\
        1:&\sbd&\sbd&\sbd&\sbd&\sbd&\sbd&\sbd&\sbd\\
        2:&\sbd&\sbd&\sbd&\sbd&\sbd&\sbd&\sbd&\sbd\\
        3:&\sbd&2&\sbd&\sbd&\sbd&\sbd&\sbd&\sbd\\
        4:&\sbd&4&1&\sbd&\sbd&\sbd&\sbd&\sbd\\
        5:&\sbd&8&6&\sbd&\sbd&\sbd&\sbd&\sbd\\
        6:&\sbd&6&15&2&\sbd&\sbd&\sbd&\sbd\\
        7:&\sbd&5&23&8&\sbd&\sbd&\sbd&\sbd\\
        8:&\sbd&2&26&23&1&\sbd&\sbd&\sbd\\
        9:&\sbd&1&21&37&6&\sbd&\sbd&\sbd\\
        10:&\sbd&\sbd&12&47&18&\sbd&\sbd&\sbd\\
        11:&\sbd&\sbd&6&41&35&2&\sbd&\sbd\\
        12:&\sbd&\sbd&2&30&47&7&\sbd&\sbd\\
        13:&\sbd&\sbd&\sbd&15&48&18&\sbd&\sbd\\
        14:&\sbd&\sbd&\sbd&6&38&28&1&\sbd\\
        15:&\sbd&\sbd&\sbd&1&21&34&4&\sbd\\
        16:&\sbd&\sbd&\sbd&\sbd&8&27&9&\sbd\\
        17:&\sbd&\sbd&\sbd&\sbd&2&17&13&\sbd\\
        18:&\sbd&\sbd&\sbd&\sbd&\sbd&6&12&1\\
        19:&\sbd&\sbd&\sbd&\sbd&\sbd&1&7&2\\
        20:&\sbd&\sbd&\sbd&\sbd&\sbd&\sbd&2&3\\
        21:&\sbd&\sbd&\sbd&\sbd&\sbd&\sbd&\sbd&1\\
        22:&\sbd&\sbd&\sbd&\sbd&\sbd&\sbd&\sbd&\sbd\\
}
\hspace{0.01\linewidth}%
\BettiTable{0.15\linewidth}{4}{\underline{1},\underline{1},2,2,3}{0,4,5,7,10}{rrrr}{
        &0&1&2&3\\ \hline
        \text{total:}&1&6&8&3\\ \hline
        0:&1&\sbd&\sbd&\sbd\\
        1:&\sbd&\sbd&\sbd&\sbd\\
        2:&\sbd&\sbd&\sbd&\sbd\\
        3:&\sbd&3&\sbd&\sbd\\
        4:&\sbd&2&2&\sbd\\
        5:&\sbd&1&4&\sbd\\
        6:&\sbd&\sbd&2&2\\
        7:&\sbd&\sbd&\sbd&1\\
        8:&\sbd&\sbd&\sbd&\sbd\\
}
\hspace{0.01\linewidth}%
\BettiTable{0.15\linewidth}{5}{\underline{1},1,\underline{1},2}{0,4,5,7}{rrr}{
        &0&1&2\\ \hline
        \text{total:}&1&3&2\\ \hline
        0:&1&\sbd&\sbd\\
        1:&\sbd&\bd&\sbd\\
        2:&\sbd&1&\bd\\
        3:&\sbd&2&1\\
        4:&\sbd&\bd&1\\
        5:&\sbd&\sbd&\sbd\\
}
\hspace{0.01\linewidth}%
\BettiTable{0.20\linewidth}{6}{\underline{1},1,1,2,2,\underline{2},3}{0,4,5,7,11,12,18}{rrrrrr}{
        &0&1&2&3&4&5\\ \hline
        \text{total:}&1&15&40&45&24&5\\ \hline
        0:&1&\sbd&\sbd&\sbd&\sbd&\sbd\\
        1:&\sbd&\bd&\bd&\sbd&\sbd&\sbd\\
        2:&\sbd&5&1&\bd&\sbd&\sbd\\
        3:&\sbd&7&14&2&\bd&\sbd\\
        4:&\sbd&2&15&14&1&\sbd\\
        5:&\sbd&1&8&18&6&\bd\\
        6:&\sbd&\sbd&2&10&13&1\\
        7:&\sbd&\sbd&\sbd&1&4&4\\
        8:&\sbd&\sbd&\sbd&\sbd&\sbd&\sbd\\
}
\hspace{0.01\linewidth}%
\BettiTable{0.15\linewidth}{7}{\underline{1},1,1,\underline{1},2}{0,4,5,7,13}{rrrr}{
        &0&1&2&3\\ \hline
        \text{total:}&1&7&10&4\\ \hline
        0:&1&\sbd&\sbd&\sbd\\
        1:&\sbd&\bd&\bd&\sbd\\
        2:&\sbd&6&4&\bd\\
        3:&\sbd&1&6&4\\
        4:&\sbd&\sbd&\sbd&\sbd\\
}
\hspace{0.01\linewidth}%
\BettiTable{0.15\linewidth}{8}{\underline{1},1,1,1,\underline{1}}{0,4,5,7,8}{rrrr}{
        &0&1&2&3\\ \hline
        \text{total:}&1&6&9&4\\ \hline
        0:&1&\sbd&\sbd&\sbd\\
        1:&\sbd&2&\bd&\bd\\
        2:&\sbd&4&9&4\\
        3:&\sbd&\sbd&\sbd&\sbd\\
}
\hspace{0.01\linewidth}%
\BettiTable{0.18\linewidth}{9}{\underline{1},1,1,1,1,\underline{1}}{0,4,5,7,8,9}{rrrrr}{
        &0&1&2&3&4\\ \hline
        \text{total:}&1&7&14&12&4\\ \hline
        0:&1&\sbd&\sbd&\sbd&\sbd\\
        1:&\sbd&6&5&\bd&\bd\\
        2:&\sbd&1&9&12&4\\
        3:&\sbd&\sbd&\sbd&\sbd&\sbd\\
}
\hspace{0.01\linewidth}%
%%%%%%%%%%%%%%%%%%%%%
\end{adjustwidth}
\caption{$C = \Proj \mathbb F_3[x_0..x_5]/(x_2x_4-x_1x_5,x_0x_4+x_4^2+x_1x_5-x_2x_5-x_3x_5,x_2^2+x_1x_3-x_3^2-x_1x_4+x_4^2+x_1x_5-x_3x_5,x_1x_2-x_0x_5,x_1^2+x_4^2+x_1x_5-x_2x_5-x_3x_5,x_0x_1+x_1x_3-x_2x_3-x_3^2+x_4^2+x_0x_5+x_1x_5-x_3x_5,x_1x_3x_4-x_3^2x_4-x_1x_4^2+x_4^3+x_1x_4x_5-x_3x_4x_5+x_0x_5^2,x_2x_3^2+x_3^3+x_3^2x_4+x_4^3-x_0^2x_5+x_0x_2x_5-x_0x_3x_5+x_1x_3x_5-x_3x_4x_5+x_4^2x_5+x_0x_5^2-x_1x_5^2-x_2x_5^2-x_3x_5^2,x_1x_3^2-x_1x_4^2+x_3x_4^2-x_4^3-x_0x_2x_5-x_1x_3x_5-x_2x_3x_5-x_3^2x_5-x_1x_4x_5+x_3x_4x_5+x_4^2x_5-x_1x_5^2-x_2x_5^2-x_3x_5^2,x_0^2x_2-x_0^2x_3+x_0x_3^2-x_3^3+x_3x_4^2-x_4^3-x_0^2x_5+x_0x_2x_5+x_0x_3x_5-x_1x_3x_5+x_3^2x_5+x_1x_4x_5+x_3x_4x_5+x_0x_5^2+x_1x_5^2+x_2x_5^2,x_3^3x_4+x_3^2x_4^2+x_4^4+x_3^2x_4x_5-x_1x_4^2x_5-x_3x_4^2x_5-x_0x_3x_5^2-x_2x_3x_5^2-x_3^2x_5^2+x_1x_4x_5^2-x_4^2x_5^2+x_1x_5^3-x_2x_5^3+x_3x_5^3,x_0^2x_3^3+x_0x_3^4-x_3^5+x_4^5+x_0^4x_5-x_0x_3^3x_5+x_3^4x_5-x_3^2x_4^2x_5-x_1x_4^3x_5-x_4^4x_5+x_0^2x_3x_5^2+x_3^3x_5^2-x_3x_4^2x_5^2+x_4^3x_5^2-x_0^2x_5^3+x_1x_3x_5^3+x_2x_3x_5^3+x_1x_4x_5^3-x_3x_4x_5^3+x_0x_5^4+x_1x_5^4-x_2x_5^4)$}
\caption{$C = \Proj R_0$, $H(P) = \langle 4, 5, 7\rangle$}
\label{fig:betti-genus-4-semigroup-4-5-7}
\end{figure}

\newpage

\begin{figure}[H]
\begin{adjustwidth}{-0.5in}{-0.3in}
\centering

$H(P) = \langle 3,7,8\rangle$

%%%%%%%%%%%%%%%%%%%%%
% kk = ZZ/3
% H  = {3, 7, 8} -- genus 4
% R  = kk[x_0..x_5]/(x_2^2-x_1*x_5,x_1*x_2-x_0*x_5,x_0*x_2-x_2*x_3-x_3^2-x_1*x_4+x_2*x_4+x_4^2-x_0*x_5-x_1*x_5+x_2*x_5-x_3*x_5,x_1^2-x_2*x_3-x_3^2-x_1*x_4+x_2*x_4+x_4^2-x_0*x_5-x_1*x_5+x_2*x_5-x_3*x_5,x_0*x_1-x_0*x_3+x_0*x_4-x_1*x_4+x_3*x_4-x_4^2+x_1*x_5+x_2*x_5+x_3*x_5,x_0^2+x_0*x_3+x_1*x_3+x_2*x_3-x_3^2-x_1*x_4+x_2*x_4+x_3*x_4-x_4^2+x_0*x_5+x_2*x_5+x_3*x_5,x_3^3+x_1*x_3*x_4-x_3^2*x_4-x_1*x_4^2-x_3*x_4^2+x_4^3+x_1*x_3*x_5-x_3^2*x_5+x_0*x_4*x_5-x_1*x_4*x_5-x_3*x_4*x_5+x_4^2*x_5+x_0*x_5^2+x_1*x_5^2+x_2*x_5^2-x_3*x_5^2,x_2*x_3^2-x_2*x_4^2-x_0*x_3*x_5+x_1*x_3*x_5-x_2*x_3*x_5+x_3^2*x_5-x_0*x_4*x_5-x_1*x_4*x_5-x_2*x_4*x_5+x_3*x_4*x_5+x_4^2*x_5-x_0*x_5^2+x_1*x_5^2-x_3*x_5^2,x_1*x_3^2+x_2*x_3*x_4+x_3^2*x_4-x_2*x_4^2-x_4^3-x_1*x_3*x_5-x_2*x_3*x_5-x_0*x_4*x_5-x_1*x_4*x_5-x_2*x_4*x_5+x_3*x_4*x_5-x_4^2*x_5+x_0*x_5^2-x_2*x_5^2+x_3*x_5^2,x_0*x_3^2+x_0*x_3*x_4+x_2*x_3*x_4+x_3^2*x_4+x_0*x_4^2-x_1*x_4^2+x_2*x_4^2-x_3*x_4^2-x_0*x_3*x_5-x_1*x_3*x_5-x_2*x_3*x_5+x_3^2*x_5-x_0*x_4*x_5+x_1*x_4*x_5-x_4^2*x_5+x_0*x_5^2-x_2*x_5^2+x_3*x_5^2)
% pt = ideal(-x_0,-x_1,-x_2,-x_3,-x_4)
\BettiTable{0.15\linewidth}{1}{\underline{1},\underline{3},7,8}{0,3,7,8}{rrr}{
        &0&1&2\\ \hline
        \text{total:}&1&3&2\\ \hline
        0:&1&\sbd&\sbd\\
        1:&\sbd&\sbd&\sbd\\
        2:&\sbd&\sbd&\sbd\\
        3:&\sbd&\sbd&\sbd\\
        4:&\sbd&\sbd&\sbd\\
        5:&\sbd&\sbd&\sbd\\
        6:&\sbd&\sbd&\sbd\\
        7:&\sbd&\sbd&\sbd\\
        8:&\sbd&\sbd&\sbd\\
        9:&\sbd&\sbd&\sbd\\
        10:&\sbd&\sbd&\sbd\\
        11:&\sbd&\sbd&\sbd\\
        12:&\sbd&\sbd&\sbd\\
        13:&\sbd&1&\sbd\\
        14:&\sbd&1&\sbd\\
        15:&\sbd&1&\sbd\\
        16:&\sbd&\sbd&\sbd\\
        17:&\sbd&\sbd&\sbd\\
        18:&\sbd&\sbd&\sbd\\
        19:&\sbd&\sbd&\sbd\\
        20:&\sbd&\sbd&1\\
        21:&\sbd&\sbd&1\\
        22:&\sbd&\sbd&\sbd\\
}
\hspace{0.01\linewidth}%
\BettiTable{0.18\linewidth}{2}{\underline{1},2,\underline{3},4,4,5}{0,3,6,7,8,10}{rrrrr}{
        &0&1&2&3&4\\ \hline
        \text{total:}&1&9&17&12&3\\ \hline
        0:&1&\sbd&\sbd&\sbd&\sbd\\
        1:&\sbd&\sbd&\sbd&\sbd&\sbd\\
        2:&\sbd&\sbd&\sbd&\sbd&\sbd\\
        3:&\sbd&1&\sbd&\sbd&\sbd\\
        4:&\sbd&\bd&\sbd&\sbd&\sbd\\
        5:&\sbd&1&\sbd&\sbd&\sbd\\
        6:&\sbd&1&1&\sbd&\sbd\\
        7:&\sbd&3&1&\sbd&\sbd\\
        8:&\sbd&2&2&\sbd&\sbd\\
        9:&\sbd&1&3&\bd&\sbd\\
        10:&\sbd&\bd&4&1&\sbd\\
        11:&\sbd&\sbd&4&1&\sbd\\
        12:&\sbd&\sbd&2&3&\sbd\\
        13:&\sbd&\sbd&\bd&3&\bd\\
        14:&\sbd&\sbd&\sbd&3&\bd\\
        15:&\sbd&\sbd&\sbd&1&1\\
        16:&\sbd&\sbd&\sbd&\bd&1\\
        17:&\sbd&\sbd&\sbd&\sbd&1\\
        18:&\sbd&\sbd&\sbd&\sbd&\sbd\\
}
\hspace{0.01\linewidth}%
\BettiTable{0.15\linewidth}{3}{\underline{1},\underline{1},3,3}{0,3,7,8}{rrr}{
        &0&1&2\\ \hline
        \text{total:}&1&3&2\\ \hline
        0:&1&\sbd&\sbd\\
        1:&\sbd&\sbd&\sbd\\
        2:&\sbd&\sbd&\sbd\\
        3:&\sbd&\sbd&\sbd\\
        4:&\sbd&\sbd&\sbd\\
        5:&\sbd&3&\sbd\\
        6:&\sbd&\sbd&\sbd\\
        7:&\sbd&\sbd&2\\
        8:&\sbd&\sbd&\sbd\\
}
\hspace{0.01\linewidth}%
\BettiTable{0.15\linewidth}{4}{\underline{1},1,2,\underline{2},3}{0,3,7,8,12}{rrrr}{
        &0&1&2&3\\ \hline
        \text{total:}&1&6&8&3\\ \hline
        0:&1&\sbd&\sbd&\sbd\\
        1:&\sbd&\bd&\sbd&\sbd\\
        2:&\sbd&\bd&\bd&\sbd\\
        3:&\sbd&3&\bd&\sbd\\
        4:&\sbd&2&2&\bd\\
        5:&\sbd&1&4&\bd\\
        6:&\sbd&\bd&2&2\\
        7:&\sbd&\sbd&\bd&1\\
        8:&\sbd&\sbd&\sbd&\sbd\\
}
\hspace{0.01\linewidth}%
\BettiTable{0.20\linewidth}{5}{\underline{1},1,2,2,2,\underline{2},3,3}{0,3,7,8,9,10,14,15}{rrrrrrr}{
        &0&1&2&3&4&5&6\\ \hline
        \text{total:}&1&21&70&105&84&35&6\\ \hline
        0:&1&\sbd&\sbd&\sbd&\sbd&\sbd&\sbd\\
        1:&\sbd&\bd&\sbd&\sbd&\sbd&\sbd&\sbd\\
        2:&\sbd&2&\bd&\sbd&\sbd&\sbd&\sbd\\
        3:&\sbd&10&6&\bd&\sbd&\sbd&\sbd\\
        4:&\sbd&6&24&6&\bd&\sbd&\sbd\\
        5:&\sbd&3&26&27&2&\sbd&\sbd\\
        6:&\sbd&\sbd&12&44&16&\bd&\sbd\\
        7:&\sbd&\sbd&2&22&36&4&\sbd\\
        8:&\sbd&\sbd&\sbd&6&24&14&\bd\\
        9:&\sbd&\sbd&\sbd&\sbd&6&15&2\\
        10:&\sbd&\sbd&\sbd&\sbd&\sbd&2&4\\
        11:&\sbd&\sbd&\sbd&\sbd&\sbd&\sbd&\sbd\\
}
\hspace{0.01\linewidth}%
\BettiTable{0.20\linewidth}{6}{\underline{1},1,\underline{1},2,2,2,2}{0,3,6,7,8,10,11}{rrrrrr}{
        &0&1&2&3&4&5\\ \hline
        \text{total:}&1&15&40&45&24&5\\ \hline
        0:&1&\sbd&\sbd&\sbd&\sbd&\sbd\\
        1:&\sbd&1&\sbd&\sbd&\sbd&\sbd\\
        2:&\sbd&4&4&\sbd&\sbd&\sbd\\
        3:&\sbd&10&16&6&\sbd&\sbd\\
        4:&\sbd&\sbd&20&24&4&\sbd\\
        5:&\sbd&\sbd&\sbd&15&16&1\\
        6:&\sbd&\sbd&\sbd&\sbd&4&4\\
        7:&\sbd&\sbd&\sbd&\sbd&\sbd&\sbd\\
}
\hspace{0.01\linewidth}%
\BettiTable{0.18\linewidth}{7}{\underline{1},1,1,\underline{1},2,2}{0,3,6,7,8,11}{rrrrr}{
        &0&1&2&3&4\\ \hline
        \text{total:}&1&10&20&15&4\\ \hline
        0:&1&\sbd&\sbd&\sbd&\sbd\\
        1:&\sbd&1&\bd&\sbd&\sbd\\
        2:&\sbd&6&6&\bd&\sbd\\
        3:&\sbd&3&12&9&\bd\\
        4:&\sbd&\sbd&2&6&4\\
        5:&\sbd&\sbd&\sbd&\sbd&\sbd\\
}
\hspace{0.01\linewidth}%
\BettiTable{0.15\linewidth}{8}{\underline{1},1,1,1,\underline{1}}{0,3,6,7,8}{rrrr}{
        &0&1&2&3\\ \hline
        \text{total:}&1&6&9&4\\ \hline
        0:&1&\sbd&\sbd&\sbd\\
        1:&\sbd&2&\bd&\bd\\
        2:&\sbd&4&9&4\\
        3:&\sbd&\sbd&\sbd&\sbd\\
}
\hspace{0.01\linewidth}%
\BettiTable{0.18\linewidth}{9}{\underline{1},1,1,1,1,\underline{1}}{0,3,6,7,8,9}{rrrrr}{
        &0&1&2&3&4\\ \hline
        \text{total:}&1&6&13&12&4\\ \hline
        0:&1&\sbd&\sbd&\sbd&\sbd\\
        1:&\sbd&6&4&\bd&\bd\\
        2:&\sbd&\bd&9&12&4\\
        3:&\sbd&\sbd&\sbd&\sbd&\sbd\\
}
\hspace{0.01\linewidth}%
%%%%%%%%%%%%%%%%%%%%%
\end{adjustwidth}
\caption{$C = \Proj \mathbb F_3[x_0..x_5]/(x_2^2-x_1x_5,x_1x_2-x_0x_5,x_0x_2-x_2x_3-x_3^2-x_1x_4+x_2x_4+x_4^2-x_0x_5-x_1x_5+x_2x_5-x_3x_5,x_1^2-x_2x_3-x_3^2-x_1x_4+x_2x_4+x_4^2-x_0x_5-x_1x_5+x_2x_5-x_3x_5,x_0x_1-x_0x_3+x_0x_4-x_1x_4+x_3x_4-x_4^2+x_1x_5+x_2x_5+x_3x_5,x_0^2+x_0x_3+x_1x_3+x_2x_3-x_3^2-x_1x_4+x_2x_4+x_3x_4-x_4^2+x_0x_5+x_2x_5+x_3x_5,x_3^3+x_1x_3x_4-x_3^2x_4-x_1x_4^2-x_3x_4^2+x_4^3+x_1x_3x_5-x_3^2x_5+x_0x_4x_5-x_1x_4x_5-x_3x_4x_5+x_4^2x_5+x_0x_5^2+x_1x_5^2+x_2x_5^2-x_3x_5^2,x_2x_3^2-x_2x_4^2-x_0x_3x_5+x_1x_3x_5-x_2x_3x_5+x_3^2x_5-x_0x_4x_5-x_1x_4x_5-x_2x_4x_5+x_3x_4x_5+x_4^2x_5-x_0x_5^2+x_1x_5^2-x_3x_5^2,x_1x_3^2+x_2x_3x_4+x_3^2x_4-x_2x_4^2-x_4^3-x_1x_3x_5-x_2x_3x_5-x_0x_4x_5-x_1x_4x_5-x_2x_4x_5+x_3x_4x_5-x_4^2x_5+x_0x_5^2-x_2x_5^2+x_3x_5^2,x_0x_3^2+x_0x_3x_4+x_2x_3x_4+x_3^2x_4+x_0x_4^2-x_1x_4^2+x_2x_4^2-x_3x_4^2-x_0x_3x_5-x_1x_3x_5-x_2x_3x_5+x_3^2x_5-x_0x_4x_5+x_1x_4x_5-x_4^2x_5+x_0x_5^2-x_2x_5^2+x_3x_5^2)$}
\caption{$C = \Proj R_0$, $H(P) = \langle 3, 7, 8\rangle$}
\label{fig:betti-genus-4-semigroup-3-7-8}
\end{figure}

\newpage

\begin{figure}[H]
\begin{adjustwidth}{-0.5in}{-0.3in}
\centering

$H(P) = \langle 3,5\rangle$

%%%%%%%%%%%%%%%%%%%%%
% kk = ZZ/3
% H  = {3, 5} -- genus 4
% R  = kk[x_0..x_5]/(x_3*x_4-x_2*x_5,x_3^2-x_1*x_5,x_2*x_3-x_1*x_4,x_1*x_3-x_0*x_5,x_1*x_2-x_0*x_4,x_1^2-x_0*x_3,x_4^3-x_0^2*x_5-x_2^2*x_5-x_1*x_4*x_5-x_2*x_4*x_5+x_2*x_5^2-x_3*x_5^2,x_1*x_4^2-x_2^2*x_5,x_0^2*x_3+x_0*x_4^2-x_2*x_4^2+x_2^2*x_5+x_0*x_4*x_5-x_1*x_4*x_5+x_1*x_5^2,x_0^2*x_1+x_0*x_2*x_4-x_2^2*x_4+x_0*x_4^2+x_0*x_2*x_5-x_0*x_4*x_5+x_0*x_5^2,x_0^3+x_0*x_2^2-x_2^3+x_0*x_1*x_4+x_0*x_2*x_4-x_0*x_2*x_5+x_0*x_3*x_5)
% pt = ideal(-x_0,-x_1,-x_2,-x_3,-x_4)
\BettiTable{0.15\linewidth}{1}{\underline{1},\underline{3},5}{0,3,5}{rr}{
        &0&1\\ \hline
        \text{total:}&1&1\\ \hline
        0:&1&\sbd\\
        1:&\sbd&\sbd\\
        2:&\sbd&\sbd\\
        3:&\sbd&\sbd\\
        4:&\sbd&\sbd\\
        5:&\sbd&\sbd\\
        6:&\sbd&\sbd\\
        7:&\sbd&\sbd\\
        8:&\sbd&\sbd\\
        9:&\sbd&\bd\\
        10:&\sbd&\bd\\
        11:&\sbd&\bd\\
        12:&\sbd&\bd\\
        13:&\sbd&\bd\\
        14:&\sbd&1\\
        15:&\sbd&\sbd\\
}
\hspace{0.01\linewidth}%
\BettiTable{0.18\linewidth}{2}{\underline{1},2,3,\underline{3},4,5}{0,3,5,6,8,10}{rrrrr}{
        &0&1&2&3&4\\ \hline
        \text{total:}&1&9&16&9&1\\ \hline
        0:&1&\sbd&\sbd&\sbd&\sbd\\
        1:&\sbd&\sbd&\sbd&\sbd&\sbd\\
        2:&\sbd&\sbd&\sbd&\sbd&\sbd\\
        3:&\sbd&1&\sbd&\sbd&\sbd\\
        4:&\sbd&1&\sbd&\sbd&\sbd\\
        5:&\sbd&2&1&\sbd&\sbd\\
        6:&\sbd&1&2&\sbd&\sbd\\
        7:&\sbd&2&2&\sbd&\sbd\\
        8:&\sbd&1&3&1&\sbd\\
        9:&\sbd&1&3&1&\sbd\\
        10:&\sbd&\bd&2&2&\sbd\\
        11:&\sbd&\bd&2&1&\sbd\\
        12:&\sbd&\sbd&1&2&\bd\\
        13:&\sbd&\sbd&\bd&1&\bd\\
        14:&\sbd&\sbd&\bd&1&\bd\\
        15:&\sbd&\sbd&\sbd&\bd&\bd\\
        16:&\sbd&\sbd&\sbd&\bd&\bd\\
        17:&\sbd&\sbd&\sbd&\sbd&1\\
        18:&\sbd&\sbd&\sbd&\sbd&\sbd\\
}
\hspace{0.01\linewidth}%
\BettiTable{0.15\linewidth}{3}{\underline{1},\underline{1},2}{0,3,5}{rr}{
        &0&1\\ \hline
        \text{total:}&1&1\\ \hline
        0:&1&\sbd\\
        1:&\sbd&\sbd\\
        2:&\sbd&\sbd\\
        3:&\sbd&\bd\\
        4:&\sbd&\bd\\
        5:&\sbd&1\\
        6:&\sbd&\sbd\\
}
\hspace{0.01\linewidth}%
\BettiTable{0.18\linewidth}{4}{\underline{1},1,2,\underline{2},3,3}{0,3,5,8,10,12}{rrrrr}{
        &0&1&2&3&4\\ \hline
        \text{total:}&1&10&20&15&4\\ \hline
        0:&1&\sbd&\sbd&\sbd&\sbd\\
        1:&\sbd&\bd&\sbd&\sbd&\sbd\\
        2:&\sbd&1&\bd&\sbd&\sbd\\
        3:&\sbd&3&1&\sbd&\sbd\\
        4:&\sbd&3&4&\bd&\sbd\\
        5:&\sbd&3&7&2&\sbd\\
        6:&\sbd&\bd&6&5&\bd\\
        7:&\sbd&\sbd&2&5&1\\
        8:&\sbd&\sbd&\bd&3&2\\
        9:&\sbd&\sbd&\sbd&\bd&1\\
        10:&\sbd&\sbd&\sbd&\sbd&\sbd\\
}
\hspace{0.01\linewidth}%
\BettiTable{0.15\linewidth}{5}{\underline{1},1,\underline{1},2}{0,3,5,9}{rrr}{
        &0&1&2\\ \hline
        \text{total:}&1&3&2\\ \hline
        0:&1&\sbd&\sbd\\
        1:&\sbd&\bd&\sbd\\
        2:&\sbd&1&\bd\\
        3:&\sbd&2&1\\
        4:&\sbd&\bd&1\\
        5:&\sbd&\sbd&\sbd\\
}
\hspace{0.01\linewidth}%
\BettiTable{0.15\linewidth}{6}{\underline{1},1,1,\underline{1}}{0,3,5,6}{rrr}{
        &0&1&2\\ \hline
        \text{total:}&1&2&1\\ \hline
        0:&1&\sbd&\sbd\\
        1:&\sbd&1&\bd\\
        2:&\sbd&1&\bd\\
        3:&\sbd&\bd&1\\
        4:&\sbd&\sbd&\sbd\\
}
\hspace{0.01\linewidth}%
\BettiTable{0.20\linewidth}{7}{\underline{1},1,1,1,2,\underline{2},3}{0,3,5,6,13,14,21}{rrrrrr}{
        &0&1&2&3&4&5\\ \hline
        \text{total:}&1&15&40&45&24&5\\ \hline
        0:&1&\sbd&\sbd&\sbd&\sbd&\sbd\\
        1:&\sbd&1&\bd&\bd&\sbd&\sbd\\
        2:&\sbd&7&8&1&\bd&\sbd\\
        3:&\sbd&5&15&11&1&\sbd\\
        4:&\sbd&1&11&15&5&\bd\\
        5:&\sbd&1&5&14&11&1\\
        6:&\sbd&\sbd&1&4&7&4\\
        7:&\sbd&\sbd&\sbd&\sbd&\sbd&\sbd\\
}
\hspace{0.01\linewidth}%
\BettiTable{0.18\linewidth}{8}{\underline{1},1,1,1,\underline{1},2}{0,3,5,6,8,15}{rrrrr}{
        &0&1&2&3&4\\ \hline
        \text{total:}&1&10&20&15&4\\ \hline
        0:&1&\sbd&\sbd&\sbd&\sbd\\
        1:&\sbd&3&2&\bd&\sbd\\
        2:&\sbd&6&12&6&\bd\\
        3:&\sbd&1&6&9&4\\
        4:&\sbd&\sbd&\sbd&\sbd&\sbd\\
}
\hspace{0.01\linewidth}%
\BettiTable{0.18\linewidth}{9}{\underline{1},1,1,1,1,\underline{1}}{0,3,5,6,8,9}{rrrrr}{
        &0&1&2&3&4\\ \hline
        \text{total:}&1&10&20&15&4\\ \hline
        0:&1&\sbd&\sbd&\sbd&\sbd\\
        1:&\sbd&6&8&3&\bd\\
        2:&\sbd&4&12&12&4\\
        3:&\sbd&\sbd&\sbd&\sbd&\sbd\\
}
\hspace{0.01\linewidth}%
%%%%%%%%%%%%%%%%%%%%%
\end{adjustwidth}
\caption{$C = \Proj \mathbb F_3[x_0..x_5]/(x_3x_4-x_2x_5,x_3^2-x_1x_5,x_2x_3-x_1x_4,x_1x_3-x_0x_5,x_1x_2-x_0x_4,x_1^2-x_0x_3,x_4^3-x_0^2x_5-x_2^2x_5-x_1x_4x_5-x_2x_4x_5+x_2x_5^2-x_3x_5^2,x_1x_4^2-x_2^2x_5,x_0^2x_3+x_0x_4^2-x_2x_4^2+x_2^2x_5+x_0x_4x_5-x_1x_4x_5+x_1x_5^2,x_0^2x_1+x_0x_2x_4-x_2^2x_4+x_0x_4^2+x_0x_2x_5-x_0x_4x_5+x_0x_5^2,x_0^3+x_0x_2^2-x_2^3+x_0x_1x_4+x_0x_2x_4-x_0x_2x_5+x_0x_3x_5)$ }
\label{fig:betti-genus-4-semigroup-3-5}
\end{figure}

\newpage

\begin{figure}[H]
\begin{adjustwidth}{-0.5in}{-0.3in}
\centering

$H(P) = \langle 2,9\rangle$

%%%%%%%%%%%%%%%%%%%%%
% kk = ZZ/3
% H  = {2, 9} -- genus 4
% R  = kk[x_0..x_5]/(x_3^2-x_2*x_4,x_2*x_3-x_1*x_4,x_1*x_3-x_0*x_4,x_2^2-x_0*x_4,x_1*x_2-x_0*x_3,x_1^2-x_0*x_2,x_0^2*x_4-x_0*x_1*x_4-x_1*x_4^2+x_2*x_4^2-x_4^3+x_3*x_5^2,x_0^2*x_3-x_0*x_1*x_4-x_0*x_4^2+x_2*x_4^2-x_3*x_4^2-x_4^3+x_2*x_5^2+x_3*x_5^2,x_0^2*x_2-x_0*x_1*x_4-x_0*x_3*x_4-x_3*x_4^2-x_4^3+x_1*x_5^2+x_2*x_5^2+x_3*x_5^2,x_0^2*x_1-x_0*x_1*x_4-x_0*x_2*x_4-x_1*x_4^2-x_3*x_4^2-x_4^3+x_0*x_5^2+x_1*x_5^2+x_2*x_5^2+x_3*x_5^2)
% pt = ideal(-x_0,-x_1,-x_2,-x_3,-x_4)
\BettiTable{0.15\linewidth}{1}{\underline{1},\underline{2},9}{0,2,9}{rr}{
        &0&1\\ \hline
        \text{total:}&1&1\\ \hline
        0:&1&\sbd\\
        1:&\sbd&\sbd\\
        2:&\sbd&\sbd\\
        3:&\sbd&\sbd\\
        4:&\sbd&\sbd\\
        5:&\sbd&\sbd\\
        6:&\sbd&\sbd\\
        7:&\sbd&\sbd\\
        8:&\sbd&\sbd\\
        9:&\sbd&\sbd\\
        10:&\sbd&\sbd\\
        11:&\sbd&\sbd\\
        12:&\sbd&\sbd\\
        13:&\sbd&\sbd\\
        14:&\sbd&\sbd\\
        15:&\sbd&\sbd\\
        16:&\sbd&\sbd\\
        17:&\sbd&1\\
        18:&\sbd&\sbd\\
}
\hspace{0.01\linewidth}%
\BettiTable{0.15\linewidth}{2}{\underline{1},\underline{1},5}{0,2,9}{rr}{
        &0&1\\ \hline
        \text{total:}&1&1\\ \hline
        0:&1&\sbd\\
        1:&\sbd&\sbd\\
        2:&\sbd&\sbd\\
        3:&\sbd&\sbd\\
        4:&\sbd&\sbd\\
        5:&\sbd&\sbd\\
        6:&\sbd&\sbd\\
        7:&\sbd&\sbd\\
        8:&\sbd&\sbd\\
        9:&\sbd&1\\
        10:&\sbd&\sbd\\
}
\hspace{0.01\linewidth}%
\BettiTable{0.15\linewidth}{3}{\underline{1},1,\underline{2},3}{0,2,6,9}{rrr}{
        &0&1&2\\ \hline
        \text{total:}&1&2&1\\ \hline
        0:&1&\sbd&\sbd\\
        1:&\sbd&\bd&\sbd\\
        2:&\sbd&1&\sbd\\
        3:&\sbd&\bd&\bd\\
        4:&\sbd&\bd&\bd\\
        5:&\sbd&1&\bd\\
        6:&\sbd&\bd&\bd\\
        7:&\sbd&\bd&1\\
        8:&\sbd&\sbd&\sbd\\
}
\hspace{0.01\linewidth}%
\BettiTable{0.15\linewidth}{4}{\underline{1},1,\underline{1},3,3}{0,2,4,9,11}{rrrr}{
        &0&1&2&3\\ \hline
        \text{total:}&1&6&8&3\\ \hline
        0:&1&\sbd&\sbd&\sbd\\
        1:&\sbd&1&\sbd&\sbd\\
        2:&\sbd&\bd&\sbd&\sbd\\
        3:&\sbd&2&2&\sbd\\
        4:&\sbd&\bd&\bd&\sbd\\
        5:&\sbd&3&4&1\\
        6:&\sbd&\sbd&\bd&\bd\\
        7:&\sbd&\sbd&2&2\\
        8:&\sbd&\sbd&\sbd&\sbd\\
}
\hspace{0.01\linewidth}%
\BettiTable{0.18\linewidth}{5}{\underline{1},1,1,2,\underline{2},3}{0,2,4,9,10,15}{rrrrr}{
        &0&1&2&3&4\\ \hline
        \text{total:}&1&9&17&12&3\\ \hline
        0:&1&\sbd&\sbd&\sbd&\sbd\\
        1:&\sbd&1&\bd&\sbd&\sbd\\
        2:&\sbd&2&2&\bd&\sbd\\
        3:&\sbd&4&4&1&\sbd\\
        4:&\sbd&1&6&3&\bd\\
        5:&\sbd&1&4&4&1\\
        6:&\sbd&\bd&1&4&1\\
        7:&\sbd&\sbd&\bd&\bd&1\\
        8:&\sbd&\sbd&\sbd&\sbd&\sbd\\
}
\hspace{0.01\linewidth}%
\BettiTable{0.18\linewidth}{6}{\underline{1},1,1,\underline{1},2,2}{0,2,4,6,9,11}{rrrrr}{
        &0&1&2&3&4\\ \hline
        \text{total:}&1&9&16&9&1\\ \hline
        0:&1&\sbd&\sbd&\sbd&\sbd\\
        1:&\sbd&3&2&\sbd&\sbd\\
        2:&\sbd&3&6&3&\sbd\\
        3:&\sbd&3&6&3&\bd\\
        4:&\sbd&\bd&2&3&\bd\\
        5:&\sbd&\sbd&\bd&\bd&1\\
        6:&\sbd&\sbd&\sbd&\sbd&\sbd\\
}
\hspace{0.01\linewidth}%
\BettiTable{0.25\linewidth}{7}{\underline{1},1,1,1,2,2,2,\underline{2},3}{0,2,4,6,9,11,13,14,21}{rrrrrrrr}{
        &0&1&2&3&4&5&6&7\\ \hline
        \text{total:}&1&28&112&210&224&140&48&7\\ \hline
        0:&1&\sbd&\sbd&\sbd&\sbd&\sbd&\sbd&\sbd\\
        1:&\sbd&3&2&\bd&\sbd&\sbd&\sbd&\sbd\\
        2:&\sbd&9&18&9&\bd&\sbd&\sbd&\sbd\\
        3:&\sbd&12&39&42&15&\bd&\sbd&\sbd\\
        4:&\sbd&3&35&72&51&11&\bd&\sbd\\
        5:&\sbd&1&15&57&76&36&3&\sbd\\
        6:&\sbd&\sbd&3&27&60&51&15&\bd\\
        7:&\sbd&\sbd&\sbd&3&21&36&21&3\\
        8:&\sbd&\sbd&\sbd&\sbd&1&6&9&4\\
        9:&\sbd&\sbd&\sbd&\sbd&\sbd&\sbd&\sbd&\sbd\\
}
\hspace{0.01\linewidth}%
\BettiTable{0.25\linewidth}{8}{\underline{1},1,1,1,\underline{1},2,2,2,2}{0,2,4,6,8,9,11,13,15}{rrrrrrrr}{
        &0&1&2&3&4&5&6&7\\ \hline
        \text{total:}&1&28&112&210&224&140&48&7\\ \hline
        0:&1&\sbd&\sbd&\sbd&\sbd&\sbd&\sbd&\sbd\\
        1:&\sbd&6&8&3&\sbd&\sbd&\sbd&\sbd\\
        2:&\sbd&12&36&36&12&\sbd&\sbd&\sbd\\
        3:&\sbd&10&48&84&64&18&\sbd&\sbd\\
        4:&\sbd&\sbd&20&72&96&56&12&\sbd\\
        5:&\sbd&\sbd&\sbd&15&48&54&24&3\\
        6:&\sbd&\sbd&\sbd&\sbd&4&12&12&4\\
        7:&\sbd&\sbd&\sbd&\sbd&\sbd&\sbd&\sbd&\sbd\\
}
\hspace{0.01\linewidth}%
\BettiTable{0.18\linewidth}{9}{\underline{1},1,1,1,1,\underline{1}}{0,2,4,6,8,9}{rrrrr}{
        &0&1&2&3&4\\ \hline
        \text{total:}&1&10&20&15&4\\ \hline
        0:&1&\sbd&\sbd&\sbd&\sbd\\
        1:&\sbd&6&8&3&\bd\\
        2:&\sbd&4&12&12&4\\
        3:&\sbd&\sbd&\sbd&\sbd&\sbd\\
}
\hspace{0.01\linewidth}%
%%%%%%%%%%%%%%%%%%%%%
\end{adjustwidth}
\caption{$C = \Proj \mathbb F_3[x_0..x_5]/(x_3^2-x_2x_4,x_2x_3-x_1x_4,x_1x_3-x_0x_4,x_2^2-x_0x_4,x_1x_2-x_0x_3,x_1^2-x_0x_2,x_0^2x_4-x_0x_1x_4-x_1x_4^2+x_2x_4^2-x_4^3+x_3x_5^2,x_0^2x_3-x_0x_1x_4-x_0x_4^2+x_2x_4^2-x_3x_4^2-x_4^3+x_2x_5^2+x_3x_5^2,x_0^2x_2-x_0x_1x_4-x_0x_3x_4-x_3x_4^2-x_4^3+x_1x_5^2+x_2x_5^2+x_3x_5^2,x_0^2x_1-x_0x_1x_4-x_0x_2x_4-x_1x_4^2-x_3x_4^2-x_4^3+x_0x_5^2+x_1x_5^2+x_2x_5^2+x_3x_5^2)$}
\label{fig:betti-genus-4-semigroup-2-9}
\end{figure}

  %% \section{Betti tables of weighted embeddings with $g = 5$}
  %% \input{figs/fig-genus-5-betti-tables.tex}
  % Examples that get very ample too soon,
  % then strictly ample, then very ample again
  % {5,6,7,9} is va at 10,11
  % {5,6,7,8} is va at 8,11
  % {5,6,8,9} is va at 9,10,11
  % {4,7,9,10} is va at 10,11
  % {4,6,7}    is va at 7,8,11 !!
\end{appendix}

\bibliography{bibliography}

\end{document}